\definecolor{qqqqff}{rgb}{0.,0.,1.}
\definecolor{cqcqcq}{rgb}{0.7529411764705882,0.7529411764705882,0.7529411764705882}
\definecolor{ttqqqq}{rgb}{0.2,0.,0.}
\definecolor{qqqqff}{rgb}{0.,0.,1.}
\definecolor{xdxdff}{rgb}{0.49019607843137253,0.49019607843137253,1.}
\definecolor{zzttqq}{rgb}{0.6,0.2,0.}
\definecolor{cqcqcq}{rgb}{0.7529411764705882,0.7529411764705882,0.7529411764705882}
\definecolor{yqyqyq}{rgb}{0.5019607843137255,0.5019607843137255,0.5019607843137255}
\definecolor{uuuuuu}{rgb}{0.26666666666666666,0.26666666666666666,0.26666666666666666}
\definecolor{xdxdff}{rgb}{0.49019607843137253,0.49019607843137253,1.}
\definecolor{qqqqff}{rgb}{0.,0.,1.}
\newcommand{\trop}{\mathrm{trop}}
\newcommand{\NN}{\mathbb{N}}
\newcommand{\ZZ}{\mathbb{Z}}
\newcommand{\RR}{\mathbb{R}}
\newcommand{\CC}{\mathbb{C}}
\newcommand{\TT}{\mathbb{T}}
\renewcommand{\P}{\mathcal{P}}
\renewcommand{\L}{\mathcal{L}}
\newcommand{\N}{\mathcal{N}}
\newcommand{\M}{\mathcal{M}}
\newcommand{\T}{\mathcal{T}}
\newcommand{\Q}{\mathcal{Q}}
\renewcommand{\S}{\mathcal{S}}
\newcommand{\F}{\mathcal{F}}
\newcommand{\dd}{ \mathrm{d} }
\newcommand{\im}{\mathfrak{Im}}
\newtheorem{theo}{Theorem}[section]
\newtheorem*{theom}{Theorem}
\newtheorem{prop}[theo]{Proposition}
\newtheorem{coro}[theo]{Corollary}
\newtheorem{lem}[theo]{Lemma}
\theoremstyle{definition}
\newtheorem{defi}[theo]{Definition}
\theoremstyle{remark}
\newtheorem{remark}[theo]{Remark}
\newenvironment{rem}[1]{
    \begin{remark}#1}{
    \xqed{\blacklozenge}\end{remark}
}
\theoremstyle{remark}
\newtheorem{example}[theo]{Example}
\newenvironment{expl}[1]{
    \begin{example}#1}{
    \xqed{\lozenge}\end{example}
}
\newcommand{\xqed}[1]{
    \leavevmode\unskip\penalty9999 \hbox{}\nobreak\hfill
    \quad\hbox{\ensuremath{#1}}}
\keywords{Enumerative geometry, tropical refined invariants, abelian surfaces\\
\texttt{Thomas Blomme, Univerit\'e de Gen\`eve} \\
Email: \texttt{thomas.blomme@unige.ch} \\
 \textit{Data Statement:} I do not have any data to point.}
\begin{document}
 
 
\title{Tropical curves in abelian surfaces I:\\ enumeration of curves passing through points}
\author{Thomas Blomme}

\begin{abstract}
This paper is the first part in a series of three papers devoted to the study of enumerative invariants of abelian surfaces through the tropical approach. In this paper, we consider the enumeration of genus $g$ curves of fixed degree passing through $g$ points. We compute the multiplicity provided by a correspondence theorem due to T. Nishinou and show that it is possible to refine this multiplicity in the style of Block-G\"ottsche to get tropical refined invariants.
\end{abstract}

\maketitle

\tableofcontents

\section{Introduction}

\subsection{Overview}

This paper is the first of a series of three papers whose purpose is to study the enumerative invariants of abelian surfaces. The first paper is dedicated to presenting the setting of tropical abelian surfaces and tropical curves, and studying the enumeration of genus $g$ curves passing through $g$ points. The second paper focuses on the enumeration of genus $g$ curves belonging to a fixed linear system that pass through $g-2$ points. The third paper provides a pearl diagram algorithm that enables a concrete computation of the invariants introduced in the first two papers.

\paragraph{Abelian surfaces and tropical tori.} We consider tropical tori. These are quotients of a real vector space containing some lattice, by some other lattice. Namely, we choose the real vector space to be $N_\RR=N\otimes\RR$ for some lattice $N$, and the other lattice is denoted by $\Lambda\subset N_\RR$. The inclusion $\Lambda\hookrightarrow N_\RR$ is denoted by $S$. The case of surfaces corresponds to the case where both lattices are of rank $2$. This construction is analog to the construction of complex tori as a quotient of the algebraic torus $(\CC^*)^n$ by a sublattice of maximal rank. Notice that complex tori are usually presented as the quotient of a complex vector space by a full-rank additive lattice.

A complex torus (resp. tropical torus) is an abelian variety if it is given the choice of a polarization, which is a line bundle with positive integer Chern class. In the complex case, this condition is equivalent to the Riemann bilinear relation from \cite{griffiths2014principles}, which gives a criteria for the existence of a positive integer $(1,1)$-class. In the tropical setting, this condition translates to the existence of an element $C\in\Lambda\otimes N$ such that the product $CS^T\in (N\otimes N)_\RR$ is symmetric. Once such a choice has been made, it is possible to study curves in the class $C$.

\paragraph{Enumerative geometry and Gromov-Witten invariants.} Given an integer class $C$ that contains curves, it is natural to try to count curves in the class that satisfy certain given conditions. Such conditions are usually fixing the genus, and some geometric conditions such as passing through points. For instance, in the projective plane $\CC P^2$, look at degree $d$ and genus $g$ curves  that pass through $3d-1+g$ points. To be able to consider such enumerative problems, one has to know the dimension of the moduli space of curves so that we impose the right number of constraints. In the case of abelian surfaces, the dimension of the moduli space of genus $g$ curves in a given class $C$ is $g$, meaning we have to impose $g$ constraints to expect a finite number of curves.

For curves in abelian surfaces, there are two main enumerative problem that we can consider:
\begin{itemize}[label=$\circ$]
\item How many genus $g$ curves in the class $C$ pass through $g$ points ?
\item How many genus $g$ curves in a fixed linear system pass through $g-2$ points ?
\end{itemize}
In the second case, fixing the linear system imposes a codimension $2$ condition because the group of line bundles on an abelian surface has dimension $2$. In this paper, we are mainly interested to the first enumerative problem. The second enumerative problem is adressed in the second paper of the series. 

Sometimes, and it is here the case, the number of solutions to an enumerative problem does not depend on the choice of the constraints as long as it is generic. The result is thus called an \textit{invariant}. In fact, as proved in \cite{bryan1999generating}, these enumerative invariants coincide with Gromov-Witten invariants. The latter are defined as integrals of cohomology classes in the moduli space of curves inside the ambiant variety over some virtual fundamental class in the moduli space of curves. By integrating different cohomology classes, we see that the enumerative invariants are part of a much bigger family of invariants, not all of them having an enumerative interpretation. For instance, one can integrate $\lambda$-class or $\psi$-class. The invariants with integration of a $\lambda$-class might be related to the refined invariants introduced in this paper, as it is the case for toric surfaces \cite{bousseau2019tropical}.

Despite the enumerative counts being invariant, their computation often remains a challenge. The enumerative geometry of curves inside abelian surfaces has already been studied by J. Bryan and N. Leung \cite{bryan1999generating}, and J. Bryan, B. Oberdieck, R. Pandharipande and Q. Yin \cite{bryan2018curve}, although in these cases, the authors restricted to the case of primitive classes, which are classes $C$ that cannot be expressed as a multiple of a smaller class.

\paragraph{Tropical geometry and Correspondence Theorem.} As the numbers we are looking for do not depend on the choice of the constraints, it is now a standard approach to try to compute them close to the so-called \textit{tropical limit}. This was first done in the case of Severi degrees of toric varieties by G. Mikhalkin in \cite{mikhalkin2005enumerative}. Close to the tropical limit, the curves solution to the enumerative problems break into several components whose structure is encoded in objects called \textit{tropical curves}. Tropical curves are finite graphs whose edges have integer slope and satisfy a balancing condition. Then, a suitable \textit{correspondence theorem} allows one to recover the solutions to the enumerative problem close to the tropical limit from the tropical curves. The computation of the classical invariants has thus been reduced to a tropical enumerative problem, and the result is obtained by counting the tropical solutions with suitable multiplicities.

For the case of abelian surfaces, T. Nishinou gives a correspondence theorem in \cite{nishinou2020realization} for genus $g$ curves passing through $g$ points inside an abelian surface. One of the main results of \cite{nishinou2020realization} consists in providing the multiplicity $m_\Gamma^\CC$ with which to count the tropical solutions, so that their count gives the number of complex genus $g$ curves in a fixed class $C$ passing through a generic point configuration inside a complex abelian variety. The multiplicity provided in \cite{nishinou2020realization} lacks an easy expression such as the one from the toric setting in \cite{mikhalkin2005enumerative}, and the tropical enumerative problem has yet to be resolved.

In the setting of toric surfaces, the multiplicity provided by the correspondence theorem from \cite{mikhalkin2005enumerative} expresses as a product over the vertices of a tropical curve. In different settings, such as for instance in dimension bigger than $3$, the generalizations of the  correspondence theorem \cite{nishinou2006toric} \cite{mandel2020descendant} express the tropical multiplicities as the determinant of some map naturally associated to each tropical curve. In the case of abelian surfaces, the correspondence theorem from \cite{nishinou2020realization} does not express the multiplicity as a determinant because unlike all the cases previously considered, all the tropical curves are \textit{superabundant}, meaning the dimension of their deformation space does not match the expected dimension: they vary in dimension $g$ while the expected dimension is $g-1$. The multiplicity is thus defined a little bit differently, and though it can still be expressed as a determinant, this changes its computation. See section \ref{section correspondence theorem} for more details.

\paragraph{Refined invariants.} In the case of toric surfaces, the multiplicity provided by the correspondence theorem from \cite{mikhalkin2005enumerative} is expressed as a product over the vertices of the tropical curve. In \cite{block2016refined}, F. Block and L. G\"ottsche proposed to replace the vertex multiplicities by their quantum analog: an integer $a$ is replaced by $[a]_q=\frac{q^{a/2}-q^{-a/2}}{q^{1/2}-q^{-1/2}}$. This new multiplicity is a symmetric Laurent polynomial in a new variable $q$. Surprisingly, in many situations, the count of tropical curves solution to a suitable enumerative problem using this new multilicity happens not to depend either on the choice of the connstraints, as proven by I. Itenberg and G. Mikhalkin \cite{itenberg2013block} in the case of tropical curves in toric surfaces. This may not be expected because these multiplicities are not provided by a correspondence that guarantees that the count of tropical curves solutions to an enumerative problem leads to an invariant. The meaning of these refined invariants in classical geometry is thus a natural question. Such refined invariants have since been extended to many other situations. See for instance \cite{gottsche2019refined}, \cite{shustin2018refined}, \cite{blechman2019refined}, \cite{schroeter2018refined}, \cite{blomme2021refinedtrop}. The setting of floor diagrams has also been adapted to enable the computation of some of these refined invariants in \cite{block2016fock}.

These refined invariants were first defined purely tropically, and they possess a curious relationship to various other invariants in classical geometry. One of their conjectured interpretation is that they should coincide, in a certain sense, with the refinement of the Euler characteristic of some relative Hilbert scheme by its Hirzebruch genus \cite{gottsche2014refined}. Yet, other interpretations have since been found. In some situations, they have been proven to coincide with real refined classical invariants, obtained by counting real oriented rational curves according to the value of some quantum index \cite{mikhalkin2017quantum}, \cite{blomme2021refinedreal}. In other situations, P. Bousseau proved that through the change of variable $q=e^{iu}$, their generating series is connected to the generating series of Gromov-Witten invariants with the integration of a $\lambda$-class \cite{bousseau2019tropical}. The relation between these different approaches remains unclear. Yet, one common point in the last approaches is that they both need to cancel the denominators of the vertex multiplicities, suggesting that the refined multiplicity should in fact just be $\prod (q^{m_V/2}-q^{-m_V/2})$.

\subsection{Results}

The present paper provides new results in several directions. Shortly, we give a concrete formula to compute the complex multiplicity of a parametrized tropical curve provided by Nishinou's correspondence theorem \cite{nishinou2020realization}, we extend the setting of refined invariants to the case of tropical abelian surfaces. Concrete computations are enabled by a \textit{pearl diagram} algorithm presented in the third paper.

	\subsubsection{Multiplicity of the tropical curves.}

We consider a genus $g$ trivalent parametrized tropical curve $h:\Gamma\to\TT A$ (see Section \ref{section tropical curves} for definitions) passing through a generic configuration of $g$ points, where $\TT A=N_\RR/\Lambda$ is a tropical torus. We define the gcd of the curve $\delta_\Gamma$ as the gcd of the weights of the edges. See section \ref{section tropical curves} for more details. The following theorem gives a concrete formula to compute the complex multiplicity provided by Nishinou's correspondence theorem \cite{nishinou2020realization}.

\begin{theom}\ref{theorem multiplicity product}
The multiplicity $m_\Gamma^\CC$ provided by Nishinou's correspondence theorem splits as follows:
$$m_\Gamma^\CC=\delta_\Gamma\prod_{V\in V(\Gamma)}m_V,$$
where the vertex multiplicity $m_V=|\det(a_V,b_V)|$ is the determinant of two out of the three outgoing slopes, and $\delta_\Gamma$ is the gcd of the tropical curve $\Gamma$.
\end{theom}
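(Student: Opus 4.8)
The plan is to unwind Nishinou's definition of $m_\Gamma^\CC$ as the absolute value of the determinant of a linear map naturally attached to the superabundant tropical curve, and then to block-diagonalize that map so that the global determinant factors into the local vertex contributions $m_V$ together with the global arithmetic factor $\delta_\Gamma$. Concretely, I would first recall (from Section \ref{section correspondence theorem}) the precise linear-algebraic object whose determinant computes $m_\Gamma^\CC$: because every tropical curve here is superabundant by exactly one dimension, the multiplicity is not simply a product of vertex determinants as in \cite{mikhalkin2005enumerative}, but is defined via a map that simultaneously encodes the point conditions, the balancing/closing-up conditions around each vertex, and the extra relation coming from the periods of the curve in the torus $\TT A = N_\RR/\Lambda$.

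The key steps, in order, are as follows. First, I would fix a spanning tree $T$ of the trivalent graph $\Gamma$, so that the $g$ independent cycles are indexed by the $g$ edges not in $T$; this is the standard device that turns the deformation/period data into coordinates. Second, I would write the map defining $m_\Gamma^\CC$ in block form with respect to the decomposition of edges into tree-edges and non-tree-edges, exhibiting the point-constraint rows and the period-constraint rows separately. Third, I would perform row and column operations that localize the contribution of each trivalent vertex $V$: around a trivalent vertex with outgoing primitive slopes (weighted) $u_1,u_2,u_3$ satisfying the balancing condition $u_1+u_2+u_3=0$, any two of them span a sublattice of index $m_V=|\det(a_V,b_V)|$, and this index is exactly what a local Smith-normal-form computation extracts. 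After clearing the off-diagonal blocks, the determinant of the localized map is the product $\prod_{V} m_V$ of these vertex indices.

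The remaining factor $\delta_\Gamma$ is where the superabundance and the abelian (rather than toric) geometry genuinely enter, and this is the step I expect to be the main obstacle. In the toric case the analogous map has no extra period block and the determinant is just $\prod_V m_V$; here the single superabundant direction contributes one additional row/column whose evaluation, after the vertex-localization above, reduces to computing the gcd of the edge weights. So the heart of the argument is to show that the cofactor associated to this extra period constraint equals precisely $\delta_\Gamma$ rather than $1$ or some other vertex-dependent quantity. I would establish this by analyzing how the closing-up condition of each cycle interacts with the weights: along any cycle the lattice generated by the weighted edge directions has covolume governed by the weights, and taking the gcd over all edges isolates the common divisor $\delta_\Gamma$. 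The subtlety is to verify that this gcd factor emerges cleanly and does \emph{not} double-count the vertex indices already extracted, which I would check by a careful bookkeeping of the Smith normal form of the full matrix, treating the primitive case $\delta_\Gamma = 1$ first and then reducing the general case to it by pulling out the common factor from every edge slope simultaneously.
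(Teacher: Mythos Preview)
Your proposal misidentifies the linear-algebraic object underlying Nishinou's multiplicity, and this leads to a wrong picture of where the factor $\delta_\Gamma$ comes from.

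The map $\Theta$ from Section~\ref{section correspondence theorem} goes from $\bigoplus_{v\in V(\Gamma')}N$ (rank $6g-4$) to $\bigoplus_{e\in E(\Gamma')}N/N_e\oplus\bigoplus_1^g N$ (rank $6g-3$); it records only edge-difference data and the positions of the $g$ marked points. There are \emph{no} period rows and no ``extra row for superabundance'': the superabundance manifests instead as the ranks differing by one, so there is no determinant at all. The quantity $|\ker\Theta_{\CC^*}|$ is the order of the torsion of $\mathrm{coker}\,\Theta$, which for a rank-deficiency-one map equals the gcd of the maximal minors of $\Theta$. Your plan to ``block-diagonalize and read off a determinant'' therefore does not apply as stated, and the step you flag as the main obstacle --- extracting $\delta_\Gamma$ as the cofactor of a single extra period constraint --- is based on a picture of $\Theta$ that does not match the actual definition.

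The paper's argument is genuinely different in mechanism. One first observes that the image of $\Theta$ lies in the kernel of the linear form $\varphi=\sum_e\frac{w_e}{\delta_\Gamma}\phi_e$, which forces the minors obtained by deleting a marked-point row to vanish. Each remaining maximal minor corresponds to deleting an edge row $e_0$; using that $\Gamma\backslash h^{-1}(\P)$ is a tree (Proposition~\ref{proposition complement tree}), one develops that minor recursively --- first along the identity blocks at marked points, then by pruning leaves --- and finds it equals $\frac{w_{e_0}}{\prod_e w_e}\prod_V m_V$. The factor $\delta_\Gamma$ then enters not as a cofactor of any single row, but as the gcd of the numerators $w_{e_0}$ as $e_0$ ranges over \emph{all} edges of $\Gamma'$; Bezout gives $|\ker\Theta_{\CC^*}|=\frac{\delta_\Gamma}{\prod_e w_e}\prod_V m_V$, and multiplying by $\prod_e w_e$ yields the formula. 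Your reduction of the general case to the primitive case by dividing out $\delta_\Gamma$ is the one step that does match the paper, but it only works once the minor computation above is in hand.
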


The appearance of the gcd of the tropical curve may come as a surprise. However, it can be expected by considering an argument of homogeneity in the definition given of \cite{nishinou2020realization}.

The presence of the gcd of the tropical curve means that the complex multiplicity involves some global information on the curve. This does not prevent from computing the multiplicity of a unique tropical curve but might bring complications when trying to count the tropical curves passing through $g$ points because one has to keep track of this gcd.

The expression as a product over the vertices of the tropical curve suggests that a refinement as proposed by F. Block and L. G\"ottsche \cite{block2016refined} should also provide tropical refined invariants in this situation. We thus introduce the refined multiplicity of a tropical curve to be
$$m_\Gamma^q=\prod_{V\in V(\Gamma)}[m_V]_q=\prod_V \frac{q^{m_V/2}-q^{-m_V/2}}{q^{1/2}-q^{-1/2}}\in\ZZ[q^{\pm 1/2}].$$

	\subsubsection{Tropical refined invariants}

We give an invariance statement for the count of tropical curves of genus $g$ in a class $C$ passing through a configuration $\P$ of $g$ points using the previous multiplicities. In fact, one has an even more refined invariance by only considering the curves having a fixed gcd. This is due to the fact that through the deformations induced by the moving of the point configuration $\P$, the gcd of the tropical curves is preserved. In other words, all the tropical curves passing through the some point configuration split in different groups according to the value of their gcd, and we have invariance in each group. Thus, it is possible to count tropical curves with a fixed gcd and partially get rid of the factor $\delta_\Gamma $ in the complex multiplicity. We thus introduce the following counts of tropical curves:
\begin{align*}
N_{g,C,k}(\TT A,\P) & = \sum_{\substack{h(\Gamma)\supset\P \\ \delta_\Gamma =k}} m_\Gamma \in\NN, \\
BG_{g,C,k}(\TT A,\P) & = \sum_{\substack{h(\Gamma)\supset\P \\ \delta_\Gamma =k}} m^q_\Gamma \in \ZZ[q^{\pm 1/2}],\\
M_{g,C}(\TT A,\P) &  = \sum_{h(\Gamma)\supset\P} m_\Gamma = \sum_{k|\delta(C)} N_{g,C,k}(\TT A,\P) \in\NN , \\
N_{g,C}(\TT A,\P) &  = \sum_{h(\Gamma)\supset\P} \delta_\Gamma m_\Gamma = \sum_{k|\delta(C)}k N_{g,C,k}(\TT A,\P) \in\NN , \\
R_{g,C}(\TT A,\P) & = \sum_{h(\Gamma)\supset\P} \delta_\Gamma m^q_\Gamma = \sum_{k|\delta(C)}k BG_{g,C,k}(\TT A,\P) \in \ZZ[q^{\pm 1/2}] , \\
BG_{g,C}(\TT A,\P) & = \sum_{h(\Gamma)\supset\P} m^q_\Gamma = \sum_{k|\delta(C)} BG_{g,C,k}(\TT A,\P) \in \ZZ[q^{\pm 1/2}] . \\
\end{align*}

All the counts are over irreducible tropical curves $h:\Gamma\to\TT A$ of genus $g$ in the class $C$ that pass through the point configuration $\P$. The first two counts are over the curves that have a fixed gcd $k$. The other counts are over all the tropical curves (without gcd constraints) and can thus be expressed as a linear combination of the first two ones. The first count is obtained by specializing the second one at $q=1$. Thus, an invariance for the second count yields an invariance for all the others. We have the following invariance statements. First, we have an invariance regarding the point configuration $\P$.

\begin{theom}\ref{theorem point invariance}
The refined count $BG_{g,C,k}(\TT A,\P)$ does not depend on the choice of $\mathcal{P}$ as long as it is generic.
\end{theom}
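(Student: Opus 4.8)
The plan is to exhibit $BG_{g,C,k}(\TT A,\P)$ as the degree of an evaluation map weighted by the refined multiplicities, and to prove that this degree does not depend on $\P$ by verifying a balancing condition across the walls of the source. Concretely, let $\M$ be the polyhedral complex of trivalent parametrized genus $g$ tropical curves $h:\Gamma\to\TT A$ in the class $C$, each carrying $g$ marked points on its edges, stratified by combinatorial type. By the deformation count of Section \ref{section tropical curves}, a top stratum has dimension $2g$, namely the $g$ moduli of the curve together with the $g$ positions of the marked points, and the evaluation map $\mathrm{ev}:\M\to(\TT A)^g$ recording the images of the marked points is integral affine on each stratum and maps between complexes of equal dimension $2g$. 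Despite the superabundancy of the curves, a generic $\P$ has finitely many preimages, each interior to a top cell on which $\mathrm{ev}$ is a local isomorphism; there the solution set, the multiplicities $m_\Gamma^q$ and the integers $\delta_\Gamma$ are locally constant, so $BG_{g,C,k}$ is constant on each connected component of the complement of the codimension $\geq 1$ image of the non-maximal strata. As $(\TT A)^g$ is connected, it suffices to show the count is unchanged when $\P$ crosses a wall, that is a codimension $1$ stratum $\tau$ whose image is again of codimension $1$.

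A generic curve of such a wall $\tau$ has a single degeneracy: either a marked point sits at a vertex, or exactly one bounded edge has contracted, producing one four-valent vertex $V$ while the rest of $\Gamma$ stays trivalent. The first case is harmless, since $m_\Gamma^q=\prod_V[m_V]_q$ ignores the position of the marked points and the two adjacent chambers carry the same curve with the same multiplicity and gcd. For the second case, write $u_1,u_2,u_3,u_4$ for the weighted outgoing slopes at $V$, with $u_1+u_2+u_3+u_4=0$. I would first record that all resolutions share one gcd: if $d$ is the gcd of the weights of the edges of the wall curve, then $d$ divides each $u_i$ coordinatewise, hence divides the lattice length of any $u_i+u_j$, which is the weight of the new edge a resolution introduces; the resolved curve therefore has gcd $d$ independently of the channel, equal to the gcd of the wall curve. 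Thus every resolution of $V$ has the same gcd $\delta_\Gamma=d$, the splitting of the count according to $\delta_\Gamma$ is compatible with wall-crossing, and one may argue inside a fixed value $k$.

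The remaining and decisive point, which I expect to be the main obstacle, is the balancing of $\mathrm{ev}_*$ at a four-valent wall. Contracting the vanishing edge, $\tau$ lies in the closure of the (at most three) top cells obtained by splitting $V$ into two trivalent vertices joined by a new bounded edge, the channels realised on the two sides being determined by the sign of the new length. Only $V$ changes, so each refined multiplicity is a common factor times $[\,|\det(u_i,u_j)|\,]_q\,[\,|\det(u_k,u_l)|\,]_q$. The geometry of the constraints puts one channel — say $(13)(24)$ — alone on one side and the other two on the opposite side, whence invariance reduces, up to relabelling, to the single identity
$$[\,|\det(u_1,u_2)|\,]_q\,[\,|\det(u_3,u_4)|\,]_q+[\,|\det(u_1,u_4)|\,]_q\,[\,|\det(u_2,u_3)|\,]_q=[\,|\det(u_1,u_3)|\,]_q\,[\,|\det(u_2,u_4)|\,]_q,$$
the quantum relation underlying Block--G\"ottsche invariance \cite{block2016refined}, \cite{itenberg2013block}, whose local model coincides with the toric one. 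The work lies in certifying that this alone governs the wall-crossing: one must check that the superabundant deformation space still yields exactly the expected adjacent top cells, and that no exotic walls intervene, in particular that contracting an edge of a cycle of the genus $g$ graph is genus-preserving — so the three resolutions remain of genus $g$ — whereas genus-reducing contractions occur in codimension at least two and do not separate generic chambers. Granting this, $\mathrm{ev}_*\big(\sum_{\delta_\Gamma=k}m_\Gamma^q[\Gamma]\big)$ is a balanced weighting of the connected torus $(\TT A)^g$, hence constant, and its value $BG_{g,C,k}(\TT A,\P)$ is independent of the generic $\P$.
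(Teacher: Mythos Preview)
Your overall strategy---local constancy via wall-crossing---matches the paper's, but the execution differs. The paper does not analyse walls directly inside $\TT A$. Instead it uses the cutting/lifting procedure of Section~\ref{section lifting}: choosing the marked points as lifting set, each solution unfolds to a rational planar curve $\tilde h:\widetilde\Gamma\to N_\RR$ subject to point and moment constraints, and the wall-crossing is then imported wholesale from the planar refined invariance of \cite{itenberg2013block}. The only genuinely new case is when a marked point lying on an unbounded end of $\widetilde\Gamma$ reaches a vertex, which the paper handles separately. This buys the paper the entire planar wall analysis for free; your direct approach has to redo it in the superabundant setting.

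Two concrete gaps in your version. First, your wall classification is incomplete. Besides a single contracted edge producing one four-valent vertex, there is the codimension-one stratum where a cycle of $\Gamma$ flattens to a pair of parallel edges joining \emph{two} simultaneous four-valent vertices (wall $(b)$ in the paper's Figure~\ref{figure walls point deformation}); this is not a single edge contraction, and the one quantum Pl\"ucker identity you write does not cover it. Second, the marked-point-at-vertex wall is not as innocuous as you state: there are \emph{three} adjacent top cells, not two, one for each incident edge. They do all carry the same $m_\Gamma^q$, but what makes the balancing work is that exactly one of the three puts the marked point off its cycle $\lambda_q$, so the complement $\Gamma\setminus h^{-1}(\P)$ acquires a cycle; by Lemma~\ref{lemma deformation of a cycle} the curve is then deformable with all marked images fixed, $\mathrm{ev}$ has positive-dimensional fibres there, and that cell contributes nothing to the degree. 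Only with this observation do the remaining two cells match across the wall. Your gcd-preservation argument across a four-valent resolution is correct and is essentially how the paper (in its final remark) justifies refining by $\delta_\Gamma=k$.
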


The corresponding invariant is denoted by $BG_{g,C,k}(\TT A)$. Then, we have an invariance concerning the choice of the tropical torus $\TT A$.

\begin{theom}\ref{theorem torus invariant}
The refined invariant $BG_{g,C,k}(\TT A)$ does not depend on $\TT A$ as long as $\TT A$ contains curves in the class $C$ and is chosen generically among them.
\end{theom}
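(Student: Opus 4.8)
The plan is to exhibit a connected parameter space of tropical tori carrying the class $C$ and to prove that $BG_{g,C,k}$ is locally constant on it; connectedness together with local constancy then forces global constancy. First I would describe this parameter space. Once the lattice $N$ and the class $C$ are fixed, a tropical torus $\TT A=N_\RR/\Lambda$ is encoded by the embedding $S\colon\Lambda\hookrightarrow N_\RR$, and the requirement that $C$ be a polarization (equivalently, that $\TT A$ contain curves in the class $C$) is exactly that $CS^T\in(N\otimes N)_\RR$ be symmetric and positive. Symmetry is a system of linear equations in the entries of $S$, while positivity cuts out the preimage, under the linear map $S\mapsto CS^T$, of the convex cone of positive-definite symmetric forms. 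Hence the space $\mathcal{U}_C$ of admissible $S$ is convex, in particular connected and contractible. Passing to a generic member amounts to choosing $S$ outside a discriminant locus of codimension at least $1$, and the complement of that locus in the convex set $\mathcal{U}_C$ remains connected.

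Next I would establish local constancy away from the discriminant. Fix a generic point configuration $\P$. Within a fixed combinatorial type, a parametrized tropical curve $h\colon\Gamma\to\TT A$ through $\P$ is pinned down by an affine-linear system whose coefficients depend continuously on both $S$ and $\P$. For $S$ in a small neighbourhood of a generic value, the set of combinatorial types admitting a realization through $\P$ does not change, each realization deforms continuously, and, crucially, the refined multiplicity $m_\Gamma^q=\prod_V[m_V]_q$ and the gcd $\delta_\Gamma$ depend only on combinatorial data, namely the outgoing slopes $a_V,b_V$ and the edge weights, and not on the metric of the torus. Therefore every summand of $BG_{g,C,k}(\TT A,\P)$, and hence the whole sum, is constant as $S$ varies in that neighbourhood.

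It then remains to analyze a generic path in $\mathcal{U}_C$, which meets finitely many walls. A wall is reached precisely when the realization problem degenerates: two vertices collide into a four-valent vertex, or a curve ceases to meet $\P$ transversally. My plan is to reduce each such degeneration to the \emph{same} local exchange already treated in the proof of Theorem \ref{theorem point invariance} for a moving point configuration. Indeed, a variation of $\P$ and a variation of $S$ both act on the realization problem only through the affine-linear constraints attaching the curve to the points inside the torus, so the local models at a wall coincide. The refined balancing at a four-valent vertex that underlies point invariance then shows that the contributions from the two sides of the wall agree; and since the local modification preserves $\delta_\Gamma$, the refinement by the fixed value $k$ is respected wall by wall. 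Combining this with connectedness of $\mathcal{U}_C$ minus its discriminant yields independence of $\TT A$.

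The main obstacle I anticipate is the superabundancy flagged in the introduction: because every tropical curve here deforms in dimension $g$ rather than the expected $g-1$, the realization problem is not transverse, and the naive deformation argument for local constancy must instead be run inside the superabundant deformation space, with the multiplicity defined as in Section \ref{section correspondence theorem}. Concretely, the hard part will be to verify that Nishinou's corrected multiplicity (and its refinement) varies correctly across the extra deformation direction, and that the list of walls is genuinely exhausted by the vertex collisions and non-transversality above, rather than enlarged by superabundant phenomena specific to varying the torus. Once the wall-crossing model is shown to match that of the point-moving case, the invariance statement follows.
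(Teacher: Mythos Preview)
Your proposal misidentifies the walls. When you vary $S$ along a generic path while keeping (or choosing) the point configuration $\P_t$ generic in each $\TT A_t$, the paper observes that you \emph{avoid} the quadrivalent-vertex and marked-point-meets-vertex walls entirely: because $\P_t$ is generic for every $t$, every curve through $\P_t$ is simple, and a small change of $t$ deforms each irreducible simple curve inside its combinatorial type. So the local model you plan to import from Theorem~\ref{theorem point invariance} never arises here, and your reduction ``a variation of $\P$ and a variation of $S$ both act on the realization problem only through the affine-linear constraints'' is not the right picture.

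The genuine walls are of a completely different nature, and your plan does not address them. They occur at the non-generic matrices $S_{t_*}$ for which the lattice of realizable classes strictly contains $\ZZ C_0$: there the class $C$ admits decompositions $C=C_1+C_2$ with $C_i$ not proportional to $C$, and \emph{reducible} genus $g$ curves $\Gamma_1\sqcup\cdots\sqcup\Gamma_r$ through $\P_{t_*}$ with components in such classes can appear. As $t$ moves off $t_*$, the classes $C_i$ cease to be realizable, so these components cannot be deformed separately; instead a choice $\Q$ of intersection points between the $\Gamma_j$ (organized by a tree $\T$) gets smoothed, producing an \emph{irreducible} curve for each nearby $t$. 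The paper's proof is precisely the analysis of this smoothing: for each $\Q$ exactly two of the $2^{|\Q|}$ combinatorial resolutions occur, one for $t<t_*$ and one for $t>t_*$, and they carry the same refined multiplicity because each smoothed node contributes a pair of trivalent vertices of equal multiplicity on either side. This reducible-to-irreducible mechanism is absent from your outline and cannot be recovered from the point-moving wall-crossing; it is the heart of the argument. The superabundancy you flag is not the obstacle here.
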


The dependence of $BG_{g,C,k}(\TT A)$ in terms of $\TT A$ depends in fact on the lattice generated by realizable classes in the tropical torus. Choosing bases of $N$ and $\Lambda$, the tropical torus $N_\RR/\Lambda$ is defined by the $2\times 2$ real matrix of the inclusion $S:\Lambda\hookrightarrow N_\RR$, and the classes that are realizable are given by integer matrices $C$ such that $CS^T\in\S_2(\RR)$. If $C$ is fixed and $S$ chosen generically so that this condition is satisfied, there will be no other matrices satisfying the condition, and the multiples of $C$ will be the only classes realizable by curves in the tropical torus associated to $S$. If $S$ is not chosen generically, there may be classes $C_1$ and $C_2$ such that $C_1+C_2=C$.

Due to the fact that the dimension of the deformation space of genus $g$ tropical curves in any class is equal to $g$, among the genus $g$ curves passing through a point configuration $\P$, there are reducible curves having irreducible components of genera that add up to $g$, splitting the point constraints among the different components. When deforming the tropical torus $\TT A$, which means changing the matrix $S$, it is possible to deform the irreducible components whose class keeps being realizable. This is the case of the irreducible components whose class is proportional to $C$. However, if for a non-generic $S$ we have a decomposition of $C$ as a sum of classes that are not proportional to $C$, for instance $C=C_1+C_2$, and it is not possible to separately deform the components. In other words, when deforming $\TT A$, a family of irreducible curves might deform to a reducible curve. The reducible curve gives no contribution to $BG_{g,C}(\TT A)$, but its deformation contributes for nearby $\TT A$. See section \ref{section invariance surface} for more details. Thus, the invariant $BG_{g,C}(\TT A)$ is not the same for any $\TT A$.

All the invariants are defined purely tropically. The count $N_{g,C}$ is interesting because Nishinou's correspondence theorem \cite{nishinou2020realization} ensures that it matches the number of genus $g$ curves in the class $C$ passing through $g$ points for a complex abelian surface, number that we denote by $\N_{g,C}$. The meaning of the refined invariants remains open although one can probably adapt results from \cite{bousseau2019tropical} to show that they coincide with Gromov-Witten invariants with insertion of $\lambda$-classes. Moreover, due to the invariance for each class of curves with fixed gcd, we have many refined invariants since it is possible to take any linear combination of the first invariants. Maybe some other choices could have an interpretation in complex or real geometry.

\subsection{Plan of the paper}

The paper is organized as follows.
\begin{itemize}[label=-]
\item The second section defines tropical tori and a description of the families of complex tori that tropicalize to a given tropical torus. If one knows the description of a tropical torus as $\TT A=N_\RR/\Lambda$, that the degree is the data of a class $C\in\Lambda\otimes N$, and that a class $C$ is realizable if and only if $CS^T$ is symmetric, this section can be skipped in a first lecture.
\item In the third section, we give definitions for tropical curves inside tropical abelian surfaces, a way to relate them to tropical curves in the plane $N_\RR$, and we compute the dimension of their deformation space.
\item The fourth section is devoted to stating the invariance results of the paper: first we set the enumerative problems, then we recall Nishinou's correspondence theorem, and last we state the invariance results. All these results are proved in the fifth section.
\item Theorem \ref{theorem multiplicity product} relies on the computation of the multiplicity of a planar tropical curve and is recalled in Appendix.
\end{itemize}

\textit{Acknowledgments.} The author is grateful to Hannah Markwig for some helpful remarks and kindly agreeing to read the paper. Research is supported in part by the SNSF grant 204125.

\section{Tropical Abelian surfaces}

\subsection{Tropical tori}

We follow partially the notations from \cite{halle2017tropical} and use the following definition of a tropical torus. In all what follows, let $N$ be a rank $2$ lattice, $M=\hom(N,\ZZ)$ its dual lattice. For a lattice $N$, we denote by $N_\RR$ the real vector space $N\otimes\RR$, and similarly $N_{\CC^*}=N\otimes\CC^*$.

\begin{defi}
A tropical torus $\TT A$ is a quotient
$$\TT A=N_\RR/\Lambda,$$
where $\Lambda\subset N_\RR$ is a lattice.
\end{defi}

There are two lattices. The lattice $N$ gives the tropical structure, and the lattice $\Lambda$ gives the period of the tropical torus $\TT A$. The lattice $N$ lives in fact in the tangent bundle of $\TT A$, which is the trivial bundle with fiber $N_\RR$. It is possible to fix a basis of $N$ and define $\TT A$ just as a quotient $\RR^2/\Lambda$, but this definition does not handle a change of basis, while the above definition does. Moreover, it provides a more natural definition of the dual torus.

\medskip

From now on, let $\TT A=N_\RR/\Lambda$ be a tropical torus define by two lattices $N$ and $\Lambda$, with a map $S:\Lambda\to N_\RR$. Choosing bases of the lattices $N$ and $\Lambda$, the inclusion $S$ amounts to the choice of a $2\times 2$ matrix with real coefficients. This choice is defined up to the change of basis, \textit{i.e.} the action of $SL_2(\ZZ)$ by multiplication on the left and on the right.

\medskip

\paragraph{Homology groups} The universal cover of $\TT A$ is $N_\RR$, and one has thus a natural identification of $H_1(\TT A,\ZZ)$ with $\Lambda$.

\paragraph{1-forms and cycles} As the tangent bundle $T(\TT A)$ is the trivial bundle with fiber $N_\RR$, the cotangent bundle $T^*(\TT A)$ is also trivial with fiber $M_\RR$, the dual of $N_\RR$, that contains the dual lattice $M$. The sections of the cotangent bundles with values in $M$ consist in the $1$-forms that take integral values on the lattice $N$.

\medskip

Given a $1$-form, \textit{i.e.} an element of $H^0(\TT A,T^*(\TT A))$, we can integrate it on cycles of $H_1(\TT A,\ZZ)\simeq\Lambda$. We choose to restrict to the tropical $1$-forms, \textit{i.e.} the ones that belong to $H^0(\TT A,M)$. The integration of tropical $1$-forms along cycles
$$(\alpha,\gamma)\in M\times\Lambda\longmapsto\int_\gamma\alpha \in\RR,$$
corresponds exactly to the map
$$(m,\lambda)\in M\times\Lambda\longmapsto \langle m,S\lambda\rangle\in \RR,$$
given by the inclusion $S:\Lambda\hookrightarrow N_\RR$, and the evaluation pairing $\langle,\rangle:M_\RR\times N_\RR\to\RR$. In other words, let $(\gamma_1,\gamma_2)$ be a basis of $\Lambda$, and $(m_1,m_2)$ a basis of $M$, with dual basis $(e_1,e_2)$ of $N$. The elements $m_1,m_2$  are coordinate functions on $N_\RR$. The matrix of the inclusion $S:\Lambda\hookrightarrow N_\RR$ in the chosen basis has elements that correspond to the integral of the $1$-forms $m_1,m_2\in M\simeq H^0(\TT A,M)$ over the cycles $\gamma_1,\gamma_2$. This way, the lattice $\Lambda$ appears as the period of $\TT A$.

\subsection{Tropical homology and intersection form}

In this section, we compute the tropical homology groups of a tropical torus. For a precise definition of the tropical homology groups, see \cite{itenberg2019tropical}. In our case, the cosheafs $\F_p$ used to compute tropical homology groups are just constant coefficients, with $\F_1=N$ and $\F_2=\Lambda^2 N$, the tropical homology groups $H_{p,q}(\TT A)=H_q(\TT A,\F_p)$ of $\TT A$ are as follows:
$$\begin{matrix}
& & & \Lambda^2 N\otimes\Lambda^2 \Lambda & & \\
& & \Lambda^2 N\otimes\Lambda & & N\otimes\Lambda^2\Lambda & \\
H_{p,q}(\TT A)=& \Lambda^2 N  & & \Lambda\otimes N & & \Lambda^2\Lambda \\
& & N & & \Lambda & \\
&  & & \ZZ & & \\
\end{matrix}.$$
We only care about the middle group $H_{1,1}(\TT A)\simeq\Lambda\otimes N$. We now choose orientations on the lattices $N$, $M$ and $\Lambda$, in a compatible way: orientations of $\Lambda^2N$ and $\Lambda^2 M$ determine each other, and the orientation of $N_\RR$ restricts to the orientation of $\Lambda$. This defines an intersection product on $H_{1,1}(\TT A)$. This group can be seen as the standard homology group of $\TT A$ but with coefficients in $N$. The intersection number between two $N$-cycles is obtained by summing over the intersection points the intersection number between their coefficients. In other words,
$$(\lambda\otimes n)\cdot(\lambda'\otimes n')=\det(\lambda,\lambda ')\det(n,n').$$

\begin{rem}
In our case, the tropical homology groups can be seen as the homology group with value in the homology of a torus $N_{\RR/\ZZ}=N\otimes\RR/\ZZ$. Thus, they are the homology groups of some $N_{\RR/\ZZ}$-principal bundle over $N_\RR/\Lambda$. Topologically, this space is a $4$-dimensional torus, which is just the associated complex variety, as depicted in the next section.
\end{rem}

\subsection{Degeneration of complex family to a tropical tori}
\label{section degeneration}

\subsubsection{Abelian surfaces in multiplicative form.} We now briefly describe degenerations of a family of complex tori. A $2$-dimensional complex torus is the quotient of a $2$-dimensional complex vector space by some rank $4$ lattice. A complex torus thus has a representation of the form $\CC A=\CC^2/\Lambda_\mathrm{add}$, where $\Lambda_\mathrm{add}$ is the image of a matrix $\Omega\in\M_{2,4}(\CC)$. Up to a change of coordinate inside the lattice $\Lambda_\mathrm{add}$ and the vector space $\CC^2$, one can always assume that the matrix $\Omega$ is of the form
$$\Omega=\begin{pmatrix}
I_2 & Z \\
\end{pmatrix}.$$
Then, letting $N$ be the sublattice spanned by the first two columns, $\CC^2$ appears as the vector space $N_\CC=N\otimes\CC$. Then, taking the exponential map, one can also give the following multiplicative presentation of the $2$-dimensional torus:
$$\CC A=N_\CC/\Lambda_\mathrm{add} \longrightarrow N_{\CC^*}/\Lambda_\mathrm{mult},$$
where the map takes $n\otimes z\in N_\CC$ to $e^{2i\pi nz}\in N_{\CC^*}$, which is the exponential taken coordinate by coordinate. This is well-defined on $\CC A$ if one quotients by the image of the span of the last two column vectors under the exponential map.

\begin{rem}
The downfall of the multiplicative notation is that we implicitly fix a preferred sublattice of $\Lambda_\mathrm{add}$, and change of basis of the lattice, as well as the action of $GL_2(\CC)$ are not that clear anymore.
\end{rem}

\subsubsection{Line bundles on abelian surfaces} Up to translation, line bundles on abelian surfaces are classified by their Chern class $c_1(\L)\in H^2(\CC A,\ZZ)$. We care about the line bundles that have sections. For such line bundles, the Chern class lies in fact in the Hodge cohomology group $H^{1,1}(\CC A)$, and pairs positively with the K\"ahler form on $\CC A$. According to \cite{griffiths2014principles}, a integer skew-symmetric matrix $Q$ is the Chern class of a positive line bundle if and only if
$$\left\{ \begin{array}{l}
\Omega Q^{-1}\Omega^T=0,\\
-i\Omega Q^{-1}\overline{\Omega}^T>0.\\
\end{array}\right.$$
Given an integer skew-symmetric matrix, it is always possible to choose a basis of the lattice such that the matrix has the following form:
$$Q=\begin{pmatrix}
0 & -c \\
c^T & 0 \\
\end{pmatrix}.$$
In our previous notations, this matrix $Q$ can be seen as an application $N\oplus\Lambda\to M\oplus\Lambda^*$, and $c:\Lambda\to M$. In fact, one can even assume that the matrix $c$ is of the form $\begin{pmatrix}
\delta_1 & 0 \\
0 & \delta_2 \\
\end{pmatrix}$, where $\delta_1 | \delta_2$. As $Q^{-1}=\begin{pmatrix}
0 & c^{-1T} \\
-c^{-1} & 0 \\
\end{pmatrix}$, the Riemann bilinear condition then becomes
$$\left\{\begin{array}{l}
Zc^{-1} \text{ is symmetric,}\\
\im(-Zc^{-1}) \text{ is positive definite.}\\
\end{array}\right.$$

Notice that as $c^{-1}:M\to\Lambda$ and $Z:\Lambda\to N$, the product has sense and is a map $M\to N$ that can be symmetric as a bilinear form on $M$. In the complex setting, curves are section of a positive line bundle, and the homology class realized by a curve is Poincar\'e dual to the Chern class of the line bundle. The Riemann bilinear relation satisfied by the Chern classes of line bundles imposes a constraint on the classes $C\in H_2(\CC A,\ZZ)$ realized by complex curves.

\subsubsection{Deformations.} We now consider a deformation of the abelian surface, which means a deformation of the lattice, so that the Riemann bilinear relation keeps being satisfied. We keep the sublattice $N$ fixed, so that we only have a deformation of $Z$. The deformation is taken of the form $Z_t=\frac{1}{2i\pi}(A+S\log t)$, where $A$ is some complex matrix, and $S$ is some integer matrix. The Riemann bilinear relations become $Ac^{-1}\in\S_2(\CC)$ and $Sc^{-1}\in\S_2^{++}(\RR)$.
As $\log t$ goes to infinity when $t$ goes to infinity, one can forget about the positiveness condition for $A$ since the condition for $\frac{1}{2i\pi}(A+S\log t)$ is satisfied close to the limit. In the exponential notations, our family of tori $\CC A_t$ is of the form
$$\CC A_t=N_{\CC^*}/\langle e^At^S \rangle,$$
which is called a Mumford family. The vectors spanning the lattice are $\lambda_j=(e^{a_{ij}}t^{s_{ij}})_i$.

\begin{rem}
It is also possible to consider families of tori that vary in a more complicated manner, taking general formal series as coefficients on the multiplicative side for instance. Moreover, it could seem natural to first consider some Mumford family, \textit{i.e.} $\Omega_t=\begin{pmatrix}
I_2 & \frac{1}{2i\pi}(A+S\log t) \\
\end{pmatrix}$, and then find the Chern classes of positive line bundles that survive the deformation. Unfortunately, such classes are not necessarily of the antidiagonal form. This is due to the following fact. Unlike the real setting, any lagrangian sublattice of a lattice with a skew-symmetric form does not necessarily have a lagrangian complement. When assuming that $Q$ is of a given form, we pick a decomposition of the lattice as a sum of two lagrangians sublattice. The deformation is considered afterwards. When first fixing the family, some sublattice is fixed, and this one does not necessarily have a lagrangian complement. One could change the lattice, but the deformation would not be of Mumford type anymore.
\end{rem}

In the multiplicative notation, for complex tori, we have an exact sequence
$$0\to \ker\pi \to N_{\CC^*}/\Lambda_t\xrightarrow{\pi=|\bullet|} N_\RR/\log|\Lambda_t|\to 0.$$
This exact sequence expresses $\CC A_t$ as a torus fibration of fiber $N_{\RR/\ZZ}$, over the base $N_\RR/\log|\Lambda_t|$ which is also a torus. Moreover, it splits at the topological level. In fact, this decomposition just emphasizes the decomposition of the first homology group of the associated complex surface:
$$H_1(\CC A_t,\ZZ)=N\oplus \Lambda_t.$$
The first part corresponds to the homology of the torus fiber, and the second to the homology of the base of the fibration. Notice that we recover the tropical homology groups of $\TT A$.

We also have the corresponding decomposition for cohomology:
$$H^1(\CC A_t,\ZZ)=M\oplus \Lambda^*,$$
where $M=N^*$. It gives the following decomposition of the $H^2(\CC A_t,\ZZ)$:
$$H^2(\CC A_t,\ZZ)=\Lambda^2\Lambda^*\oplus M\otimes\Lambda^*\oplus\Lambda^2 M.$$

\begin{rem}
Notice that this decomposition does not match the Hodge decomposition of $\CC A_t$, but it matches the decomposition provided by tropical cohomology. In fact, the map $c$ is the tropical Chern class of the tropical line bundle on the tropical abelian surface $\TT A$ obtained as limit of the complex line bundle. See second paper for more details.
\end{rem}

Let $\Lambda$ be the image of $S$. The tropicalization of the family $\CC A_t$ is the tropical torus $\TT A=N_\RR/\Lambda$. In \cite{nishinou2020realization}, T. Nishinou constructs degeneration of the family of abelian surfaces to a union of toric surfaces glued along their toric boundary as follows: given a rational  polyhedral decomposition of $\TT A$, one constructs a periodic fan $\Sigma$ inside $N_\RR\times\RR$, by taking the cone over the periodization of the decomposition. This fan gives an almost toric variety, which fibrates over $\CC$. The quotient by the $N\simeq\ZZ^2$ action gives the degeneration. The author then gives a correspondence theorem for tropical curves inside $\TT A$, and families of classical curves inside $\CC A_t$.

\section{Tropical curves in abelian surfaces}
\label{section tropical curves}

\subsection{Parametrized tropical curves}

\subsubsection{Abstract tropical curves.} An \textit{abstract tropical curve} is a finite metric graph $\Gamma$ that may possess several edges of infinite length, called \textit{unbounded ends}, which all need to be adjacent to univalent vertices. The number of neighbors of a vertex is called its \textit{valency}. Its genus $g$ is equal to its first Betti number, \textit{i.e.} $b_1(\Gamma)=g$. The vertices have no genus. The set of edges is denoted by $E(\Gamma)$ and its set of vertices by $V(\Gamma)$. The curve is said to be trivalent if every vertex has three neighbors. The length of an edge $e$ is denoted by $l_e$. An isomorphism between two abstract tropical curves is an isometry. A \textit{marked abstract tropical curve} is an abstract tropical curve with the choice of $n$ points on it, labeled by $[\![1;n]\!]$. By refining the graph structure, we can assume that the marked points are vertices of the graph, which are generically bivalent.

\subsubsection{Parametrized tropical curves.} We now get to parametrized tropical curves. Let $\TT A=N_\RR/\Lambda$ be a tropical torus.

\begin{defi}
A parametrized tropical curve inside a tropical torus $\TT A=N_\RR/\Lambda$ (resp. $N_\RR)$ is a map $h:\Gamma\to \TT A$ (resp. $N_\RR$) from an abstract tropical curve $\Gamma$ such that
\begin{itemize}[label=-]
\item $h$ is affine with slope in $N$ on each edge of $\Gamma$. For an oriented edge $e$, its slope $u_e$ is of the form $w_eu'_e$ where $u'_e\in N$ is a primitive vector, and $w_e$ a positive integer called the \textit{weight} of the edge.
\item $h$ satisfies the balancing condition: for each vertex $V\in V(\Gamma)$,
$$\sum_{e\ni V}w_eu_e=0,$$
when each $e$ is oriented with $V$ as its source.
\end{itemize}
\end{defi}

\begin{rem}
For a parametrized tropical curve in $\TT A$, the curve $\Gamma$ has no unbounded ends, while it has several for a curve inside $N_\RR$.
\end{rem}

\begin{figure}
\begin{center}
\begin{tabular}{ccc}
\begin{tikzpicture}[line cap=round,line join=round,>=triangle 45,x=0.3cm,y=0.3cm]
\clip(0,0) rectangle (10,10);
\draw [line width=0.5pt] (0,0)-- (8,2);
\draw [line width=0.5pt] (0,0)-- (2,8);
\draw [line width=0.5pt] (8,2)-- (10,10);
\draw [line width=0.5pt] (2,8)-- (10,10);

\draw [line width=1.5pt] (4,1)-- (4,4);
\draw [line width=1.5pt] (1,4)-- (4,4);
\draw [line width=1.5pt] (4,4)-- (6,6);
\draw [line width=1.5pt] (6,6)-- (6,9);
\draw [line width=1.5pt] (6,6)-- (9,6);

\begin{scriptsize}

\end{scriptsize}
\end{tikzpicture}
&
\begin{tikzpicture}[line cap=round,line join=round,>=triangle 45,x=0.3cm,y=0.3cm]
\clip(0,0) rectangle (14,11);
\draw [line width=0.5pt] (0,0)-- (12,3);
\draw [line width=0.5pt] (0,0)-- (2,8);
\draw [line width=0.5pt] (2,8)-- (14,11);
\draw [line width=0.5pt] (12,3)-- (14,11);

\draw [line width=1.5pt] (3,2)-- (4,3);
\draw [line width=1.5pt] (4,3)-- (4,5);
\draw [line width=1.5pt] (4,3)-- (6,3);
\draw [line width=1.5pt] (6,3)-- (8,5);
\draw [line width=1.5pt] (0.5,2)-- (3,2);
\draw [line width=1.5pt] (3,0.75)-- (3,2);
\draw [line width=1.5pt] (4,5)-- (5,6);
\draw [line width=1.5pt] (1.25,5)-- (4,5);
\draw [line width=1.5pt] (5,6)-- (5,8.75);
\draw [line width=1.5pt] (5,6)-- (9,6);
\draw [line width=1.5pt] (9,6)-- (11,8);
\draw [line width=1.5pt] (11,8)-- (13.25,8);
\draw [line width=1.5pt] (11,8)-- (11,10.25);
\draw [line width=1.5pt] (8,5)-- (8,9.5);
\draw [line width=1.5pt] (8,5)-- (12.5,5);
\draw [line width=1.5pt] (6,3)-- (6,1.5);
\draw [line width=1.5pt] (9,6)-- (9,2.25);

\begin{scriptsize}

\end{scriptsize}
\end{tikzpicture}
&
\begin{tikzpicture}[line cap=round,line join=round,>=triangle 45,x=0.3cm,y=0.3cm]
\clip(0,0) rectangle (10,10);
\draw [line width=0.5pt] (2,4)-- (0,10);
\draw [line width=0.5pt] (2,4)-- (10,0);
\draw [line width=0.5pt] (0,10)-- (8,6);
\draw [line width=0.5pt] (10,0)-- (8,6);

\draw [line width=2pt] (1.33333333333,6)-- (4,6);
\draw [line width=1.5pt] (4,6)-- (8,2);
\draw [line width=1.5pt] (4,6)-- (5.333333333,7.33333333);
\draw [line width=1.5pt] (8,2)-- (7.333333333,1.33333333);
\draw [line width=2pt] (8,2)-- (9.3333333333,2);

\begin{scriptsize}
\draw (2,7) node {$2$};
\draw (8.5,2.5) node {$2$};
\end{scriptsize}
\end{tikzpicture}
\\
$(a)$ & $(b)$ & $(c)$\\
\end{tabular}

\caption{\label{figure example tropical curves}Three examples of tropical curves in tropical tori. The first and third are of genus $2$, and the second of genus $5$.}
\end{center}
\end{figure}
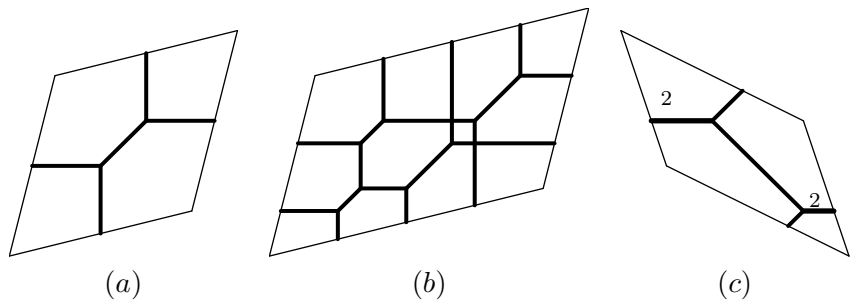

\begin{expl}
On Figure \ref{figure example tropical curves}, we can see three different examples of tropical curves in different tropical tori $\TT A$.
\end{expl}

\subsubsection{Degree of a tropical curve.} In particular, for each edge, the $1$-chain element with coefficients in $N$: $u_e e\in  C_1(\TT A,N)$ does not depend on the chosen orientation. Then, the curve realizes a $1$-chain
$$\sum_{e\in E(\Gamma)} u_e e\in C_1(\TT A,N).$$
The balancing condition ensures that this chain is in fact a cycle. Thus, it realizes a homology class $C=[\Gamma]\in H_1(\TT A,N)=\Lambda\otimes N=H_{1,1}(\TT A)$.

\begin{defi}
The class $C$ in $\Lambda\otimes N$ realized by a parametrized tropical curve is called the \textit{degree} of the curve.
\end{defi}

Given a curve $h:\Gamma\to \TT A$, its degree $C\in\Lambda\otimes N$ can be seen as a map $\Lambda^*\to N$, and thus as a matrix. The following proposition gives a description of its matrix.

\begin{prop}
Take a parallelogram that is a fundamental domain for $\TT A=N_\RR/\Lambda$. Let $(\lambda_1,\lambda_2)$ be the basis of $\Lambda$ formed by two edges of the parallelogram. Let $n_1$ be the sum of the slopes of the edges intersecting the side opposite to $\lambda_2$, oriented outward the fundamental domain, and let $n_2$ the sum for the edges intersecting the side opposite to $\lambda_1$. Then $C$ is the map that sends $\lambda_1^*$ to $n_1$ and $\lambda_2^*$ to $n_2$.
\end{prop}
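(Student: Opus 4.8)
The plan is to read off the class $C\in\Lambda\otimes N$ by pairing it against the two generators of $H_1(\TT A,\ZZ)=\Lambda$. Viewing $\Lambda\otimes N=\hom(\Lambda^*,N)$ via $(\lambda\otimes n)(\xi)=\xi(\lambda)\,n$ and writing $C=\lambda_1\otimes n_1+\lambda_2\otimes n_2$, we have $C(\lambda_1^*)=n_1$ and $C(\lambda_2^*)=n_2$, so it suffices to recover the coefficients $n_1,n_2\in N$ of $C$ in the basis $(\lambda_1,\lambda_2)$ and to match them with the stated sums of slopes. First I would use the intersection pairing on the oriented real $2$-torus $\TT A$, in the mixed form $H_1(\TT A,N)\times H_1(\TT A,\ZZ)\to H_0(\TT A,N)=N$; this is the evident variant of the self-intersection form recalled in the excerpt, and on decomposable classes it reads $(\lambda\otimes n)\cdot\lambda'=\det(\lambda,\lambda')\,n$, where $\det$ is computed in the chosen orientation of $\Lambda$, so that $\det(\lambda_1,\lambda_2)=1$.

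Applying this to $C=[\Gamma]$ and using $\det(\lambda_1,\lambda_2)=1$, $\det(\lambda_2,\lambda_2)=0$ yields $C\cdot[\lambda_2]=n_1$, and likewise $C\cdot[\lambda_1]=-n_2$. The second ingredient is to represent $[\lambda_2]$ and $[\lambda_1]$ by explicit loops: the image under $\pi:N_\RR\to\TT A$ of a side of the parallelogram parallel to $\lambda_2$ (resp. $\lambda_1$) is a simple closed curve of class $[\lambda_2]$ (resp. $[\lambda_1]$). Choosing as representatives the sides opposite to $\lambda_2$ and to $\lambda_1$, the number $C\cdot[\lambda_i]$ becomes a transverse count of the edges of $h(\Gamma)$ crossing that side, each crossing contributing its slope $u_e$ weighted by the local intersection sign. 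After a small perturbation of the cut (using genericity) we may assume all crossings are transverse and occur in the interiors of edges, so the count is well defined.

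The last step is the sign bookkeeping. In coordinates $(x,y)$ with $\lambda_1\leftrightarrow(1,0)$, $\lambda_2\leftrightarrow(0,1)$ and fundamental domain $[0,1]^2$, the side opposite to $\lambda_2$ has outward normal $+\lambda_1$, so an edge oriented outward has slope $u_e=(a,b)$ with $a>0$ and contributes $\mathrm{sign}\det(u_e,\lambda_2)\,u_e=u_e$; summing gives $n_1=\sum u_e$ over the edges crossing this side oriented outward. For the side opposite to $\lambda_1$ the outward normal is $+\lambda_2$, an outward edge has $u_e=(a,b)$ with $b>0$, and $\mathrm{sign}\det(u_e,\lambda_1)=-1$; this extra sign cancels the $-1$ in $C\cdot[\lambda_1]=-n_2$, again giving $n_2=\sum u_e$ over the outward crossings, as claimed. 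The main obstacle is exactly this orientation reconciliation: one must verify that the convention ``oriented outward the fundamental domain'' yields a positive sign on both sides, the apparent asymmetry coming from $C\cdot[\lambda_1]=-n_2$ being absorbed by the fact that the two outward normals sit on opposite sides of the fixed orientation of $\Lambda$.
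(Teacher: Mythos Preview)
Your argument is correct and follows the same idea as the paper's proof: characterise $C$ by its intersection pairing with the generators of the relevant homology and compute these intersections geometrically as signed crossings with the sides of the fundamental domain. The paper's version is extremely terse --- it simply says one checks that the class defined by $\lambda_i^*\mapsto n_i$ has the same intersection numbers with a basis of $H_{1,1}(\TT A)$ as $[\Gamma]$ --- whereas you unpack this via the equivalent $N$-valued pairing $H_1(\TT A,N)\times H_1(\TT A,\ZZ)\to N$ and carry out the orientation bookkeeping explicitly; the two formulations differ only in whether one intersects against $\lambda_i\otimes e_j$ to get integers or against $\lambda_i$ to get vectors in $N$.
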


\begin{proof}
Let $C'$ be the class defined in the proposition. We check that $C'$ has the same intersection numbers with a basis of $H_{1,1}(\TT A)$ as the curve $\Gamma$. Thus, they represent the same class.
\end{proof}


The classes realized by tropical curves are of a very specific kind: they satisfy the following property.

\begin{prop}
A class $C\in\Lambda\otimes N$ can be realized by a tropical curve if and only if $CS^T$ is symmetric.
\end{prop}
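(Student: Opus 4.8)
The plan is to prove the two implications separately, after recording a coordinate-free form of the symmetry condition. I view the degree both as $C\colon\Lambda^*\to N$ and, via $N=M^*$, as a map $C\colon M\to\Lambda$. Fix a basis $(\gamma_1,\gamma_2)$ of $\Lambda$ with dual basis $(\gamma_1^*,\gamma_2^*)$, and set $\lambda_i=S\gamma_i\in N_\RR$ and $n_i=C\gamma_i^*\in N$. A short computation in these bases shows that the antisymmetric part of $CS^T\in(N\otimes N)_\RR$ is proportional to the contraction $\lambda_1\wedge n_1+\lambda_2\wedge n_2\in\Lambda^2 N_\RR\cong\RR$, which is independent of the chosen basis. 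Hence $CS^T$ is symmetric if and only if $\lambda_1\wedge n_1+\lambda_2\wedge n_2=0$.

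For the necessity (realizable forces symmetry) I would argue by integrating tropical $1$-forms over the curve. Let $z=\sum_e u_e\,e\in C_1(\TT A,N)$ be the cycle of the curve. For $m\in M$ the ordinary $1$-cycle $z_m=\sum_e\langle m,u_e\rangle\,e$ represents the class $C(m)\in\Lambda=H_1(\TT A,\ZZ)$. Integrating the closed $1$-form $\dd m'$ (for $m'\in M$) over $z_m$ and using the period pairing of Section 2 gives, on one hand, $\int_{z_m}\dd m'=\langle m',S\,C(m)\rangle$. On the other hand, computing edge by edge and writing $u_e=w_eu'_e$ with $u'_e$ primitive and $l_e$ the length, the displacement of $e$ is $l_eu'_e$, which is parallel to $u_e$, so
$$\int_{z_m}\dd m'=\sum_{e}\langle m,u_e\rangle\,\langle m',l_eu'_e\rangle=\sum_e\frac{l_e}{w_e}\langle m,u_e\rangle\langle m',u_e\rangle.$$
The right-hand side is manifestly symmetric in $(m,m')$, hence so is the bilinear form $(m,m')\mapsto\langle m',S\,C(m)\rangle$ on $M$; this form is (up to transpose) the one represented by $CS^T$, so its symmetry is equivalent to $CS^T$ being symmetric.

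For the sufficiency I would first observe that the realizable classes form a subgroup of $\Lambda\otimes N$: a union (glued at a common point if connectedness is required) of two tropical curves realizes the sum of their classes, and the reflection $x\mapsto-x$ of $\TT A$ negates all slopes while preserving the balancing condition, so it realizes $-C$ from $C$. By the necessity part this subgroup is contained in $\{C:CS^T\text{ symmetric}\}$, so it suffices to realize a generating set of the latter lattice. Fixing one definite symmetric class $P$ and writing an arbitrary symmetric $C$ as $(C+kP)-kP$ with $k\gg0$ chosen so that $C+kP$ is definite, the group structure reduces the problem to realizing definite symmetric classes. A definite symmetric class is realized by its tropical theta divisor: the codimension-one skeleton of the Voronoi/Delaunay subdivision of $N_\RR$ attached to the form $CS^T$ and the lattice $\Lambda$ is $\Lambda$-periodic, descends to a balanced graph in $\TT A$, and has degree $C$.

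The main obstacle is this last construction. Checking that the Voronoi skeleton descends to a genuine parametrized tropical curve with integer slopes in $N$, and that its degree is exactly $C$, is where the symmetry hypothesis is really used: it is precisely the condition making the periodic subdivision consistent and the prescribed boundary fluxes $n_1,n_2$ closeable into a balanced graph. A more hands-on alternative, avoiding theta divisors, is to build the curve directly from bundles of parallel geodesic arcs realizing the fluxes $n_1$ and $n_2$ and to verify that $\lambda_1\wedge n_1+\lambda_2\wedge n_2=0$ is exactly the solvability condition for gluing them into a closed balanced network; I expect the bookkeeping of integrality and connectivity to be the delicate point either way.
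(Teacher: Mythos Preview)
Your necessity argument is correct and, in fact, cleaner than the paper's. The paper cuts the curve along the boundary of a fundamental domain, turns it into a planar tropical curve, and applies the tropical Menelaus theorem to the resulting unbounded ends; the pairing of ends across the boundary then yields exactly the identity $\det(\lambda_1,n_1)+\det(\lambda_2,n_2)=0$. Your computation obtains the same scalar identity intrinsically, by observing that $\int_{z_m}\dd m'=\sum_e\frac{l_e}{w_e}\langle m,u_e\rangle\langle m',u_e\rangle$ is visibly symmetric in $(m,m')$. This avoids the auxiliary cutting procedure at the cost of appealing to the period description of Section~2; either route is fine.

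Your sufficiency argument, however, is the wrong tool. The paper's proof here is a single sentence: ``it is easy to construct a genus $2$ curve in the class $C$,'' and this is meant literally. Take a theta graph with two vertices $A,B$ and three edges carrying slopes $n_1$, $n_2$, $-n_1-n_2$ (the columns of $C$ and their negative sum), and solve for the three edge displacements so that the two independent loops close up to $S\gamma_1$ and $S\gamma_2$. In matrix form this is $S=CM$ with $M$ a $2\times 2$ real matrix encoding the edge lengths; the system is overdetermined, and its consistency condition is exactly that $M=C^{-1}S$ be symmetric, i.e.\ $CS^T=SC^T$. (Positivity of the lengths may require replacing $n_i$ by $\pm n_i$ or a small $GL_2(\ZZ)$ change of basis of $\Lambda$, but this is bookkeeping.) See the tripod example just after the proposition. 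Your route via the group structure, reduction to definite classes, and Voronoi/theta-divisor skeletons would eventually work, but it imports substantial extra machinery to prove what is, in this setting, a three-line linear-algebra exercise; you correctly flagged it as the main obstacle, and the resolution is simply to drop it in favour of the direct genus $2$ construction.
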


\begin{proof}
Notice that $S^T:M\to\Lambda^*$ and $C$ is a matrix of a map $\Lambda^*\to N$, so that the product has a meaning. If the condition is satisfied, it is easy to construct a genus $2$ curve in the class $C$. Conversely, assume that there is some tropical curve inside the surface. Choose a fundamental domain for $\TT A=N_\RR/\Lambda$. Cut the tropical curve inside the fundamental domain and replace the cut edges by unbounded ends to get a true tropical curve inside $N_\RR$. By tropical Menelaus theorem \cite{mikhalkin2017quantum}, the sum of the moments of the unbounded ends is $0$. The unbounded ends come in pairs of ends glued when passing from the fundamental domain to $\TT A$. The sum of their moments is equal to $\det(\lambda_1,n_e)$. Adding the contribution for all edges yields the result. See next section for more details.
\end{proof}

\begin{rem}
Fixing the lattices $N$ and $\Lambda$, the tropical torus is given by a $2\times 2$ real matrix $S$, corresponding to the inclusion $\Lambda\to N_\RR$. A class $C\in \Lambda\otimes N$ is realizable if and only if $S^T C$ is symmetric. In other words, the matrix $S$ gives a linear form on $(\Lambda\otimes N)_\RR$, and the set of realizable classes is some subset of the intersection between an hyperplane and the lattice $\Lambda\otimes N$. Generically, there will be none. Alternatively, the realizability of a given class $C$ imposes a codimension $1$ constraint on $S$.
\end{rem}

\begin{expl}
Choosing basis of the lattices $\Lambda$ and $N$, a class $N$ is also represented by a matrix, a map $\Lambda^*\to N$. Getting back to the curves on Figure \ref{figure example tropical curves}, we take as a basis of $N$ the canonical basis of the plane where the figures are drawn, and as a basis of $\Lambda$ the sides of the parallelogram, first the almost vertical one, and then the almost horizontal one.  we have the following:
\begin{enumerate}[label=$(\alph*)$]
\item The matrix of the inclusion $\Lambda\to N_\RR$ is of the form $S=\begin{pmatrix}
a & b \\
b & a \\
\end{pmatrix}$, and the curve represents the class $C=\begin{pmatrix}
1 & 0 \\
0 & 1 \\
\end{pmatrix}$. We easily check that $CS^T$ is symmetric.
\item We have $S=\begin{pmatrix}
12 & 2 \\
3 & 8 \\
\end{pmatrix}$, and the curve represents the class $C=\begin{pmatrix}
2 & 0 \\
0 & 3 \\
\end{pmatrix}$, and we have again $CS^T$ symmetric.
\item We have $S=\begin{pmatrix}
4 & -1 \\
-2 & 3 \\
\end{pmatrix}$, and the curve represents the class $C=\begin{pmatrix}
2 & 1 \\
0 & 1 \\
\end{pmatrix}$, and we have again $CS^T$ symmetric.
\end{enumerate}
Concretely, the matrix $C$ is the matrix whose columns are the sums of the slopes intersecting the right (resp. top) side of the parallelogram respectively. Meanwhile, $S$ is the matrix whose columns are the coordinate of the bottom side and left side of the parallelogram in the basis of $N_\RR$.
\end{expl}

\begin{expl}
Consider the tripod with directions $(-\alpha_1-\alpha_2,-\beta_1-\beta_2)$, $(\alpha_1,\beta_1)$ and $(\alpha_2,\beta_2)$ and  with respective lengths $a$, $b$ and $c$. Take $S=\begin{pmatrix}
a(\alpha_1+\alpha_2)+b\alpha_1 & a(\alpha_1+\alpha_2)+c\alpha_2 \\
a(\beta_1+\beta-2)+b\beta_1 & a(\beta_1+\beta_2)+c\beta_2 \\
\end{pmatrix}$, and $C=\begin{pmatrix}
\alpha_1 & \alpha_2 \\
\beta_1 & \beta_2 \\
\end{pmatrix}$. We can check that $C S^T$ is always symmetric. We indeed have a genus $2$ tropical curve in the class $C$, as depicted on Figure \ref{figure tripode}. Notice that the corner of the parallelogram is the second vertex of the curve. The example of Figure \ref{figure example tropical curves} $(a)$ is also of this type, except the curve has been translated so that the corner of the parallelogram is not a vertex of the curve anymore.
\end{expl}

\begin{figure}
\begin{center}
\begin{tabular}{ccc}
\begin{tikzpicture}[line cap=round,line join=round,>=triangle 45,x=0.6cm,y=0.6cm]
\clip(0,0) rectangle (6,6);
\draw [line width=0.5pt] (0,0)-- (0.5,5);
\draw [line width=0.5pt] (0,0)-- (4,1);
\draw [line width=0.5pt] (0.5,5)-- (4.5,6);
\draw [line width=0.5pt] (4,1)-- (4.5,6);

\draw [line width=1.5pt] (0,0)-- (2,2);
\draw [line width=1.5pt] (2,2)-- (4,1);
\draw [line width=1.5pt] (2,2)-- (0.5,5);
\begin{scriptsize}

\end{scriptsize}
\end{tikzpicture}
&
\begin{tikzpicture}[line cap=round,line join=round,>=triangle 45,x=0.4cm,y=0.4cm]
\clip(0,0) rectangle (10,10);
\draw [line width=0.5pt] (0,0)-- (3,6);
\draw [line width=0.5pt] (0,0)-- (6,2);
\draw [line width=0.5pt] (6,2)-- (9,8);
\draw [line width=0.5pt] (3,6)-- (9,8);

\draw [line width=1.5pt] (0,0)-- (3,2);
\draw [line width=2pt] (3,2)-- (3,6);
\draw [line width=2pt] (3,2)-- (6,2);
\begin{scriptsize}
\draw (5,2.5) node {$3$};
\draw (3.5,4) node {$2$};
\end{scriptsize}
\end{tikzpicture} \\
$(a)$ & $(b)$ \\
\end{tabular}
\caption{\label{figure tripode} Examples of genus $2$ tropical curve.}
\end{center}
\end{figure}
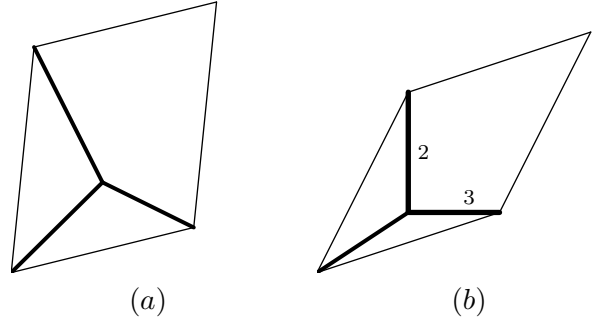

\subsubsection{Miscellaneous} We end with some definition that intervenes at different moments of the paper regarding the enumerative problems and multiplicities.

\begin{defi}
\begin{itemize}[label=-]
\item For an irreducible tropical curve $h:\Gamma\to\TT A$, its gcd $\delta_\Gamma$ is the gcd of the weights of its edges.
\item An irreducible curve is said to be \textit{primitive} if its gcd is $1$.
\item For an irreducible curve $h:\Gamma\to\TT A$ and an integer $k$, we have a tropical curve $k h:k\Gamma\to\TT A$ where the curve $k\Gamma$ is $\Gamma$ where the lengths of the edges have been divided by $k$, and the slope of $k h$ on the edges is the slope of $h$ multiplied by $k$. This curve is denoted by $k\Gamma$.
\end{itemize}
\end{defi}

In particular, $\delta_{k\Gamma}=k\delta_\Gamma$, and every irreducible parametrized tropical curve $h:\Gamma\to\TT A$ can be written as $\delta_\Gamma\Gamma'$ for a primitive parametrized tropical curve $\Gamma'$. For a reducible curve, we can speak about the gcd of each of its irreducible components.

\subsection{Cutting/lifting procedure}
\label{section lifting}

As the tropical torus $\TT A$ is constructed as a quotient of $N_\RR$, it is possible to relate tropical curves inside $\TT A$ to tropical curves inside $N_\RR$. We explicit the relation and procedure to construct one from another in this section.

\medskip

Let $h:\Gamma\to A$ be a connected genus $g$ parametrized tropical curve in the tropical abelian surface $\TT A$. Assume that it has no contracted edge. If $\Gamma$ had a contracted edge $e$, we would reduce to the following situations:
\begin{itemize}[label=-]
\item If $e$ is disconnecting, we have $\Gamma-e=\Gamma_1\sqcup\Gamma_2$ and we reparametrize $h:\Gamma\to \TT A$ by $\Gamma_1\sqcup\Gamma_2$, which is a reducible curve, with components of respective genera $g_1$ and $g_2$ with $g_1+g_2=g$.
\item If $e$ is not disconnecting, $\Gamma-e$ is a connected curve of genus $g-1$.
\end{itemize}

\begin{defi}
A subset $\Q\subset\Gamma$ of points located on the edges of $\Gamma$ is said to be \textit{lifting} if the image of the morphism $h_*:\pi_1(\Gamma\backslash\Q)\to \pi_1(\TT A)=\Lambda\simeq \ZZ^2$ is trivial.
\end{defi}

Taking a point on every edge of the complement of a spanning tree of $\Gamma$, we see that a lifting set always exists, and the complement $\Gamma\backslash\Q$ can be connected. The trivialness of the map $h_*:\pi_1(\Gamma\backslash\Q)\to \pi_1(\TT A)$ ensures that one can lift $h$ to
$$\tilde{h}:\Gamma\backslash\Q \longrightarrow N_\RR.$$
Then, replacing the edges that have been cut by infinite edges, we get a parametrized tropical curve
$$\tilde{h}:\widetilde{\Gamma}_\Q\longrightarrow N_\RR,$$
inside the plane $N_\RR$. When $\Q$ is fixed, we just denote it by $\widetilde{\Gamma}$.

\begin{expl}
On Figure \ref{figure lifting process} we see a tropical curve of genus $5$ in the class $\begin{pmatrix}
2 & 0 \\
0 & 2 \\
\end{pmatrix}$ in some tropical torus. We choose $5$ points on it so that the complement has no cycle. This choice is depicted on $(a)$. Then we unfold it to the universal cover and consider a connected component of the complement of the marked point. We get $(b)$. Finally, we prolong to infinity the edges cut by a point of the lifting set to get a tropical curve in $N_\RR$ on $(c)$. Notice that the unbounded end of the lifted curve are paired together.
\end{expl}

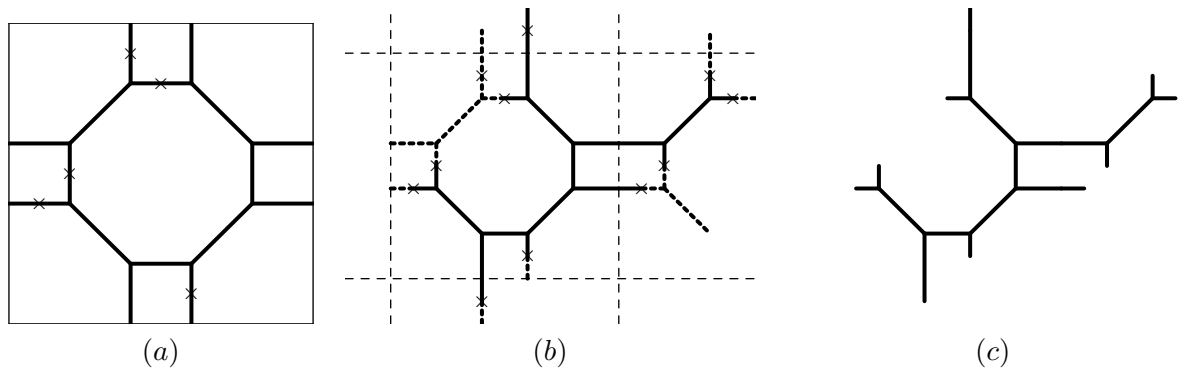
\begin{figure}
\begin{center}
\begin{tabular}{ccc}
\begin{tikzpicture}[line cap=round,line join=round,>=triangle 45,x=0.8cm,y=0.8cm]
\clip(0,0) rectangle (5,5);
\draw [line width=0.5pt] (0,0)-- (0,5);
\draw [line width=0.5pt] (0,0)-- (5,0);
\draw [line width=0.5pt] (0,5)-- (5,5);
\draw [line width=0.5pt] (5,0)-- (5,5);

\draw [line width=1.5pt] (0,2)-- (1,2);
\draw [line width=1.5pt] (1,2)-- (2,1);
\draw [line width=1.5pt] (2,1)-- (3,1);
\draw [line width=1.5pt] (3,1)-- (4,2);
\draw [line width=1.5pt] (4,2)-- (5,2);

\draw [line width=1.5pt] (0,3)-- (1,3);
\draw [line width=1.5pt] (1,3)-- (2,4);
\draw [line width=1.5pt] (2,4)-- (3,4);
\draw [line width=1.5pt] (3,4)-- (4,3);
\draw [line width=1.5pt] (4,3)-- (5,3);

\draw [line width=1.5pt] (1,3)-- (1,2);
\draw [line width=1.5pt] (2,0)-- (2,1);
\draw [line width=1.5pt] (3,0)-- (3,1);
\draw [line width=1.5pt] (4,3)-- (4,2);

\draw [line width=1.5pt] (2,4)-- (2,5);
\draw [line width=1.5pt] (3,4)-- (3,5);
\begin{scriptsize}
\draw (0.5,2) node {$\times$};
\draw (1,2.5) node {$\times$};
\draw (3,0.5) node {$\times$};
\draw (2.5,4) node {$\times$};
\draw (2,4.5) node {$\times$};
\end{scriptsize}
\end{tikzpicture}
&
\begin{tikzpicture}[line cap=round,line join=round,>=triangle 45,x=0.6cm,y=0.6cm]
\clip(-1,-1) rectangle (8,6);
\draw [line width=0.5pt,dashed] (0,-1)-- (0,6);
\draw [line width=0.5pt,dashed] (-1,0)-- (8,0);
\draw [line width=0.5pt,dashed] (-1,5)-- (8,5);
\draw [line width=0.5pt,dashed] (5,-1)-- (5,6);

\draw [line width=1.5pt,dotted] (0,2)-- (0.5,2);
\draw [line width=1.5pt] (0.5,2)-- (1,2);
\draw [line width=1.5pt] (1,2)-- (2,1);
\draw [line width=1.5pt] (2,1)-- (3,1);
\draw [line width=1.5pt] (3,1)-- (4,2);
\draw [line width=1.5pt] (4,2)-- (5,2);

\draw [line width=1.5pt,dotted] (0,3)-- (1,3);
\draw [line width=1.5pt,dotted] (1,3)-- (2,4);
\draw [line width=1.5pt,dotted] (2,4)-- (2.5,4);
\draw [line width=1.5pt] (2.5,4)-- (3,4);
\draw [line width=1.5pt] (3,4)-- (4,3);
\draw [line width=1.5pt] (4,3)-- (5,3);

\draw [line width=1.5pt,dotted] (1,3)-- (1,2.5);
\draw [line width=1.5pt] (1,2.5)-- (1,2);
\draw [line width=1.5pt] (2,-0.5)-- (2,1);
\draw [line width=1.5pt,dotted] (2,-0.5)-- (2,-1);
\draw [line width=1.5pt,dotted] (3,0)-- (3,0.5);
\draw [line width=1.5pt] (3,0.5)-- (3,1);
\draw [line width=1.5pt] (4,3)-- (4,2);

\draw [line width=1.5pt,dotted] (2,4)-- (2,5.5);
\draw [line width=1.5pt] (3,4)-- (3,5.5);
\draw [line width=1.5pt] (3,5.5)-- (3,6);

\draw [line width=1.5pt] (5,2)-- (5.5,2);
\draw [line width=1.5pt,dotted] (5.5,2)-- (6,2);
\draw [line width=1.5pt] (6,3)-- (6,2.5);
\draw [line width=1.5pt,dotted] (6,2.5)-- (6,2);
\draw [line width=1.5pt] (5,3)-- (6,3);
\draw [line width=1.5pt] (6,3)-- (7,4);
\draw [line width=1.5pt] (7,4)-- (7.5,4);
\draw [line width=1.5pt,dotted] (7.5,4)-- (8,4);
\draw [line width=1.5pt,dotted] (6,2)-- (7,1);
\draw [line width=1.5pt] (7,4)-- (7,4.5);
\draw [line width=1.5pt,dotted] (7,4.5)-- (7,5.5);

\begin{scriptsize}
\draw (0.5,2) node {$\times$};
\draw (1,2.5) node {$\times$};
\draw (3,0.5) node {$\times$};
\draw (2.5,4) node {$\times$};
\draw (2,4.5) node {$\times$};

\draw (5.5,2) node {$\times$};
\draw (6,2.5) node {$\times$};
\draw (3,5.5) node {$\times$};
\draw (7.5,4) node {$\times$};
\draw (7,4.5) node {$\times$};
\draw (2,-0.5) node {$\times$};
\end{scriptsize}
\end{tikzpicture}
&
\begin{tikzpicture}[line cap=round,line join=round,>=triangle 45,x=0.6cm,y=0.6cm]
\clip(-1,-1) rectangle (8,6);

\draw [line width=1.5pt] (0.5,2)-- (1,2);
\draw [line width=1.5pt] (1,2)-- (2,1);
\draw [line width=1.5pt] (2,1)-- (3,1);
\draw [line width=1.5pt] (3,1)-- (4,2);
\draw [line width=1.5pt] (4,2)-- (5,2);

\draw [line width=1.5pt] (2.5,4)-- (3,4);
\draw [line width=1.5pt] (3,4)-- (4,3);
\draw [line width=1.5pt] (4,3)-- (5,3);

\draw [line width=1.5pt] (1,2.5)-- (1,2);
\draw [line width=1.5pt] (2,-0.5)-- (2,1);

\draw [line width=1.5pt] (3,0.5)-- (3,1);
\draw [line width=1.5pt] (4,3)-- (4,2);

\draw [line width=1.5pt] (3,4)-- (3,5.5);
\draw [line width=1.5pt] (3,5.5)-- (3,6);

\draw [line width=1.5pt] (5,2)-- (5.5,2);

\draw [line width=1.5pt] (6,3)-- (6,2.5);

\draw [line width=1.5pt] (5,3)-- (6,3);
\draw [line width=1.5pt] (6,3)-- (7,4);
\draw [line width=1.5pt] (7,4)-- (7.5,4);

\draw [line width=1.5pt] (7,4)-- (7,4.5);

\end{tikzpicture} \\
$(a)$ & $(b)$ & $(c)$ \\
\end{tabular}
\caption{\label{figure lifting process}On the left a tropical curve in a tropical torus with a lifting set, and on the right the corresponding lifting in the plane $N_\RR$.}
\end{center}
\end{figure}

Let $x=|\Q|$ and let $n_1,\dots,n_x$ be the slopes of the edges on which lie the points in $\Q$. The curve $(\widetilde{\Gamma},\tilde{h})$ is of genus $g-x$ and has degree $\{n_i,-n_i\}_{i=1}^x$. Recall that the degree of a tropical curve inside $N_\RR$ is the collection of the slopes of its unbounded ends.

\begin{lem}
Let $i\in[\![1;x]\!]$ indexing a pair of ends $\{e_i,f_i\}$ of $\widetilde{\Gamma}$, corresponding to a point $p_i$ in $\Gamma$. Let $\tilde{\gamma}$ be a path from $e_i$ to $f_i$ inside $\widetilde{\Gamma}$. It projects to a loop $\gamma$ in $(\Gamma\backslash\Q)\cup\{p_i\}$, and thus in $\TT A$. Then, considering $\gamma\in H_1(\TT A)$, one has
$$\det(n_i,e_i)+\det(-n_i,f_i)=\det(n_i,S\gamma).$$
\end{lem}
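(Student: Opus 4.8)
The plan is to read both terms on the left-hand side as the \emph{moments} of the two ends $e_i$ and $f_i$, and to show that their sum measures precisely the displacement in $N_\RR$ between the two preimages of $p_i$, which is exactly $S\gamma$. First I would recall the translation-invariance of moments: for an unbounded end of a plane tropical curve with weighted direction vector $u$, the quantity $\det(u,a)$ does not depend on the point $a\in N_\RR$ chosen on that end, since $\det(u,a+tu)=\det(u,a)$ for all $t$ by multilinearity and antisymmetry of the determinant. This is the moment appearing in the tropical Menelaus theorem \cite{mikhalkin2017quantum}, and the two summands $\det(n_i,e_i)$ and $\det(-n_i,f_i)$ are to be read as the moments of the ends $e_i$ and $f_i$, with $e_i,f_i$ denoting any points of $N_\RR$ lying on the respective ends.

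Next I would pin down the two ends concretely. Since $e_i$ and $f_i$ are the two ends produced by cutting the single edge through $p_i$, the point $p_i$ has exactly two preimages under $\tilde h$ in $\widetilde{\Gamma}_\Q$: a point $P_i$ lying on $e_i$ and a point $Q_i$ lying on $f_i$, both projecting to $h(p_i)\in\TT A$. Choosing $P_i$ as the representative of $e_i$ and $Q_i$ as the representative of $f_i$, the left-hand side becomes, by bilinearity of the determinant,
$$\det(n_i,P_i)+\det(-n_i,Q_i)=\det(n_i,P_i)-\det(n_i,Q_i)=\det\bigl(n_i,\,P_i-Q_i\bigr).$$
It then remains to identify $P_i-Q_i$ with $S\gamma$. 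This is the covering-space statement that, in the universal cover $N_\RR\to\TT A$ whose deck group is $\Lambda$ acting through the inclusion $S$, the two lifts $P_i,Q_i$ of the single point $h(p_i)$ differ by the deck translation attached to the loop $\gamma$; as $\gamma$ is by construction the projection of a path $\tilde\gamma$ running from $e_i$ to $f_i$, orienting $\gamma$ accordingly yields $P_i-Q_i=S\gamma$ and hence the claimed identity.

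The computation itself is a one-line use of multilinearity; the delicate point, and the place where care is needed, is the orientation bookkeeping of the final step. One must match the labeling of $e_i$ against $f_i$, the chosen orientation of $\gamma$ (here, from $e_i$ to $f_i$), and the sign convention for the moment, so that the displacement of the two lifts is read as $+S\gamma$ rather than $-S\gamma$. Once these conventions are fixed consistently with the definition of $S\gamma$ as the image under $S\colon\Lambda\hookrightarrow N_\RR$ of the homology class $\gamma\in H_1(\TT A)\simeq\Lambda$, the equality follows, and summing the resulting per-pair contributions over $i$ is what feeds the tropical Menelaus argument used in the realizability statement.
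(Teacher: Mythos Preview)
Your argument is correct and is essentially the same as the paper's, just packaged differently. The paper views $\det(n_i,-)$ as a tropical $1$-form on $\TT A$, computes its integral over $\gamma$ as $\det(n_i,S\gamma)$, and then parametrizes this integral edge-by-edge along $\tilde\gamma$ as $\sum_k\det(n_i,l_k u_{e_k})$; lifting to $N_\RR$ and using linearity, this sum telescopes to the difference of the values of $\det(n_i,-)$ at the two endpoints, i.e.\ the two moments. You do the telescoping in one step by invoking the deck-transformation description of $P_i-Q_i$, which is exactly the statement that the total displacement along $\tilde h(\tilde\gamma)$ equals $S\gamma$. Your caveat about orientation bookkeeping is well placed (and matches the only delicate point in the paper's argument); once the sign conventions are fixed, the two arguments coincide.
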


\begin{proof}
The element $\det(n_i,-)\in M$ is a tropical $1$-form on $\TT A$. Its value on the loop $\gamma$ is precisely $\det(n_i,S\gamma)$. To compute it, we can also parametrize the loop $\gamma$ by the path $\tilde{\gamma}$:
$$\det(n_i,S\gamma)=\sum_k \det(n_i,l_ku_{e_k}),$$
where the path is comprised of the edges $e_k$, of slope $u_{e_k}$ and length $l_k$. This can be lifted to $N_\RR$, and by linearity of $\det(n_i,-)$, we get the announced result.
\end{proof}

\begin{rem}
For a tropical curve $\Gamma\to N_\RR$ and $e$ an unbounded end of $\Gamma$ of slope $n_e$, the evaluation of $\det(n_e,-)$ at any point of the end is called the \textit{moment} of the end. Using the balancing condition, we get the tropical Menelaus theorem, proved in \cite{mikhalkin2017quantum}. It asserts that the sum of the moments of the unbounded ends is $0$.
\end{rem}

Using the preceding remark, another way to reformulate the result is then that the moments of the unbounded edges of $\widetilde{\Gamma}$ is constrained by the tropical abelian surface $\TT A$.

\begin{prop}\label{proposition deformation}
A small deformation of $\Gamma$ lifts to a small deformation of $\widetilde{\Gamma}$. Conversely, a small deformation of $\widetilde{\Gamma}$ lifts to a deformation of $\Gamma$ if and only if it satisfies the conditions
$$\det(n_i,e_i)+\det(-n_i,f_i)=\det(n_i,S\gamma).$$
\end{prop}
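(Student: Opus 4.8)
The plan is to describe both deformation spaces concretely and to exhibit the lifting/regluing procedure as a bijection between small deformations of $h:\Gamma\to\TT A$ and small deformations of $\tilde{h}:\widetilde{\Gamma}\to N_\RR$ satisfying the stated equalities. For $t$ small every deformation keeps the same combinatorial type and the same edge slopes as the central curve, so a deformation is entirely recorded by the positions of the vertices and the lengths of the bounded edges; I work throughout with this description.

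\textbf{Forward direction.} Given a small deformation $h_t:\Gamma\to\TT A$, the combinatorial type and the homology classes in $\Lambda$ of the cut loops $\gamma$ are discrete data, hence constant along the family. In particular the image of $h_{t,*}:\pi_1(\Gamma\backslash\Q)\to\Lambda$ stays trivial, so the lift $\tilde{h}_t:\widetilde{\Gamma}\to N_\RR$ exists for all small $t$ and is a small deformation of $\widetilde{\Gamma}$. Since each $h_t$ is a genuine parametrized curve in $\TT A$, the preceding Lemma applies verbatim to $h_t$ and yields $\det(n_i,e_i)+\det(-n_i,f_i)=\det(n_i,S\gamma)$ for every $i$, throughout the family.

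\textbf{Converse direction.} This is the substantive part. Let $\tilde{h}_t$ be a small deformation of $\widetilde{\Gamma}$ of fixed combinatorial type satisfying the equalities, and fix a pair of ends $\{e_i,f_i\}$ of slopes $n_i,-n_i$ emanating from vertices $V_{e_i},V_{f_i}$. To reglue them into a single bounded edge of a curve in $\TT A$ passing through the marked point $p_i$, I need $V_{f_i}-V_{e_i}+S\gamma$ to be a positive multiple of $n_i$. The vanishing of its transverse component reads $\det(n_i,V_{f_i}-V_{e_i}+S\gamma)=0$; using that $V_{e_i}$ lies on $e_i$ and $V_{f_i}$ on $f_i$, so that $\det(n_i,V_{e_i})$ and $\det(-n_i,V_{f_i})$ are exactly the moments $\det(n_i,e_i)$ and $\det(-n_i,f_i)$, this unwinds precisely into the stated equality. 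Hence the equalities are equivalent to the reglueability of all pairs, and the positivity of the resulting edge lengths holds automatically for $t$ small because it holds strictly for $\Gamma$. Regluing all pairs produces $h_t:\Gamma\to\TT A$: it is balanced (at $p_i$ because $n_i+(-n_i)=0$, and elsewhere because $\tilde{h}_t$ was), of the same combinatorial type, and inverse to the lifting of the forward direction.

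The main obstacle I expect is the sign and orientation bookkeeping in identifying the transversality condition $\det(n_i,V_{f_i}-V_{e_i}+S\gamma)=0$ with the stated moment equality, together with checking that regluing genuinely yields a closed, balanced parametrized curve of the correct combinatorial type. Once the moment condition is recognized as this transversality-vanishing condition, the remaining verifications (balancing at $p_i$, positivity of the reglued lengths, and mutual inverseness of the two constructions) are soft continuity arguments.
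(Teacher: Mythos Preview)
Your proof is correct and follows essentially the same approach as the paper: the forward direction is immediate from the preceding Lemma, and for the converse you unpack the gluing condition as the vanishing of the transverse component $\det(n_i,V_{f_i}-V_{e_i}+S\gamma)$, which is exactly the moment equality. The paper's own proof is a three-sentence sketch of the same idea; you have simply filled in the computation and the continuity checks (positivity of reglued lengths, balancing at the cut points) that the paper leaves implicit. One minor wording slip: the phrase ``passing through the marked point $p_i$'' is superfluous---$p_i$ is a point of the lifting set $\Q$, and the reglued edge of a deformation need not pass through the original $p_i$; fortunately your actual computation does not use this.
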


\begin{proof}
The first part is obvious, since a lifting set keeps being lifting for a small deformation. Moreover, the preceding lemma ensures that, as $\TT A$ is fixed, the relation between the moments of unbounded ends keeps being satisfied through the deformation. Conversely, the relation between the moments ensures that projecting the curve from $N_\RR$ to $\TT A$, it is possible to glue them together and get a curve inside $\TT A$.
\end{proof}

\begin{expl}
On Figure \ref{figure deformation torus and plane} we depict a tropical curve in a tropical torus and a small deformation of the curve. The deformation clearly lifts to a small deformation of the lifted curve inside $N_\RR$, taking for instance as a lifting set the points of intersection with the boundary of the parallelogram.
\end{expl}

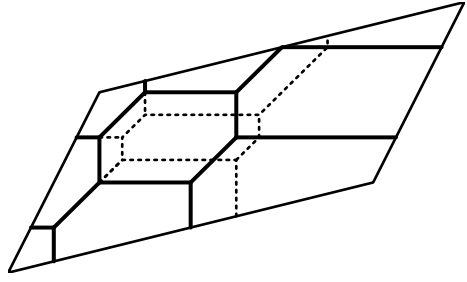
\begin{figure}
\begin{center}
\begin{tikzpicture}[line cap=round,line join=round,>=triangle 45,x=0.6cm,y=0.6cm]
\clip(0,0) rectangle (10,6);
\draw [line width=1pt] (0,0)-- ++(8,2)-- ++(2,4)-- ++(-8,-2)-- ++(-2,-4);
\draw [line width=1.5pt] (0.5,1)-- ++(0.5,0)-- ++(1,1)-- ++(2,0)-- ++(1,1)-- ++(3.5,0);
\draw [line width=1.5pt] (1.5,3)-- ++(0.5,0)-- ++(1,1)-- ++(2,0)-- ++(1,1)-- ++(3.5,0);
\draw [line width=1.5pt] (1,0.25)-- (1,1);
\draw [line width=1.5pt] (2,2)-- ++(0,1);
\draw [line width=1.5pt] (4,1)-- ++(0,1);
\draw [line width=1.5pt] (5,3)-- ++(0,1);
\draw [line width=1.5pt] (3,4)-- ++(0,0.25);

\draw [line width=1pt,dotted] (5,1.25)-- ++(0,1.25)-- ++(-2.5,0)-- ++(0,0.5)-- ++(0.5,0.5)-- ++(2.5,0)-- ++(1.5,1.5)-- ++(0,0.25);
\draw [line width=1pt,dotted] (2,2)-- ++(0.5,0.5);
\draw [line width=1pt,dotted] (2,3)-- ++(0.5,0);
\draw [line width=1pt,dotted] (3,4)-- ++(0,-0.5);
\draw [line width=1pt,dotted] (2,2)-- ++(0.5,0.5);
\draw [line width=1pt,dotted] (5,2.5)-- ++(0.5,0.5)-- ++(0,0.5);
\end{tikzpicture}
\caption{\label{figure deformation torus and plane}A small deformation of a tropical curve inside a tropical torus.}
\end{center}
\end{figure}

\begin{lem}
Let $h:\Gamma\to \TT A$ be a parametrized tropical curve. A disconnecting edge of $\Gamma$ has slope $0$ under $h$.
\end{lem}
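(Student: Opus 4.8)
The plan is to exploit the balancing condition globally by summing it over one of the two components that the disconnecting edge separates. Since $e$ is disconnecting, removing it splits $\Gamma$ into two connected pieces $\Gamma-e=\Gamma_1\sqcup\Gamma_2$, with $e$ joining a vertex $V_1\in V(\Gamma_1)$ to a vertex $V_2\in V(\Gamma_2)$. I orient $e$ from $V_1$ toward $V_2$ and write $u_e\in N$ for its slope. The point is that $e$ is the unique edge of $\Gamma$ having exactly one endpoint in $\Gamma_1$.

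First I would sum the balancing condition over all vertices of $\Gamma_1$:
$$\sum_{V\in V(\Gamma_1)}\ \sum_{e'\ni V} w_{e'}u_{e'}=0,$$
where, as in the definition, each $e'$ is oriented with $V$ as its source. The key step is the telescoping/cancellation: every edge $e'$ having both endpoints $V,V'$ inside $\Gamma_1$ contributes its slope twice with opposite orientations, namely $+w_{e'}u_{e'}$ in the term for $V$ and $-w_{e'}u_{e'}$ in the term for $V'$, so its total contribution vanishes. The only surviving term is the one coming from the edge $e$, which is adjacent to $V_1$ alone among the vertices of $\Gamma_1$; it contributes $w_eu_e$ oriented away from $V_1$. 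Hence the whole sum collapses to $w_eu_e=0$ in $N$, and since $w_e>0$ this forces $u_e=0$, i.e. $h$ has slope $0$ on $e$.

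The argument is essentially a one-line cancellation once the correct accounting of orientations is set up, so I do not anticipate a genuine obstacle; the only care needed is to note that $\Gamma$ has no unbounded ends (as it is a curve in $\TT A$), so that the only edge escaping $\Gamma_1$ is $e$ itself, and that a disconnecting edge cannot be a loop, so $V_1\neq V_2$ and the splitting $\Gamma_1\sqcup\Gamma_2$ is genuine. The conclusion $u_e=0$ says precisely that the disconnecting edge is contracted by $h$, which is the desired statement.
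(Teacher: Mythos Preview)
Your proof is correct and is in fact more direct than the paper's. The paper argues via the lifting procedure: it chooses a lifting set $\Q$ with $\Gamma\backslash\Q$ connected, observes that $e$ cannot meet $\Q$ and remains disconnecting for the lifted planar curve $\widetilde{\Gamma}_\Q$, and that each pair of unbounded ends $\{e_i,f_i\}$ (with opposite slopes $\pm n_i$) lies on the same side of $e$; the balancing condition on one component of $\widetilde{\Gamma}_\Q-e$ then forces the slope of $e$ to vanish. Your argument short-circuits this by summing the balancing condition directly over $V(\Gamma_1)$ in $\Gamma$ itself, using only that curves in $\TT A$ have no unbounded ends. The underlying mechanism---telescoping of internal edges, leaving only the slope of $e$---is the same, but you avoid invoking the lifting set and the pairing of ends entirely. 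The paper's route is natural in context because the lifting machinery is being developed there for later use, but your proof stands on its own and is the cleaner argument for this lemma in isolation.
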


\begin{proof}
Let $e$ be a disconnecting edge for $\Gamma$ and let $\Q$ be a lifting set for $\Gamma$ such that $\Gamma\backslash\Q$ is connected. As $e$ is disconnecting, it cannot contain a point of $\Q$. Moreover, as $e$ is disconnecting for $\Gamma$, it also is for $\widetilde{\Gamma}_\Q$. Finally, for any pair of ends $\{e_i,f_i\}$ associated to a point $p_i\in\mathcal{P}$ belong to the same component of $\widetilde{\Gamma}-e$, otherwise $e$ would not be disconnecting for $\Gamma$. Then, the balancing condition on either component of $\widetilde{\Gamma}-e$ ensures that $e$ has slope $0$.
\end{proof}

\subsection{Dimension of the moduli space}

Let $C\in H_{1,1}(\TT A,\ZZ)=\Lambda\otimes N$ be a class that is realizable in $\TT A$: $C S^T\in\S_2(\RR)$. Let $\M_{g,n}(\TT A,C)$ be the moduli space of genus $g$ irreducible tropical curves with $n$ marked points in the class $C$. It is a polyedral complex which is a union over the different combinatorial types of the tropical curves $\Gamma$. We assume that $\TT A$ is \textit{simple}: it does not contain any elliptic curve. We now aim at computing the dimension of the moduli space $\M_{g}(\TT A,C)$. 

\begin{defi}
An irreducible parametrized tropical curve $h:\Gamma\to \TT A$ is said to be \textit{simple} if:
\begin{itemize}[label=-]
\item $\Gamma$ is trivalent,
\item $h$ is an immersion.
\end{itemize}
\end{defi}

In particular, a simple tropical curve no contracted edges, and no flat vertices (vertices whose adjacent edges belong to a line). We consider simple parametrized tropical curves.

\begin{lem}\label{lemma deformation of a cycle}
If $h:\Gamma\to \TT A$ is a simple parametrized tropical curve that has no contracted edges, and $\gamma$ is a cycle in $\Gamma$, it is always possible to deform the map along $\gamma$.
\end{lem}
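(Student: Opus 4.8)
The statement asks, given a simple parametrized tropical curve $h:\Gamma\to\TT A$ with no contracted edges and a cycle $\gamma$ in $\Gamma$, to produce an honest deformation of $h$ that moves along $\gamma$. My plan is to use the cutting/lifting procedure of Section \ref{section lifting} to transfer the problem to the plane $N_\RR$, where deformations are easy to exhibit, and then to check that the resulting planar deformation satisfies the moment conditions of Proposition \ref{proposition deformation} so that it descends back to a deformation inside $\TT A$. The key observation is that the cycle $\gamma$ represents a nontrivial class in $H_1(\Gamma)$, and I want to realize a deformation that, roughly speaking, changes the ``length along $\gamma$'' while keeping the combinatorial type fixed.

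\textbf{First steps.} First I would choose a lifting set $\Q$ that meets the cycle $\gamma$ in exactly one point $p$, located on some edge $e_\gamma$ of $\gamma$, and is otherwise adapted so that $\Gamma\backslash\Q$ is connected (this is possible by taking one point on each edge in the complement of a spanning tree, and arranging that the spanning tree does not contain $e_\gamma$). Cutting at $\Q$ yields a planar tropical curve $\tilde h:\widetilde\Gamma\to N_\RR$, in which the cycle $\gamma$ has been opened into a pair of parallel unbounded ends $\{e_i,f_i\}$ of opposite slopes $n_i,-n_i$, where $n_i$ is the slope of $e_\gamma$. Since $h$ is an immersion and $\Gamma$ has no contracted edges, the slope $n_i$ is a nonzero lattice vector, so these ends are genuine (non-flat) ends of $\widetilde\Gamma$.

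\textbf{The deformation in the plane and its descent.} In the plane, I can slide the two cut ends $e_i$ and $f_i$ \emph{in parallel}, i.e. translate both ends by the same vector $t\,v$ for small $t$ in a direction $v$ transverse to their common slope, while keeping all other end-directions fixed; because $\widetilde\Gamma$ is a genus $g-1$ planar curve with fixed combinatorial type, such a one-parameter family of planar tropical curves exists (it amounts to varying the moment of the pair $\{e_i,f_i\}$, which the planar balancing condition leaves unconstrained once all other moments are held fixed). The crucial point is then to verify the compatibility condition of Proposition \ref{proposition deformation}, namely that
$$\det(n_i,e_i)+\det(-n_i,f_i)=\det(n_i,S\gamma)$$
continues to hold through the family. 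Since I translate $e_i$ and $f_i$ by the \emph{same} vector, the left-hand side $\det(n_i,e_i)+\det(-n_i,f_i)=\det(n_i,e_i-f_i)$ is unchanged, and $S\gamma\in N_\RR$ is the fixed period determined by $\TT A$, so the right-hand side is constant as well; hence the condition is preserved, and by Proposition \ref{proposition deformation} the planar family glues back to a genuine family of parametrized tropical curves in $\TT A$. For the pairs of ends not lying on $\gamma$ one keeps their relative position fixed so their moment conditions are trivially preserved.

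\textbf{Main obstacle.} The part that requires the most care is ensuring that the chosen planar deformation is \emph{nontrivial} as a deformation along $\gamma$ — that is, that sliding the cut ends genuinely moves the curve and is not absorbed by an ambient translation or by a reparametrization of $\Gamma$. I expect to resolve this by choosing the direction $v$ so that translating $e_i,f_i$ changes the length of the edge $e_\gamma$ (equivalently, changes the projected length of the loop $\gamma$ in $N_\RR$), which is not achievable by a global translation since the other ends are held fixed; the immersion hypothesis and the absence of contracted edges guarantee the edge lengths vary freely in a neighborhood without degenerating the combinatorial type. Thus the construction yields a genuine one-parameter deformation of $h$ supported along $\gamma$, proving the lemma.
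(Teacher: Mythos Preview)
Your approach is correct in outline but takes a genuinely different route from the paper, and it contains a small slip and a mild circularity worth flagging.

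The paper's proof is entirely local and explicit: it writes the cycle $\gamma$ as a sequence of edges with slopes $u_1,\dots,u_n$, lets $v_i$ be the slope of the third edge at the vertex $V_i$ between $e_i$ and $e_{i+1}$, and then moves each vertex by $V_i(t)=V_i+\frac{t}{\det(u_i,u_{i+1})}v_i$. A two-line determinant computation shows that the direction of each edge $e_{i+1}$ is preserved, so this is a genuine one-parameter deformation of $h$ supported on $\gamma$. No cutting, no lifting, no appeal to Proposition~\ref{proposition deformation}.

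Your argument instead lifts to $N_\RR$, opens $\gamma$ into a pair of opposite ends, slides the resulting chord, and descends via the moment conditions. This is valid, but two remarks: first, after cutting at one point on each edge in the complement of a spanning tree you have $|\Q|=g$ and $\widetilde\Gamma$ is rational, not of genus $g-1$ as you wrote. Second, and more substantively, the step ``slide the two cut ends in parallel while keeping the other moments fixed'' is exactly the planar chord-moving statement that the paper proves by the same vertex-displacement formula; you are invoking the planar analogue of the lemma rather than proving it. So your reduction is sound but pushes the actual content (why such a deformation exists) into an unproved assertion about planar curves. The paper's direct formula buys you a self-contained argument that also makes transparent which vertices move and in which direction, which is what is used later in Proposition~\ref{proposition dimension count}(ii).
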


\begin{proof}
Let $\gamma$ be a cycle in $\Gamma$, formed by the edges $e_1,\dots, e_n$, whose respective slopes are $u_1,\dots,u_n$. Let $v_i$ be the slope of the remaining edge coming at the vertex $V_i$ between $e_i$ and $e_{i+1}$, so that
$$u_{i+1}=u_i+v_i.$$
Then, move the vertex in the direction $v_i$:
$$V_i(t)=V_i+\frac{t}{\det(u_i,u_{i+1})}v_i.$$
As
\begin{align*}
\det(u_{i+1},V_{i+1}(t)-V_i(t)) & = \det(u_{i+1},V_{i+1}-V_i) + t\det\left(u_{i+1},\frac{v_{i+1}}{\det(u_i,u_{i+1})}-\frac{v_i}{\det(u_i,u_{i+1})}\right)\\
& = 0 + t\frac{\det(u_{i+1},v_{i+1})}{\det(u_{i+1},u_{i+2})}- t\frac{\det(u_{i+1},v_{i})}{\det(u_{i},u_{i+1})}\\
& =t-t\\
& = 0,\\
\end{align*}
the slope of the edges stays the same and we get a deformation of the tropical curve.
\end{proof}

The deformation of a cycle is illustrated on Figure \ref{figure deformation cycle} in the case of a contractible cycle and a non-contractible one.

\begin{figure}
\begin{center}
\begin{tabular}{cc}
\begin{tikzpicture}[line cap=round,line join=round,>=triangle 45,x=0.6cm,y=0.6cm]
\clip(0,0) rectangle (6,6);
\draw [line width=1.5pt] (1,1)-- ++(2,0)-- ++(2,2)-- ++(0,2)-- ++(-2,0)-- ++(-2,-2)-- ++(0,-2);
\draw [line width=1pt,dotted] (2,2)-- ++(1,0)-- ++(1,1)-- ++(0,1)-- ++(-1,0)-- ++(-1,-1)-- ++(0,-1);
\draw [line width=1.5pt] (0,0)-- ++(1,1);
\draw [line width=1.5pt] (5,5)-- ++(1,1);
\draw [line width=1.5pt] (3,0)-- ++(0,1);
\draw [line width=1.5pt] (3,5)-- ++(0,1);
\draw [line width=1.5pt] (0,3)-- ++(1,0);
\draw [line width=1.5pt] (5,3)-- ++(1,0);

\draw [line width=1pt,dotted] (1,1)-- ++(1,1);
\draw [line width=1pt,dotted] (4,4)-- ++(1,1);
\draw [line width=1pt,dotted] (3,1)-- ++(0,1);
\draw [line width=1pt,dotted] (3,4)-- ++(0,1);
\draw [line width=1pt,dotted] (1,3)-- ++(1,0);
\draw [line width=1pt,dotted] (4,3)-- ++(1,0);
\end{tikzpicture}
&
\begin{tikzpicture}[line cap=round,line join=round,>=triangle 45,x=0.6cm,y=0.6cm]
\clip(0,0) rectangle (7,5);

\draw [line width=1.5pt] (0,1)-- ++(2,2)-- ++(3,0)-- ++(2,2);
\draw [line width=1.5pt] (2,3)-- ++(0,2);
\draw [line width=1.5pt] (5,0)-- ++(0,3);

\draw [line width=1pt,dotted] (0,0)-- ++(2,2)-- ++(3,0)-- ++(2,2);
\draw [line width=1pt,dotted] (2,2)-- ++(0,1);
\end{tikzpicture} \\
$(a)$ & $(b)$ \\
\end{tabular}
\caption{\label{figure deformation cycle}Deformation of a contractible cycle and a non-contractible one}
\end{center}
\end{figure}
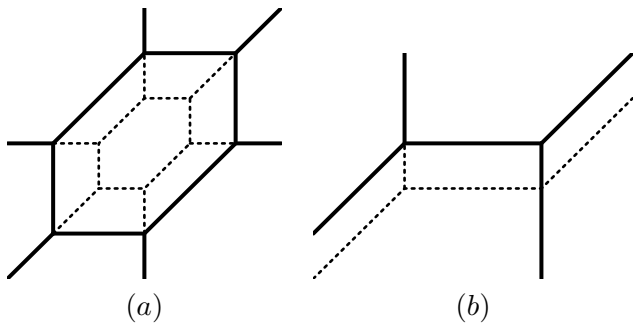

\begin{prop}\label{proposition dimension count}
We have the following facts:
\begin{enumerate}[label=(\roman*)]
\item If $\mathcal{M}_{g,C}^\Gamma$ is a combinatorial type with $\Gamma$ simple, the space of its deformations is of dimension $g$.
\item Let $h:\Gamma\to \TT A$ be a parametrized tropical curve with $h$ an immersion but $\Gamma$ not necessarily trivalent. Then $\Gamma$ can be deformed into a nearby tropical curve that is trivalent.
\item With assumptions as in $(ii)$, the space of its deformation is of dimension at most $g-1$.
\end{enumerate}
\end{prop}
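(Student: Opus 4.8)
The plan is to reduce all three statements to linear algebra on the lifted planar curve, using the cutting/lifting procedure of Section \ref{section lifting} and the deformation criterion of Proposition \ref{proposition deformation}. For $(i)$, I would fix a spanning tree of $\Gamma$ and take the lifting set $\Q$ to consist of one point on each of the $g$ edges outside the tree, so that $\Gamma\backslash\Q$ is contractible and hence lifting. The lifted curve $\widetilde\Gamma\to N_\RR$ is then \emph{rational} and trivalent, with $2g$ unbounded ends paired as $(e_i,f_i)$ of opposite slopes $\pm n_i$. For a rational trivalent planar tropical curve with fixed end directions, the deformation space has dimension (number of ends)$\,-1=2g-1$, and the end-moment map is a linear isomorphism onto the Menelaus hyperplane $\{\sum_{\text{ends}}\mathrm{mom}=0\}\subset\RR^{2g}$ (injectivity plus matching dimensions). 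By Proposition \ref{proposition deformation}, deformations of $h:\Gamma\to\TT A$ correspond exactly to deformations of $\widetilde\Gamma$ satisfying the $g$ affine conditions $L_i:=\det(n_i,e_i)+\det(-n_i,f_i)=\det(n_i,S\gamma_i)$.

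The whole point of $(i)$ is the rank of the system $\{L_i\}$. Read through the moment isomorphism, each $L_i$ is the sum of the two moments of the pair $(e_i,f_i)$, so $\sum_i L_i$ is the sum of \emph{all} end-moments, which vanishes identically by the tropical Menelaus theorem; thus the $L_i$ satisfy exactly one relation, and on the Menelaus hyperplane they involve otherwise disjoint coordinates, so their rank is exactly $g-1$. Consistency of the affine system is automatic since the original curve is already a solution. Hence the deformation space of $\Gamma$ has dimension $(2g-1)-(g-1)=g$. Conceptually this is precisely where superabundance enters: the naive count imposes all $g$ conditions and predicts $g-1$, but the single Menelaus relation returns the extra dimension. (The complementary lower bound $\geq g$ also follows directly from Lemma \ref{lemma deformation of a cycle}, which produces a deformation along each of the $g$ independent cycles.)

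For $(ii)$, I would resolve any vertex $V$ of valency $k\geq 4$. Writing $u_1,\dots,u_k$ for its outgoing slopes, which sum to $0$, I choose a proper subset $I$ with $w:=\sum_{i\in I}u_i\neq 0$; such an $I$ exists because $h$ is an immersion, since if every partial sum vanished all slopes would be zero. Splitting $V$ into two trivalent-friendly vertices joined by a new edge of slope $w$ and small positive length, carrying the edges indexed by $I$ on one side and the remaining ones on the other, preserves both the balancing condition and the immersion, and genuinely deforms the curve. Iterating this over all higher-valent vertices makes $\Gamma$ trivalent. The only delicate point is to verify that each split keeps the map an immersion, which is guaranteed by the nonzero choice of $w$.

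For $(iii)$, I would combine the first two parts. By $(ii)$, $\Gamma$ deforms to a trivalent curve $\Gamma_0$, whose combinatorial stratum has dimension $g$ by $(i)$; the resolving edges created in $(ii)$ have freely varying positive length near $\Gamma_0$, so $\Gamma$ (where these lengths vanish) lies in a proper face of that $g$-dimensional stratum, and its deformation space therefore has dimension at most $g-1$. Alternatively, an Euler-characteristic count gives $E=3g-3-s$ bounded edges with $s=\sum_V(\mathrm{val}\,V-3)\geq 1$ the total excess valency, and repeating the rank argument of $(i)$ yields dimension $g-s\leq g-1$. The main obstacle in the whole proof is the rank statement in $(i)$, i.e.\ checking that the moment map of the rational lift is an isomorphism onto the Menelaus hyperplane so that the conditions $L_i$ have corank exactly one; this is exactly the mechanism producing the one-dimensional superabundance, and every other step is bookkeeping on top of Proposition \ref{proposition deformation}.
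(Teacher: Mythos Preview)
Your arguments for (i) and (iii) are essentially the paper's own: lift via a maximal lifting set to a rational trivalent planar curve, count $2g-1$ planar deformations, use Menelaus to see the $g$ gluing conditions have corank one, and for (iii) view the non-trivalent curve as a proper face of the trivalent stratum. Fine.

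The real discrepancy is in (ii). The paper does \emph{not} resolve a high-valent vertex by a local split; it invokes Lemma \ref{lemma deformation of a cycle}: choose a loop through the bad vertex $V$ whose two edge-directions at every vertex are non-colinear (possible since $h$ is an immersion), and push the loop. This is an \emph{explicit} one-parameter family of maps into $\TT A$, and it visibly changes the combinatorial type at $V$. The paper also uses simplicity of $\TT A$ to rule out the exceptional case of a quadrivalent vertex carrying a loop-edge.

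Your vertex-splitting argument, by contrast, is purely local and imported from the $N_\RR$ setting, and the sentence ``genuinely deforms the curve'' hides the only nontrivial point. In $N_\RR$ you can translate the $I$-side by $\ell w$ and be done. In $\TT A$ you cannot: any cycle that enters $V$ through an edge of $I$ and leaves through an edge of $I^c$ acquires the extra term $\ell w$ in its closing condition, and you have not shown that the remaining edge lengths can absorb this while staying positive. Equivalently, after the split you have a new combinatorial type, and you must prove that the linear functional ``length of the new edge'' is not identically zero on its deformation space. That is exactly the content supplied by Lemma \ref{lemma deformation of a cycle} (apply it to a cycle through the new edge), so completing your argument essentially forces you back to the paper's method. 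As written, (ii) has a gap; you should either appeal to the cycle-deformation lemma to realize the split inside $\TT A$, or argue via (i) that the trivalent stratum is $g$-dimensional and that the wall $\{\ell_{\mathrm{new}}=0\}$ is a proper face, which again needs a non-triviality check.
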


\begin{proof}
\begin{enumerate}[label=(\roman*)]
\item Let $h:\Gamma\to \TT A$ be a simple tropical. Let $\Q$ be a lifting set for $\Gamma$, and assume that the curve $\widetilde{\Gamma}$ is rational. The set $\Q$ is of size $g$, the curve $\widetilde{\Gamma}$ has genus $0$ and $2g$ unbounded ends. As it is trivalent, it has a space of deformation of dimension $2g-1$.

By assumption, there are no contracted edge. Then, the only relation between the moment of the unbounded edges is provided by the tropical Menelaus theorem \cite{mikhalkin2017quantum}:
$$\sum_{i=1}^{2g}\mu_i=0,$$
where the scalar $\mu_i$ denote the moments of the unbounded ends of $\widetilde{\Gamma}$, indexed from $1$ to $2g$. Indeed, for any pair of unbounded ends, it is possible to move the chord between them, modifying only their moments, proving that the functions $\mu_i$ span a space of dimension exactly $2g-1$, since there are $2g$ unbounded ends.

Thus, the $g$ conditions in Proposition \ref{proposition deformation} impose in fact $g-1$ conditions, since the sum of them is $0$ by tropical Menelaus theorem. Then, the space of deformations of $\Gamma$ is of dimension $2g-1-(g-1)=g$.

\item To prove the second statement, assume that a curve $h:\Gamma\to \TT A$ is not trivalent, but $h$ is still an immersion. Let $V$ be a vertex which is not trivalent, and choose a loop in $\Gamma$ that contains the vertex $V$, and such that at any vertex of the loop, the two directions of the edges of the loop are not colinear. This is possible because $h$ is an immersion. Then, move a little bit the loop in one direction, keeping fixed all edges incident to the loop. This modifies the combinatorial type at each non-trivalent vertex. By induction, we can assume that there are no non-trivalent vertices left. As in \cite{blomme2021floor}, the only case where this is not possible is when $V$ is a quadrivalent vertex with a loop on it: an edge whose ends coincide but has non-zero slope. Such an edge would realize a circle, and this cannot happen since $\TT A$ does not contain any elliptic curve.

\item Assume that there exists a family of deformations of $h:\Gamma\to A$ of dimension at least $g$. Combining this family with a deformation of $\Gamma$ into a trivalent curve, we would obtain a trivalent curve that varies in a space of dimension greater than $g+1$. This is impossible by the first point. Hence, $\Gamma$ cannot vary in a space of dimension greater than $g$.
\end{enumerate}
\end{proof}

\begin{rem}\label{remark superabundancy}
Proposition \ref{proposition dimension count} proves that all parametrized tropical curves in $\TT A$ are superabundant, as the expected dimension is $g-1$: let $C\in H_{1,1}(\TT A)$ be a class that is realizable, and $g$ a fixed genus. The ``expected" dimension of the moduli space $\M_{g}(\TT A,C)$ of genus $g$ curves in the class $C$ is
$$3g-3 +2 -2g=g-1.$$
The $3g-3$ is the dimension of the space of curves of genus $g$, the $2$ accounts for the possible translations, and the $-2g$ for the conditions that the $g$ cycles impose on the curve. Fortunately, in this case, the superabundancy makes the dimension count match the complex dimension, which is also $g$. 
\end{rem}

If $h$ is not assumed to be an immersion, there can be ``supersuperabundant" curves if there are some contracted edges, flat vertices or cycles mapped to a line. Such curves vary in a space of too big dimension, but do not contribute any solutions to the enumerative problem that we consider because their image can be reparametrized by a parametrized tropical curve $h':\Gamma'\to \TT A$, where $h'$ is an immersion, and $\Gamma'$ is not trivalent, or has smaller genus.

\begin{rem}
If $g=\deg C+1$, the dimension of the deformation space is easy to compute since curves are generic sections of a line bundle on $\TT A$. Thus, deformations of a curve are corner locus of another section of the same line bundle, hence a dimension $\deg C-1=g-2$, or a translate of it, hence a ``$+2$", giving the expected $g$.
\end{rem}

\begin{rem}
In a sense, computing the dimension of the moduli space amounts to computing the dimension of the moduli space of planar tropical curves which are subject to some boundary conditions assuring that the gluing pass to the quotient. The superabundancy comes from the fact that the position of the boundary points is constrained by the tropical Menelaus theorem, as used in the proof of Proposition \ref{proposition dimension count}.
\end{rem}

\section{Enumerative problem and counts}

\subsection{Enumerative problem}

According to the previous section, it makes sense to count the number of genus $g$ irreducible tropical curves in the class $C$ that pass through $g$ points. If the points are chosen in generic position, Proposition \ref{proposition dimension count} ensures that the curves are simple. 

\medskip

Contrarily to the planar case, as the dimension depends linearly on the genus, the space of reducible curves with components of genera $g_1,\dots,g_r$ adding up to $g$ has also dimension $g$. We have thus two possible conventions regarding reducible curves.
\begin{itemize}[label=-]
\item The genus of a reducible curve $\Gamma$ can be defined by $1-\chi(\Gamma)$, where $\chi$ is the Euler characteristic. Concretely, it is equal to the sum of the genera of the components, plus one minus the number of components. This way, given a configuration $\P$ of $g$ points, we always have deformable genus $g$ curves that pass through $\P$, but these are reducible: the family of reducible curves $\Gamma_1\cup\cdots\cup\Gamma_r$ varies in dimension $g_1+\cdots+g_r=g+r$.
\item The other possibility is to set the genus of a reducible curve equal to the sum of the genera of the components, so that a curve of genus $g$ always varies in dimension $g$. However, we always have to account for reducible curves passing through a configuration $\P$ of $g$ points, although they do not vary in positive dimensional families anymore.
\end{itemize}
In the following, we avoid the problem of reducible curves by considering irreducible curves whenever possible.

\begin{expl}
On Figure \ref{figure dimension deformation space} $(b)$ we see a curve in the class $C=\begin{pmatrix}
2 & 0 \\
0 & 2 \\
\end{pmatrix}$ which is the union of two reducible components of genus $2$, and on $(a)$ an irreducible curve of genus $4$. Both curves vary in spaces of dimension $4$. This is especially easy for the reducible curves since the only possibility of deformation is to translate each of the reducible components.
\end{expl}

\begin{figure}
\begin{center}
\begin{tabular}{cc}
\begin{tikzpicture}[line cap=round,line join=round,>=triangle 45,x=0.4cm,y=0.4cm]
\clip(0,0) rectangle (10,10);
\draw [line width=1pt] (0,0)-- ++(8,2)-- ++(2,8)-- ++(-8,-2)-- ++(-2,-8);

\draw [line width=1.5pt] (0.5,2)-- ++(0.5,0)-- ++(1,1)-- ++(3,0)-- ++(1,1)-- ++(2.5,0);
\draw [line width=1.5pt] (1.25,5)-- ++(0.75,0)-- ++(2,2)-- ++(5.25,0);
\draw [line width=1.5pt] (1,0.25)-- ++(0,1.75);
\draw [line width=1.5pt] (5,1.25)-- ++(0,1.75);
\draw [line width=1.5pt] (2,3)-- ++(0,2);
\draw [line width=1.5pt] (6,4)-- ++(0,5);
\draw [line width=1.5pt] (4,7)-- ++(0,1.5);
\end{tikzpicture}
&
\begin{tikzpicture}[line cap=round,line join=round,>=triangle 45,x=0.4cm,y=0.4cm]
\clip(0,0) rectangle (10,10);
\draw [line width=1pt] (0,0)-- ++(8,2)-- ++(2,8)-- ++(-8,-2)-- ++(-2,-8);

\draw [line width=1.5pt] (1.25,5)-- ++(1.75,0)-- ++(2,2)-- ++(4.25,0);
\draw [line width=1.5pt] (0.75,3)-- ++(5.25,0)-- ++(2,2)-- ++(0.75,0);
\draw [line width=1.5pt] (3,0.75)-- ++(0,4.25);
\draw [line width=1.5pt] (6,1.5)-- ++(0,1.5);
\draw [line width=1.5pt] (5,7)-- ++(0,1.75);
\draw [line width=1.5pt] (8,5)-- ++(0,4.5);
\end{tikzpicture} \\
$(a)$ & $(b)$ \\
\end{tabular}
\caption{\label{figure dimension deformation space}On $(a)$ an irreducible curve of genus $4$ and on $(b)$ a reducible curve with two components of genus $2$.}
\end{center}
\end{figure}
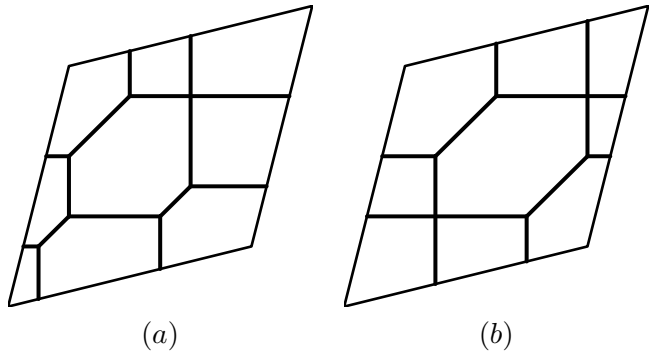

It is possible to partially get rid of reducible curves as follows. If $\TT A$ is chosen generically among the surfaces with curves in the class $C$, \textit{i.e.} the matrix of $S$ is generic among the real matrices such that $CS^T\in\S_2(\RR)$, then $C$ and its multiple are the only classes realizable by curves. If $S$ is not chosen generically, we have classes that may appear or disappear when we move $\TT A$ among the space of abelian surfaces with curves in the class $C$.

\medskip

We already know several results concerning enumeration of curves inside abelian surfaces:
\begin{itemize}[label=-]
\item In \cite{bryan1999generating}, without tropical geometry, J. Bryan and N. Leung give generating functions for the number of genus $g$ curves in a primitive class $C$ passing through $g$ points, or in a fixed linear system and passing through $g-2$ points. This result does not depend on the choice of the abelian surface as long as the class $C$ is realizable by some curve. In fact, an abelian surface is Hyperk\"ahler, \textit{i.e.} given a metric, it has a whole sphere of possible complex structures. For a class $C$, exactly one of the possible complex structures admits curves in the class $C$, and the number of such curves passing through $g$ points does not depend on the chosen abelian surface. This count includes reducible curves. They managed to compute these numbers for primitive classes $C$ by making an explicit computation for abelian surfaces which are product of two generic elliptic curves.

\item In \cite{halle2017tropical}, L. Halle and F. Rose use tropical geometry and maps between tropical tori to count the number of genus $2$ curves in a fixed class passing through $2$ points. They recover the result of \cite{bryan1999generating} in the genus $2$ case.

\item Finally, in \cite{nishinou2020realization}, T. Nishinou gives a correspondence theorem that allows one to compute the number of genus $g$ curves passing through $g$ points and that tropicalize to some given trivalent curve. This recovers the result from \cite{bryan1999generating} provided one is able to solve the analog tropical problem, at least for some abelian surface containing curves in the class $C$.
\end{itemize}

Let $\mathcal{P}\subset \TT A$ be a configuration of $g$ points chosen generically inside $\TT A$. Let $h:\Gamma\to \TT A$ be a tropical curve of genus $g$ in the class $C$ passing through $\mathcal{P}$. The tropical curve is \textit{rigid}, \textit{i.e.} it is not possible to deform it while still passing through the points of $\mathcal{P}$. We have the following.

\begin{prop}\label{proposition complement tree}
Let $h:\Gamma\to \TT A$ be an irreducible tropical curve of genus $g$ in the class $C$ passing through $\P$. Then:
\begin{itemize}[label=-]
\item the tropical curve is simple and rigid,
\item the complement $\Gamma\backslash h^{-1}(\mathcal{P})$ is a tree: it is connected and has no cycles.
\end{itemize}
\end{prop}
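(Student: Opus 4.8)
The plan is to treat the two bulleted claims separately, with both resting on the dimension count of Proposition \ref{proposition dimension count} together with the cycle deformation of Lemma \ref{lemma deformation of a cycle}. For simplicity and rigidity I would encode ``passing through $\P$'' by an evaluation map and compare dimensions. On a fixed combinatorial stratum, marking the $g$ preimages adds one parameter each (the position of a marked point along its edge), so a stratum of deformation dimension $d$ produces a marked stratum of dimension $d+g$, and the evaluation map $\mathrm{ev}$ sending a curve-with-$g$-marked-points to the $g$-tuple of images in $\TT A^g$ has target dimension $2g$. For a simple curve $d=g$ by Proposition \ref{proposition dimension count}(i), so source and target both have dimension $2g$; a generic $\P$ is a regular value, hence the fibre is $0$-dimensional and the solutions are rigid. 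For an immersed but non-trivalent curve $d\le g-1$ by Proposition \ref{proposition dimension count}(iii), so $\mathrm{ev}$ has image of dimension $\le 2g-1$ and misses a generic $\P$; combined with the remark following Proposition \ref{proposition dimension count} that non-immersed curves reparametrise to immersed ones with non-trivalent source or smaller genus, this excludes every non-simple curve through a generic configuration. I would also record here the genericity facts that, since $\mathrm{ev}$ is finite, for generic $\P$ each $p_j$ has a single preimage lying in the interior of an edge (forcing a preimage onto a vertex image, or two branches of $h(\Gamma)$ through one $p_j$, is a positive-codimension condition); thus $h^{-1}(\P)$ consists of exactly $g$ interior edge points.

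The heart of the argument is showing that $\Gamma\setminus h^{-1}(\P)$ has no cycle. Suppose $\gamma\subset\Gamma\setminus h^{-1}(\P)$ is a cycle, and apply Lemma \ref{lemma deformation of a cycle} to deform $h$ along $\gamma$. The key observation is that this deformation only rescales the edges of $\gamma$ and slides the third edge at each vertex of $\gamma$ along its own slope direction, while leaving all other edges and their interior points pointwise fixed. A marked point lying on such a sliding third edge keeps its image in $\TT A$ fixed, because that edge merely grows or shrinks along the line it already spans, so the point stays at the same location of $\TT A$. Since no marked point lies on $\gamma$ itself, the resulting one-parameter family fixes the images of all marked points while genuinely changing the metric of $\Gamma$, contradicting rigidity. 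Hence $b_1(\Gamma\setminus h^{-1}(\P))=0$.

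I expect this localisation step to be the main obstacle: one must read off precisely the support of the deformation produced by Lemma \ref{lemma deformation of a cycle} and verify that it can be realised without displacing the image of any marked point, in particular for marked points that sit on the edges incident to $\gamma$ rather than strictly away from it. The genericity claims of the first paragraph (single preimage, interior of an edge) also deserve care, since the image $h(\Gamma)$ itself depends on the chosen $\P$, so one should phrase the avoidance of vertices and self-intersections as a regular-value statement for $\mathrm{ev}$ rather than as a statement about a single fixed image.

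Finally I would conclude connectivity by an Euler characteristic count. As $\Gamma$ is connected of genus $g$ we have $\chi(\Gamma)=1-g$, and removing an interior point of an edge raises $\chi$ by $1$; removing the $g$ points of $h^{-1}(\P)$ therefore gives $\chi(\Gamma\setminus h^{-1}(\P))=1$. Combining this with $b_1(\Gamma\setminus h^{-1}(\P))=0$ from the previous step yields $b_0=\chi=1$, so the complement is connected. Being connected and acyclic, $\Gamma\setminus h^{-1}(\P)$ is a tree, which completes the proof.
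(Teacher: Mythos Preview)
Your proposal is correct and follows essentially the same strategy as the paper: use the dimension count of Proposition \ref{proposition dimension count} to force simplicity and rigidity through a generic $\P$, invoke Lemma \ref{lemma deformation of a cycle} to rule out any cycle in $\Gamma\setminus h^{-1}(\P)$, and finish connectivity by the Euler-characteristic computation $\chi=1-g+g=1$. Your treatment is in fact more explicit than the paper's in two places: you phrase simplicity and rigidity via the evaluation map $\mathrm{ev}:\M_{g,g}\to\TT A^g$ and a regular-value argument, whereas the paper argues more informally that non-simple strata have dimension $<g$ and that a non-rigid curve could be pushed into such a stratum; and for the cycle step you spell out why the deformation of Lemma \ref{lemma deformation of a cycle} fixes the images of all marked points (including those on the third edges incident to $\gamma$), while the paper simply asserts that a surviving cycle can be deformed.
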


\begin{proof}
If a curve $h:\Gamma\to \TT A$ is not simple, then either $h$ is not an immersion, or $\Gamma$ is not trivalent. If $h$ is not an immersion, merging the edges that have a common vertex and the same image, it is possible to reparametrize the curve by an immersion with a new graph of smaller genus or vertices of higher valence. In either case, Proposition \ref{proposition dimension count} tells us that the curve varies in a space of dimension strictly smaller than $g$. Thus, a curve passing through a generic configuration $\P$ has to be simple. If the curve was not rigid, it would be possible to deform the curve $\Gamma$ in a $1$-parameter family, and get a curve that is not simple, contradicting the genericity of $\P$. Hence, the curve is rigid.

\medskip

The complement $\Gamma\backslash h^{-1}(\mathcal{P})$ has no cycles since it is always possible to deform a cycle using lemma \ref{lemma deformation of a cycle}. Knowing the complement of the marked points has no cycles, and that its Euler characteristic is $1-g+g=1$, we deduce that it is connected.
\end{proof}

In particular, the set $h^{-1}(\mathcal{P})$ is lifting for the curve $\Gamma$, and the complement has no cycles. Then, we deduce that each marked point $p$ defines a unique path $\lambda_p$ on $\Gamma\backslash h^{-1}(\mathcal{P})$: the one that connects the two halfs of the edge that it splits. In other words, it is the unique loop of $(\Gamma\backslash h^{-1}(\mathcal{P}))\cup\{p\}$. Moreover, this loop realizes some homology class inside $\TT A$, which can be either $0$ or nonzero, and that we also denote by $\lambda_p$.

\medskip

Using Proposition \ref{proposition complement tree} and the lifting procedure from the section \ref{section lifting}, it is now possible to \textit{unfold} the complement of the marked points, lifting it to the universal cover $N_\RR$ of $\TT A$, and get a planar parametrized tropical curve:
$$\tilde{h}:\widetilde{\Gamma}\longrightarrow N_\RR.$$
If the loop $\lambda_p$ associated to a point $p$ is non-zero, there are two distinct points in $N_\RR$ and two corresponding unbounded ends of $\widetilde{\Gamma}$. Otherwise, the point lifts to a single point and $\lambda_p$ to a loop inside $N_\RR$.

\medskip

We now focus on enumerative problems involving tropical curves in the torus. One important feature in which it differs from the planar setting, is that as noticed in \ref{remark superabundancy}, all curves are superabundant: the dimension of their moduli space is greater than the expected dimension. Fix a genus $g$ and a realizable class $C$. Let $\delta(C)$ be the integer length of the class, \textit{i.e.} the biggest integer by which $C$ is divisible. Notice that if a curve $\Gamma$ realizes the class $C$, its gcd (equal to the gcd of the weights of its edges) divides $\delta(C)$. We then consider the two following enumerative problems.
\begin{itemize}[label=-]
\item How many (irreducible) parametrized tropical curves of genus $g$ in the class $C$ pass through a generic configuration $\P$ of $g$ points ?
\item For $k|\delta(C)$, how many irreducible parametrized tropical curves of genus $g$ and gcd $k$ in the class $C$ pass through a generic configuration $\P$ of $g$ points ?
\end{itemize}
Counting reducible curves with a fixed gcd does not make sense because we only defined the gcd of an irreducible curve.

\medskip

As usual, the solutions to these enumerative problems need to be counted with some multiplicity in order to get an invariant, \textit{i.e.} a result that does not depend on the choice of $\P$ provided that it is generic.

\begin{defi}\label{definition multiplicity tropical curve}
Let $h:\Gamma\to\TT A$ be a simple tropical curve. For a vertex $V$ of $\Gamma$ with outgoing slopes $a_V$, $b_V$ and $-a_V-b_V$, let $m_V=|\det(a_V,b_V)|$ be its multiplicity. Then, we set:
\begin{itemize}[label=$\bullet$]
\item the usual multiplicity $m_\Gamma=\prod_{V\in V(\Gamma)}m_V\in \NN$,
\item the refined multiplicity $m^q_\Gamma=\prod_{V\in V(\Gamma)}[m_V]_q=\prod_{V\in V(\Gamma)}\frac{q^{m_V/2}-q^{-m_V/2}}{q^{1/2}-q^{-1/2}}\in\ZZ[q^{\pm 1/2}]$.
\end{itemize}
\end{defi}

The refined multiplicity is a symmetric Laurent polynomial in the variable $q$, which is the product of the quantum analogs of the vertex multiplicities. Let $h:\Gamma\to \TT A$ be a curve, and $\delta$ be an integer. The multiplication by $\delta$ multiplies the vertex multiplicities by $\delta^2$. As a simple genus $g$ curve has $2g-2$ vertices, we have
$$m_{\delta\Gamma}=\delta^{4g-4}m_\Gamma.$$
For the refined multiplicity, the relation is a little bit more complicated since the vertex multiplicities are involved in the exponent of the polynomials. The relation is
$$m_{\delta\Gamma}^q(q)=[\delta^2]_q^{2g-2}m_{\Gamma}^q(q^{\delta^2}).$$

Now, let $\P$ be a generic configuration of points on $\TT A$. We set the following:
\begin{align*}
N_{g,C,k}^{\trop}(\TT A,\P) = \sum_{\substack{h(\Gamma)\supset\P \\ \delta_\Gamma=k}} m_\Gamma , \\
BG_{g,C,k}(\TT A,\P) = \sum_{\substack{h(\Gamma)\supset\P \\ \delta_\Gamma=k}} m^q_\Gamma.\\
\end{align*}
The sums are over the irreducible curves of genus $g$ in the class $C$, having gcd $k$ for $k$ dividing the integral length $\delta(C)$ of $C$, that pass through the point configuration $\P$. Due to the relations between the multiplicities, and as the curves having gcd $k$ are obtained by multiplication of a curve with gcd $1$, we always have the following:
$$ N_{g,C,k}(\TT A,\P)=(k^2)^{2g-2} N_{g,C/k,1}(\TT A,\P) \text{ and } BG_{g,C,k}(\TT A,\P)= [k^2]_q^{2g-2} BG_{g,C/k,1}(\TT A,\P).$$

We can also consider the sums over all the irreducible curves passing through $\P$, without imposing the value of the gcd of the curves. However, we prove the invariance for the counts with the value of the gcd imposed, which is stronger than the invariance for the counts of all curves. In particular, we can choose multiplicities of the form $f(\delta_\Gamma)m_\Gamma$ and still get an invariant. We set
\begin{align*}
M_{g,C}(\TT A,\P) &  = \sum_{h(\Gamma)\supset\P} m_\Gamma = \sum_{k|\delta(C)} N_{g,C,k}(\TT A,\P) \in\NN , \\
N_{g,C}(\TT A,\P) &  = \sum_{h(\Gamma)\supset\P} \delta_\Gamma m_\Gamma = \sum_{k|\delta(C)}k N_{g,C,k}(\TT A,\P) \in\NN , \\
R_{g,C}(\TT A,\P) & = \sum_{h(\Gamma)\supset\P} \delta_\Gamma m^q_\Gamma \in \ZZ[q^{\pm 1/2}] , \\
BG_{g,C}(\TT A,\P) & = \sum_{h(\Gamma)\supset\P} m^q_\Gamma = \sum_{k|\delta(C)} BG_{g,C,k}(\TT A,\P) \in \ZZ[q^{\pm 1/2}] . \\
\end{align*}

These are four different sums over the solutions. The first is with multiplicity $m_\Gamma$ and the second with multiplicity $\delta_\Gamma m_\Gamma$. The third is a refinement of the second since it specializes to it when $q$ is set equal to $1$. The multiplicity has been replaced by $\delta_\Gamma m_\Gamma^q$. The last is also a refined count of solutions, but of the first. The multiplicity does not involve the gcd of the curves, only the vertex multiplicities.

\begin{rem}
Perhaps some other choices of multiplicities would yield invariant counts that are related to invariant counts on complex or real abelian surfaces, similarly to the relation between refined invariants in toric varieties and refined counts of real curves, as investigated in \cite{mikhalkin2017quantum} and \cite{blomme2021refinedreal}.
\end{rem}

\begin{expl}
We refer to the end of the paper for examples and computations, once the pearl diagram algorithm has been described. For now, we just give the contribution of the curve depicted on Figure \ref{figure unique genus 2 curve through 2 points} to the various counts:
\begin{align*}
M_{2,2I_2}^{\trop} & \to  4\times 4=16,\\
N_{2,2I_2}^{\trop} & \to 2\times 4\times 4=32,\\
R_{2,2I_2} & \to 2\times (q^{1/2}+q^{-1/2})\times (q^{1/2}+q^{-1/2})=2(q+2+q^{-1}),\\
BG_{2,2I_2} & \to q+2+q^{-1}.\\
\end{align*}
\end{expl}

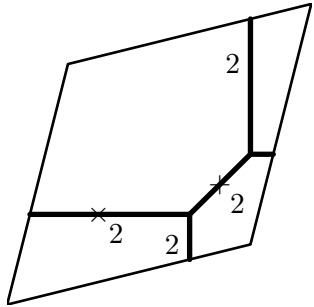
\begin{figure}
\begin{center}
\begin{tikzpicture}[line cap=round,line join=round,>=triangle 45,x=0.4cm,y=0.4cm]
\clip(0,0) rectangle (10,10);
\draw [line width=1pt] (0,0)-- ++(8,2)-- ++(2,8)-- ++(-8,-2)-- ++(-2,-8);

\draw [line width=2pt] (0.75,3)-- ++(5.25,0)-- ++(2,2)-- ++(0.75,0);
\draw [line width=2pt] (6,1.5)-- ++(0,1.5);
\draw [line width=2pt] (8,5)-- ++(0,4.5);
\draw (3,3) node {$\times$};
\draw (7,4) node {$+$};
\draw (3,3) node[below right] {$2$};
\draw (7,4) node[below right] {$2$};
\draw (6,2) node[left] {$2$};
\draw (8,8) node[left] {$2$};

\end{tikzpicture}
\caption{\label{figure unique genus 2 curve through 2 points}The unique genus $2$ curve passing through two points.}
\end{center}
\end{figure}

\subsection{Correspondence theorem}
\label{section correspondence theorem}

In \cite{nishinou2020realization}, T. Nishinou gives a realization theorem and a correspondence theorem for genus $g$ irreducible curves passing through $g$ points in general position on the torus that we now recall. More precisely, he considers the following family of complex algebraic tori:
$$\CC A_t=N_{\CC^*}/\Lambda_t,$$
as in section \ref{section degeneration}. With the same notations, $\Lambda_t$ is generated by $\alpha_1 t^{s_1},\alpha_2 t^{s_2}\in N_{\CC^*}$. The family tropicalizes to the tropical torus
$$\TT A=N_\RR/\Lambda,$$
with $\Lambda=\mathrm{Im} S$. Let $C\in \Lambda\otimes N$ be a class, that we assume to be primitive, otherwise we divide it by its integer length. Let $(e_1,e_2)$ be a basis of $N$, and $(e_1^*,e_2^*)$ be the dual basis of $M$. Assume the slope of $h$ on the edges that intersect $B_1$ is $a_ie_1+b_i e_2$, and $c_ie_1+d_ie_2$ for $B_2$. The vectors associated through Poincar\'e duality are $-b_i e_1^*+a_i e_2^*$ and $-d_i e_1^*+c_ie_2^*$. Assume that $\Lambda_t\subset N_{\CC^*}$ is generated by $(\alpha_{11} t^{s_{11}},\alpha_{12}t^{s_{12}})$ and $(\alpha_{21} t^{s_{21}},\alpha_{22}t^{s_{22}})$. Then, we consider the following quantity, introduced in \cite{nishinou2020realization}:
$$\sigma(C)=\prod_i \alpha_{11}^{-b_i}\alpha_{12}^{a_i}\prod_j \alpha_{21}^{-d_i}\alpha_{22}^{c_i}.$$
For a version free of coordinates choices, for $t=1$, we have $j:\Lambda_1\to N_{\CC^*}$. Thus, we also have the following map, also denoted by $\sigma$:
$$\begin{array}{rccl}
\sigma: & \Lambda\otimes N\simeq\Lambda_1\otimes N & \longrightarrow & \CC^* \\
 & \lambda_1\otimes n & \longmapsto & \mathrm{PD}(n)\cdot j(\lambda_1) \\
\end{array},$$
where $\mathrm{PD}(n)$ is the vector Poincar\'e dual to $n$, $j(\lambda_1)\in N_{\CC^*}$, and the action of $M$ on $N_{\CC^*}$ is the evaluation of the monomial: $m\cdot z^n=z^{\langle m,n\rangle}\in\CC^*$. The complex number $\sigma(C)$ is then the value of $\sigma$ on the class $C$. We now have the following realization theorem.

\begin{theom}(Nishinou \cite{nishinou2020realization})
Let $h:\Gamma\to \TT A$ be a simple tropical curve realizing the class $\delta C$, and let $\delta_\Gamma$ be the gcd of $\Gamma$. The curve is realizable if and only if
$$\sigma\left(\frac{\delta C}{\delta_\Gamma}\right)=(-1)^{\sum m_V/\delta_\Gamma}.$$
\end{theom}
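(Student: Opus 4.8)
The plan is to follow the toric degeneration strategy of Nishinou--Siebert, adapted to the almost toric degeneration of the family $\CC A_t=N_{\CC^*}/\Lambda_t$ described in Section \ref{section degeneration}. A simple tropical curve $h:\Gamma\to\TT A$ induces a polyhedral decomposition of $\TT A$, hence after periodization and coning a degeneration of $\CC A_t$ whose central fibre is a union of toric surfaces glued along their toric boundaries, and a complex curve in a nearby fibre limiting to $\Gamma$ is the same datum as a pre-log curve supported on this central fibre together with a compatible smoothing. First I would pass to the universal cover: using the cutting/lifting procedure of Section \ref{section lifting} I lift $\Gamma$ to a planar tropical curve $\widetilde{\Gamma}\to N_\RR$ (cutting the $g$ edges carrying $h^{-1}(\P)$, so that $\widetilde{\Gamma}$ is a tree with $2g$ ends) and build the corresponding immersed pre-log curve inside $N_{\CC^*}$. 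This splits the realizability question into a purely local part at the vertices and a global part expressing that the curve descend to the quotient by $\Lambda_t$.

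For the local part, at each trivalent vertex $V$ the germ of the central fibre is the toric surface dual to the cone spanned by the slopes $a_V,b_V$, the local toric curves through the prescribed germ are counted by $m_V=|\det(a_V,b_V)|$, and smoothing the node attached to an edge $e$ of weight $w_e$ is governed by a local equation of the shape $xy=(\text{unit})\,t^{w_e}$. This is where both the $\delta_\Gamma$-th roots of unity (all weights are divisible by $\delta_\Gamma$) and the node-smoothing signs $(-1)$ enter. On the tree $\widetilde{\Gamma}$ the matching conditions between adjacent germs can be solved one edge at a time, exactly as in the toric correspondence theorems \cite{mikhalkin2005enumerative}, \cite{nishinou2006toric}; this step is unobstructed and accounts for the factor $\prod_V m_V$ together with the local signs.

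The content of the theorem is the global part. Writing $\lambda_i=[\gamma_i]\in\Lambda=H_1(\TT A,\ZZ)$ for the loop through the $i$-th cut edge, of slope $n_i$, the total class decomposes as $\delta C=\sum_{i}\lambda_i\otimes n_i$, matching the boundary-crossing description of the proposition computing the degree. Descending the curve from $N_{\CC^*}$ to $\CC A_t$ requires invariance under $\Lambda_t$: for each $i$ the two ends $e_i,f_i$ of $\widetilde{\Gamma}$ paired by $\gamma_i$ must be carried into one another by translation by the lattice generator encoding $\gamma_i$, whose coordinates are the monomials $\alpha_{jk}t^{s_{jk}}$. Taking the leading order in $t$ turns each such closing condition into a monomial identity in the $\alpha_{jk}$ whose exponents are exactly the intersection numbers $\det(n_i,S\gamma_i)$ of the Lemma of Section \ref{section lifting}; by construction this leading monomial is precisely $\sigma(\lambda_i\otimes n_i)$. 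Since $\sigma:\Lambda\otimes N\to\CC^*$ is a group homomorphism, the product of the $g$ closing conditions is $\prod_i\sigma(\lambda_i\otimes n_i)=\sigma(\delta C)$, while the accumulated local signs give $(-1)^{\sum_V m_V}$.

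The hard part will be the bookkeeping that reduces these $g$ a priori distinct closing conditions to the single scalar identity, and that pins down the exponent. The mechanism is the superabundancy of Remark \ref{remark superabundancy}: the smoothing of the tree carries enough free $\CC^*$-parameters to solve $g-1$ of the closing conditions by adjustment, but the tropical Menelaus relation forces one linear dependence among the associated moment functions, so that the net effect of the parameters on the \emph{product} cancels and a single residual scalar equation survives, namely $\sigma(\delta C)=(-1)^{\sum_V m_V}$. Finally, because every weight is divisible by $\delta_\Gamma$ the smoothing equations only determine a $\delta_\Gamma$-th root, and using $\sigma(\delta C)=\sigma(\delta C/\delta_\Gamma)^{\delta_\Gamma}$ together with $m_V\equiv 0\pmod{\delta_\Gamma^2}$ (so that $\sum_V m_V/\delta_\Gamma$ is an integer) refines the obstruction to the stated condition $\sigma\!\left(\delta C/\delta_\Gamma\right)=(-1)^{\sum_V m_V/\delta_\Gamma}$. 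Verifying that the root ambiguity is resolved in exactly this way, and that no extra power of $-1$ is introduced by the local toric models, is the delicate point of the argument.
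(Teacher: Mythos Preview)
The paper does not contain a proof of this statement: it is quoted as a theorem of Nishinou \cite{nishinou2020realization} and is not reproved. The only commentary the paper offers is a single sentence immediately following the statement, to the effect that the relation is obtained by taking the product of the Menelaus relations at each vertex (in the sense of Proposition~39, relation~(46), of \cite{mikhalkin2017quantum}), and that this product condition is equivalent to the curve being ``phase-realizable''. There is no further argument in the paper.

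Your sketch is therefore not so much a different route as an attempt to reconstruct the actual proof from \cite{nishinou2020realization}, which the present paper deliberately black-boxes. The high-level shape of what you wrote is reasonable and is consistent with the paper's one-line hint: the global obstruction does indeed come from multiplying the vertexwise Menelaus-type constraints and requiring compatibility with the $\Lambda_t$-action. But several of the steps you flag as ``the hard part'' or ``the delicate point'' are genuinely where the work lies, and your outline does not discharge them. In particular: (i) the reduction of the $g$ closing conditions to a single scalar identity via the superabundancy/Menelaus dependence needs to be made precise rather than asserted; (ii) the tracking of the sign $(-1)^{\sum m_V}$ through the local toric models is exactly the kind of bookkeeping that cannot be waved at; and (iii) the passage from $\sigma(\delta C)=(-1)^{\sum m_V}$ to the refined statement with $\delta_\Gamma$ in the denominator is not simply ``take a $\delta_\Gamma$-th root'': you must show that the obstruction actually lives at that finer level, not merely that it is consistent with it.

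If your goal is to match what this paper does, the honest thing is to cite \cite{nishinou2020realization} and, if you want, add the one-sentence explanation about products of vertex Menelaus relations. If your goal is to give an independent proof, what you have is a plausible roadmap but not a proof, and you should go to Nishinou's paper to see how the sign and the $\delta_\Gamma$-refinement are actually handled.
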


The relation from Nishinou's realization's theorem is obtained by making the product of Menelaus relations at each vertex of the tropical curve, and is equivalent to the curve being ``phase-realizable". Here, Menelaus relation means the relation (46) in Proposition 39 from \cite{mikhalkin2017quantum}. It is the relation between the phases of the edges adjacent to a vertex for a tropical curve that is realizable.

\medskip

The necessary condition from Nishinou's theorem corresponds in fact to the necessary condition that the class $C$ of the curve satisfies Riemann bilinear relations for members of the family $\CC A_t$. In the condition given in section \ref{section degeneration}, we do not have any condition on the tropical curve, \textit{i.e.} the right-hand side does not appear. This is due to the following lemma, that asserts that the right hand-side of the equality in Nishinou's theorem is always $1$.

\begin{lem}
Let $h:\Gamma\to\TT A$ be a simple parametrized tropical curve, and let $\delta_\Gamma$ be the gcd of the curve. Then
$$\sum_{V\in V(\Gamma)}\frac{m_V}{\delta_\Gamma}\equiv 0 \mod 2.$$
\end{lem}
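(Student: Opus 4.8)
The plan is to reduce the statement to a purely mod~$2$ count of vertices. First I would use the decomposition $\Gamma=\delta_\Gamma\Gamma'$ of an irreducible curve as the scaling of a primitive curve $\Gamma'$. Since scaling by $\delta_\Gamma$ multiplies every slope by $\delta_\Gamma$, at a vertex $V$ with primitive counterpart $V'$ one has $m_V=\delta_\Gamma^2 m_{V'}$, so that $m_V/\delta_\Gamma=\delta_\Gamma m_{V'}$ is an integer and
$$\sum_{V\in V(\Gamma)}\frac{m_V}{\delta_\Gamma}=\delta_\Gamma\sum_{V'\in V(\Gamma')}m_{V'}.$$
It then suffices to show that for a simple curve the sum $\sum_V m_V$ of vertex multiplicities is even: if $\delta_\Gamma$ is even the claim is immediate, and if $\delta_\Gamma$ is odd the parity of the left-hand side equals that of $\sum_{V'}m_{V'}$.

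Next I would work over $\mathbb{F}_2$. Reducing each slope $u_e$ modulo $2$ gives a colouring of the edges by $N/2N\cong\mathbb{F}_2^2$, and the balancing condition $\sum_{e\ni V}u_e=0$ descends to $\sum_{e\ni V}\bar u_e=0$ in $\mathbb{F}_2^2$ at every trivalent vertex. Because reduction commutes with the determinant, $m_V=|\det(a_V,b_V)|$ is odd exactly when $\bar a_V,\bar b_V$ are linearly independent over $\mathbb{F}_2$. Using $\bar c_V=\bar a_V+\bar b_V$, a short case analysis of the multisets of three colours summing to $0$ shows that the only possibilities are $\{0,0,0\}$, $\{0,\xi,\xi\}$ for a nonzero $\xi$, and $\{a,b,a+b\}$ (the three nonzero elements), and that $m_V$ is odd if and only if $V$ is of the last, tri-coloured, type.

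Finally I would count. Write $n_A$ for the number of tri-coloured vertices, so that $\sum_V m_V\equiv n_A \pmod 2$. Fix a nonzero colour $\xi$ and let $E_\xi$ denote the set of edges whose reduced slope is $\xi$. Summing the local degrees in the subgraph $E_\xi$ over all vertices gives $2|E_\xi|$; on the other hand each tri-coloured vertex is incident to exactly one edge of colour $\xi$, while a vertex of either other type is incident to $0$ or $2$ such edges. Hence $n_A$ differs from $2|E_\xi|$ by an even number, so $n_A$ is even, $\sum_V m_V$ is even, and the lemma follows.

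The computation is short; the only point requiring care is the mod~$2$ classification of vertices, namely checking that $m_V$ is odd precisely when the three reduced slopes are distinct and nonzero, and that in this case each nonzero colour occurs exactly once at the vertex. I expect this to be the main (though modest) obstacle; the reduction to the primitive curve and the handling of the factor $\delta_\Gamma$ are then routine. Note that primitivity of $\Gamma'$ plays no role in the parity count itself---it serves only to extract the factor $\delta_\Gamma^2$ cleanly from each $m_V$.
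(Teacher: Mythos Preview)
Your argument is correct and takes a genuinely different route from the paper. Both proofs share the initial reduction: the case $\delta_\Gamma$ even is immediate since $m_V$ is divisible by $\delta_\Gamma^2$, and for $\delta_\Gamma$ odd one is left with showing that $\sum_V m_V$ is even. From there the approaches diverge. The paper observes that $\sum_V m_V\equiv C\cdot C\pmod 2$ by intersecting the curve with a small translate of itself (double points contribute in pairs, trivalent vertices contribute $m_V$), and then invokes the fact that the intersection form on $H_{1,1}(\TT A)$ is even. Your argument is instead purely combinatorial: reducing slopes to $\mathbb{F}_2^2$, classifying trivalent vertices by their colour multiset, and applying the handshaking lemma to the subgraph of edges of a fixed nonzero colour (which is legitimate here because curves in $\TT A$ have no unbounded ends). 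Your approach is more elementary and self-contained, requiring no intersection theory; the paper's approach, on the other hand, explains the parity geometrically as the evenness of the self-intersection $C\cdot C$, which is conceptually illuminating and connects the lemma to the lattice structure of $H_{1,1}(\TT A)$.
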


\begin{proof}
First, assume that $\delta_\Gamma$ is even. Then, as for any vertex the multiplicity is divisible by $\delta_\Gamma^2$, each term in the sum is still even, and $\sum_{V\in V(\Gamma)}\frac{m_V}{\delta_\Gamma}$ is also even.

Assume that $\delta_\Gamma$ is odd. Then, the sum has the same parity as $\sum_{V\in V(\Gamma)}m_V$. This sum is equal mod $2$ to the intersection number $C\cdot C$. To see this, consider the curve $\Gamma$ and a small translate of it. The intersection points come from the vertices and double points:
\begin{itemize}[label=-]
\item each double point contributes two intersection points with the same multiplicity,
\item each trivalent vertex contributes one intersection points of multiplicity $m_V$.
\end{itemize}
As the intersection form on $H_{1,1}(\TT A)$ is even, since it is given in the basis $(\lambda_i\otimes e_j)$ by the matrix
$$\begin{pmatrix}
0 & 1 & 0 & 0 \\
1 & 0 & 0 & 0 \\
0 & 0 & 0 & 1 \\
0 & 0 & 1 & 0 \\
\end{pmatrix},$$
the result follows.
\end{proof}

Then, choosing $g$ families of points $\mathcal{P}_t$ inside $\CC A_t$ that tropicalize to a set $\mathcal{P}$ of $g$ points inside $\TT A$ which are in generic position, Nishinou gives a formula to count the number of irreducible curves inside $\CC A_t$ passing through $\mathcal{P}_t$ that tropicalize to a given simple tropical curve passing through $\mathcal{P}$. The multiplicity $m_\Gamma^\CC$ of a tropical curve given by the correspondence theorem from \cite{nishinou2020realization}. Let $h:\Gamma\to \TT A$ be a trivalent tropical curve passing through $\mathcal{P}$, and let $\Gamma'$ be the graph $\Gamma$ subdivided by the points of $h^{-1}(\mathcal{P})$. First, choose an orientation of every edge $e\in E(\Gamma')$. Then, we have a map
$$\begin{array}{rccl}
\Theta:& \bigoplus_{v\in V(\Gamma')}N  & \longrightarrow & \bigoplus_{e\in E(\Gamma')}N/N_e \oplus\bigoplus_{1}^g N\\
 & (\phi_v) & \longmapsto & \left( \phi_e,\phi_{v_i}\right) \\
 \end{array},$$
where $N/N_e$ is the $1$-dimensional lattice obtained by quotienting $N$ by the primitive slope of the edge $e$, $\phi_e=\phi_{\partial^+ e}-\phi_{\partial^- e}\in N/N_e$ is the difference between the extremities if the oriented edge $e$ projected to the quotient, and $v_i$ is the vertex of $\Gamma'$ associated to a marked point $p_i\in\mathcal{P}$. The dimension of the domain is $2|V(\Gamma')|=2|V(\Gamma)|+2g$ while the dimension of the codomain is $|E(\Gamma')|+2g=|E(\Gamma)|+3g$. As $\Gamma$ is of genus $g$ and is trivalent, we know that
$$\left\{ \begin{array}{l}
3|V(\Gamma)|=2|E(\Gamma)| ,\\
|V(\Gamma)|-|E(\Gamma)|=1-g.\\
\end{array}\right.$$
Hence,
$$(|E(\Gamma)|+3g)-(2|V(\Gamma)|+2g)=|V(\Gamma)|-|E(\Gamma)|+g=1,$$
and contrarily to the usual planar setting, domain and codomain do not have the same dimension. In fact, this map is not surjective precisely because of the superabundancy of the tropical curves. The multiplicity is defined as follows.

\begin{defi}\label{definition multiplicity nishinou}
The multiplicity of a trivalent tropical curve $h:\Gamma\to A$ passing through $\mathcal{P}$ is 
$$m^\CC_\Gamma=|\ker\Theta_{\CC^*}|\prod_{e\in E(\Gamma')} w_e,$$
where $\Theta_{\CC^*}$ is the map $\Theta\otimes\CC^*$.
\end{defi}

\begin{rem}
In the usual correspondence theorems, such as in \cite{nishinou2006toric} and \cite{tyomkin2017enumeration}, we have a similar map between lattices. This map is used to compute the number of possible \textit{phases} that one can put on the tropical curve. A \textit{phase} is a complex number such that at each vertex, the Menelaus condition is satisfied. See \cite{mikhalkin2005enumerative}, \cite{nishinou2006toric} for more details. The difference is that in those last cases, domain and codomain have the same dimension, and the multiplicity is then equal to the determinant of the map $\Theta$ between lattices. Here, we cannot take the determinant as the two spaces do not have the same dimension. Nevertheless, $|\ker\Theta_{\CC^*}|$ is also equal to the cardinal of the torsion part of $\mathrm{coker}\Theta$, as we see by applying the tensor product to the following small exact sequence:
$$0\to \ZZ^{2 |V(\Gamma)|+2g} \xrightarrow{\Theta} \ZZ^{|E(\Gamma)|+3g} \to \ZZ \oplus G \to 0,$$
with $G$ the torsion part of the cokernel. Applying the functor $-\otimes\CC^*$, we get
$$0\to \mathrm{Tor}(G,\CC^*) \to (\CC^*)^{2 |V(\Gamma)|+2g} \xrightarrow{\Theta_{\CC^*}} (\CC^*)^{|E(\Gamma)|+3g} \to \CC^* \to 0,$$
and as $\CC^*$ is divisible, we have $\mathrm{Tor}(G,\CC^*)\simeq G$. The cokernel can then be computed using exterior powers of the map between the lattices.
\end{rem}

We now have the following correspondence theorem.

\begin{theom}(Nishinou \cite{nishinou2020realization})
Let $h:\Gamma\to \TT A$ be a simple tropical curve passing through $\mathcal{P}$. The number of irreducible parametrized curves passing through $\mathcal{P}_t$ that tropicalize to $(h,\Gamma)$ is
$$m_\Gamma^\CC=|\ker \Theta_{\CC^*}|\prod_{e\in E(\Gamma')} w_e.$$
In particular, if $\N_{g,C}$ denotes the number of complex genus $g$ curves in the class $C$ passing through a generic configuration of $g$ points inside an abelian surface, then
$$N_{g,C}=\N_{g,C}.$$
\end{theom}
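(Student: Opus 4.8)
The plan is to recover each complex curve in $\CC A_t$ from its tropicalization $\Gamma$ by working close to the tropical limit, following Nishinou's degeneration of the family $\CC A_t$ to a union of toric surfaces glued along their toric boundary. Over such a degeneration, a curve tropicalizing to $\Gamma$ is assembled from local pieces: to each vertex $V$ of the subdivided graph $\Gamma'$ one attaches a rational curve in the toric surface associated to $V$, and to each edge $e$ a gluing datum along the corresponding boundary divisor. The realization count then factors as a product of a weight contribution $\prod_{e\in E(\Gamma')} w_e$, coming from the $w_e$-fold choice of a local parametrization along an edge of weight $w_e$, and a phase contribution that counts the consistent assignments of leading coefficients compatible with the Menelaus relation at every vertex.

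The phase contribution is exactly $|\ker\Theta_{\CC^*}|$. The map $\Theta$ records, on its domain, a position in $N$ for every vertex of $\Gamma'$, and on its codomain the edge-matching conditions in $N/N_e$ together with the $g$ point constraints; an element of $\ker\Theta_{\CC^*}$ is precisely a choice of phases that closes up along every edge and respects the incidence conditions. The displayed multiplicity formula then follows once one checks that each such abstract phase assignment lifts to a genuine curve over $\CC A_t$. That every solution is realizable is the content of the realization theorem recalled above: the obstruction reduces to the single equation $\sigma(\delta C/\delta_\Gamma)=(-1)^{\sum m_V/\delta_\Gamma}$, whose right-hand side equals $1$ by the parity lemma, and whose left-hand side is governed by the Riemann bilinear relation $CS^T\in\S_2(\RR)$ that makes $C$ realizable.

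The main obstacle is that, in contrast with the toric case, $\Theta$ is not a map between lattices of equal rank: as computed above its codomain exceeds its domain by exactly $1$, the numerical shadow of the superabundancy of every tropical curve in $\TT A$. One therefore cannot define the multiplicity as $|\det\Theta|$. Instead one uses the exact sequence
$$0\to \ZZ^{2|V(\Gamma)|+2g} \xrightarrow{\Theta} \ZZ^{|E(\Gamma)|+3g} \to \ZZ\oplus G \to 0$$
to identify $|\ker\Theta_{\CC^*}|$ with the order of the torsion subgroup $G$ of $\mathrm{coker}\,\Theta$, computed from the maximal minors of $\Theta$. The delicate deformation-theoretic point is to show that the extra superabundant direction appearing in the cokernel produces no obstruction to integrating the first-order phase solutions into actual curves; this is controlled by the explicit toric degeneration, where the relevant deformation space is unobstructed because the combinatorial type is simple, as guaranteed for generic $\P$ by Proposition \ref{proposition complement tree}.

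Finally, to deduce $N_{g,C}=\N_{g,C}$ I would sum over all simple tropical curves $\Gamma$ of genus $g$ in the class $C$ through $\P$. By Theorem \ref{theorem multiplicity product} the complex multiplicity equals $m_\Gamma^\CC=\delta_\Gamma\,m_\Gamma$, so
$$\N_{g,C}=\sum_{h(\Gamma)\supset\P} m_\Gamma^\CC=\sum_{h(\Gamma)\supset\P}\delta_\Gamma\,m_\Gamma=N_{g,C}(\TT A,\P),$$
where the first equality holds because every complex curve through $\P_t$ tropicalizes to a unique such $\Gamma$ and is counted there with its correct multiplicity, and the last is the definition of $N_{g,C}$. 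Since the left-hand side is the genuine complex count, which is independent of the generic configuration, this simultaneously identifies the tropical count $N_{g,C}(\TT A,\P)$ with $\N_{g,C}$ and establishes its invariance.
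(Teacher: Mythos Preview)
The paper does not prove this theorem. It is stated with the attribution ``(Nishinou \cite{nishinou2020realization})'' and is explicitly introduced as a result being \emph{recalled} from that reference: the section opens with ``T.~Nishinou gives a realization theorem and a correspondence theorem \ldots\ that we now recall.'' No proof is given in the paper; the statement is used as a black box. So there is no proof in the paper to compare your proposal against.

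Your sketch is a reasonable narrative of how correspondence theorems of Nishinou--Siebert type are organized (toric degeneration, vertex-by-vertex local curves, edge gluings producing the weight product, phase compatibility producing $|\ker\Theta_{\CC^*}|$, unobstructedness of the smoothing). But it remains a narrative: the substantive steps --- constructing the degeneration of $\CC A_t$, setting up the log deformation theory, proving that the superabundant direction in $\mathrm{coker}\,\Theta$ is genuinely unobstructed, and showing every complex solution near the limit tropicalizes to a simple $\Gamma$ --- are asserted, not carried out. That is fine if your intent is to indicate why the formula is plausible, but it is not a proof; the actual argument lives in \cite{nishinou2020realization}.

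One logical point on your last paragraph: you deduce $N_{g,C}=\N_{g,C}$ by invoking Theorem~\ref{theorem multiplicity product} to rewrite $m_\Gamma^\CC=\delta_\Gamma m_\Gamma$. In the paper's ordering, the Nishinou theorem is stated \emph{before} Theorem~\ref{theorem multiplicity product} is proved; the ``in particular'' clause is meant to be the tautology $\N_{g,C}=\sum_\Gamma m_\Gamma^\CC$, with the identification of that sum with the paper's $N_{g,C}$ coming only after the multiplicity computation. Using Theorem~\ref{theorem multiplicity product} here is not circular, but it does invert the paper's expository order.
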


\subsection{Statement of the results}

Using Proposition \ref{proposition complement tree} to relate the tropical curve inside $\TT A$ to a planar tropical curve inside $N_\RR$, we can compute the multiplicity and get a formula analog to the multiplicity of a planar tropical curve provided by the correspondence theorem of Mikhalkin \cite{mikhalkin2005enumerative}.

\begin{theo}\label{theorem multiplicity product}
The multiplicity $m_\Gamma^\CC$ provided by Nishinou's correspondence theorem splits as follows:
$$m_\Gamma^\CC=\delta_\Gamma\prod_{V\in V(\Gamma)}m_V = \delta_\Gamma m_\Gamma,$$
where $m_\Gamma$ is the first multiplicity from Definition \ref{definition multiplicity tropical curve}, and $\delta_\Gamma$ is the gcd of the tropical curve $\Gamma$.
\end{theo}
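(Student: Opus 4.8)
The plan is to reduce everything to the determinantal quantity $|\ker\Theta_{\CC^*}|$ from Definition \ref{definition multiplicity nishinou} and to prove the equivalent statement $|\ker\Theta_{\CC^*}|\prod_{e\in E(\Gamma')}w_e=\delta_\Gamma\prod_V m_V$. By the exact sequence exhibited just after Definition \ref{definition multiplicity nishinou}, $\ker\Theta_{\CC^*}$ is isomorphic to the torsion subgroup $G$ of $\mathrm{coker}\,\Theta$; since the free part of the cokernel has rank one, $|G|$ equals the product of the nonzero invariant factors of $\Theta$, i.e. the gcd of its maximal minors. So the whole proof becomes a computation of minors of the integer matrix of $\Theta$, weighted by $\prod_e w_e$.

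The second step is a homogeneity reduction to primitive curves. The map $\Theta$ depends only on the combinatorial type of $\Gamma'$ and on the subgroups $N_e\subset N$ spanned by the \emph{primitive} slopes, hence it is unchanged under the rescaling $\Gamma\mapsto\delta\Gamma$, which multiplies every weight by $\delta$ but fixes the primitive directions. Consequently $|G|=|\ker\Theta_{\CC^*}|$ is a scaling invariant, while $\prod_{e\in E(\Gamma')}w_e$ scales by $\delta^{|E(\Gamma')|}=\delta^{4g-3}$; on the other side $\delta_\Gamma m_\Gamma$ scales by $\delta\cdot\delta^{4g-4}=\delta^{4g-3}$ using the relation $m_{\delta\Gamma}=\delta^{4g-4}m_\Gamma$. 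Writing any irreducible simple curve as $\Gamma=\delta_\Gamma\Gamma_0$ with $\Gamma_0$ primitive and of the same image, the two sides scale identically, so it suffices to treat the primitive case $\delta_\Gamma=1$, where the claim becomes the clean identity $|\ker\Theta_{\CC^*}|\prod_{e\in E(\Gamma')}w_e=\prod_V m_V$.

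For the primitive case I would use Proposition \ref{proposition complement tree}: since $\Gamma\setminus h^{-1}(\mathcal{P})$ is a tree, the cutting/lifting procedure of Section \ref{section lifting} presents $\Gamma$ as a rational planar curve $\widetilde{\Gamma}\to N_\RR$ with $2g$ ends paired by the $g$ marked points. Choosing a spanning tree of $\Gamma'$ and propagating vertex positions from a root along tree edges triangularizes the matrix of $\Theta$: the diagonal blocks are purely local, and at each trivalent vertex the three conditions living in the quotients $N/N_e$ contribute exactly the index $\lvert\det(a_V,b_V)\rvert$ once the explicit weight factors $\prod_e w_e$ of Definition \ref{definition multiplicity nishinou} are reinstated, reproducing $m_V=|\det(a_V,b_V)|$. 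This local tallying is precisely the planar multiplicity computation of Mikhalkin \cite{mikhalkin2005enumerative} recalled in the Appendix, and over all vertices it yields $\prod_V m_V$. The off-diagonal part of $\Theta$ records the gluing of the cut ends, which by the Lemma preceding Proposition \ref{proposition deformation} is governed by the moment relations $\det(n_i,e_i)+\det(-n_i,f_i)=\det(n_i,S\lambda_{p_i})$ with periods $S\lambda_{p_i}$; one must check that, in the primitive case, this global block contributes only the single rank-one free direction responsible for superabundancy and no extra torsion.

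The main obstacle is exactly this last point: controlling the integral structure of the gluing block, i.e. how the sublattice generated by the period/moment relations around the $g$ independent cycles sits inside $\bigoplus_1^g N$, and proving that its torsion is trivial when $\delta_\Gamma=1$ (and, undoing the homogeneity reduction, equal to $\delta_\Gamma$ in general). The key tool will be the tropical Menelaus theorem \cite{mikhalkin2017quantum}, which pins down the unique linear relation among the moments and shows that the common divisor of the edge weights is what measures the integral defect of the closing-up conditions. Once the local factor $\prod_V m_V$ and the triviality of the global torsion are established, combining them with $\prod_{e\in E(\Gamma')}w_e$ gives $m_\Gamma^\CC=\prod_V m_V$ for primitive curves, and the homogeneity reduction upgrades this to $m_\Gamma^\CC=\delta_\Gamma\prod_V m_V=\delta_\Gamma m_\Gamma$ in general.
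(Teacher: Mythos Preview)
Your reductions are fine and match the paper: identifying $|\ker\Theta_{\CC^*}|$ with the torsion part of $\mathrm{coker}\,\Theta$, hence with the gcd of the maximal minors of $\Theta$, and the homogeneity argument reducing to $\delta_\Gamma=1$ are both exactly what the paper does. The problem is that what you call ``the main obstacle'' is the whole proof, and you do not resolve it. Your proposed triangularization via a spanning tree does not straightforwardly separate a ``local'' block producing $\prod_V m_V$ from a ``global gluing'' block whose torsion you then have to control: the codomain has rank one more than the domain, so there is no square matrix to triangularize, and the invocation of the Menelaus relation only tells you the image lies in a primitive hyperplane, not that the quotient by the image has no extra torsion. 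As written, the argument stops precisely where the content is.

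The paper avoids your obstacle entirely by not splitting into local and global pieces. Instead it computes \emph{every} maximal minor of $\Theta$ directly. First it writes down the primitive linear form $\varphi=\sum_e \tfrac{w_e}{\delta}\phi_e$ vanishing on the image (balancing condition), which shows that any minor obtained by deleting one of the $2g$ rows coming from $\bigoplus_1^g N$ is zero. The nonzero minors are those obtained by deleting the row of a single edge $e_0$. For such a minor, orient all edges of the tree $\Gamma'\setminus h^{-1}(\P)$ toward $e_0$; the $I_2$ blocks at the marked points and the $2\times 2$ blocks $\bigl(\det(u_1,-),\det(u_2,-)\bigr)$ at the trivalent leaves let you expand recursively, pruning leaves one by one, and each trivalent vertex contributes $\det(u_1,u_2)=m_V/(w_1w_2)$. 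The minor associated to $e_0$ is therefore $\dfrac{w_{e_0}}{\prod_{e}w_e}\prod_V m_V$. Taking the gcd over all $e_0$ and using B\'ezout on the $w_{e_0}$ gives $\dfrac{\delta_\Gamma}{\prod_e w_e}\prod_V m_V$; multiplying back by $\prod_e w_e$ yields $m_\Gamma^\CC=\delta_\Gamma\prod_V m_V$. This is where the factor $\delta_\Gamma$ actually appears: as $\gcd_{e_0} w_{e_0}$, not as some residual torsion of a gluing block.
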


\begin{rem}\label{remark mult by delta}
The presence of the factor $\delta$ is natural: assume the formula has been proven when $\delta$ is $1$. To get a curve with gcd $\delta>1$, we multiply all the weights of the edges by $\Delta$. As there are $3g-3+g=4g-3$ bounded edges on $\Gamma'$, the formula from Definition \ref{definition multiplicity nishinou} is multiplied by $\delta^{4g-3}$ since $\Theta$ is unchanged, only the weights of the bounded edges changes. Meanwhile, as $m_V$ becomes $\delta^2 m_V$ when the weights are multiplied by $\delta$, the product $\prod_V m_V$ is multiplied by $\delta^{2 V(\Gamma)}=\delta^{4g-4}$. Thus, we need an additional $\delta$ to have the same exponent of homogenization.
\end{rem}

\begin{expl}
\begin{itemize}[label=-]
\item Let us consider the genus $2$ curve passing through $2$ points on Figure \ref{figure unique genus 2 curve through 2 points}. Its gcd is $2$ and both vertices have multiplicity $4$. The Theorem asserts that its complex multiplicity is $2\times 4\times 4=32$. Alternatively, the subdivided curve has $5$ edges of weight $2$. We can see that there is a unique way of putting phases of the curves, which amounts to compute $|\ker\Theta_{\CC^*}|=1$. The multiplicity is still $32$.
\item Consider the two curves on Figure \ref{figure computation multiplicity examples different marking}. Both have multiplicity $4$, since the two vertices have multiplicity $2$, and its gcd is $1$. However, the formula $|\ker\Theta_{\CC^*}|$ does not yield the result in the same way. For the curve in $(a)$, the edge of weight $2$ is marked and counts double in the edges weights product. We can check that $|\ker\Theta_{\CC^*}|$ is $1$ since there is a unique way to put phases on the edges. For the curve on $(b)$, there are two ways to put a phase on the edge of weight $2$, meaning $|\ker\Theta_{\CC^*}|=2$, and it contributes only once in the product of weights, also leading to $4$. This is natural because the count of solutions cannot depend on the choice of the points, since it is equal to some invariant $N_{g,C}$ using the correspondence Theorem.
\end{itemize}
\end{expl}

\begin{figure}
\begin{center}
\begin{tabular}{cc}
\begin{tikzpicture}[line cap=round,line join=round,>=triangle 45,x=0.5cm,y=0.5cm]
\clip(0,0) rectangle (10,10);
\draw [line width=0.5pt] (2,4)-- (0,10);
\draw [line width=0.5pt] (2,4)-- (10,0);
\draw [line width=0.5pt] (0,10)-- (8,6);
\draw [line width=0.5pt] (10,0)-- (8,6);

\draw [line width=2pt] (1.33333333333,6)-- (4,6);
\draw [line width=1.5pt] (4,6)-- (8,2);
\draw [line width=1.5pt] (4,6)-- (5.333333333,7.33333333);
\draw [line width=1.5pt] (8,2)-- (7.333333333,1.33333333);
\draw [line width=2pt] (8,2)-- (9.3333333333,2);

\draw (2,6) node {$\times$};
\draw (6,4) node {$+$};

\draw (2,7) node {$2$};
\draw (8.5,2.5) node {$2$};

\end{tikzpicture}
&
\begin{tikzpicture}[line cap=round,line join=round,>=triangle 45,x=0.5cm,y=0.5cm]
\clip(0,0) rectangle (10,10);
\draw [line width=0.5pt] (2,4)-- (0,10);
\draw [line width=0.5pt] (2,4)-- (10,0);
\draw [line width=0.5pt] (0,10)-- (8,6);
\draw [line width=0.5pt] (10,0)-- (8,6);

\draw [line width=2pt] (1.33333333333,6)-- (4,6);
\draw [line width=1.5pt] (4,6)-- (8,2);
\draw [line width=1.5pt] (4,6)-- (5.333333333,7.33333333);
\draw [line width=1.5pt] (8,2)-- (7.333333333,1.33333333);
\draw [line width=2pt] (8,2)-- (9.3333333333,2);

\draw (5,7) node {$+$};
\draw (6,4) node {$+$};

\draw (2,7) node {$2$};
\draw (8.5,2.5) node {$2$};

\end{tikzpicture}
\\
$(a)$ & $(b)$ \\
\end{tabular}
\caption{\label{figure computation multiplicity examples different marking}Two tropical curves with different marking.}
\end{center}
\end{figure}
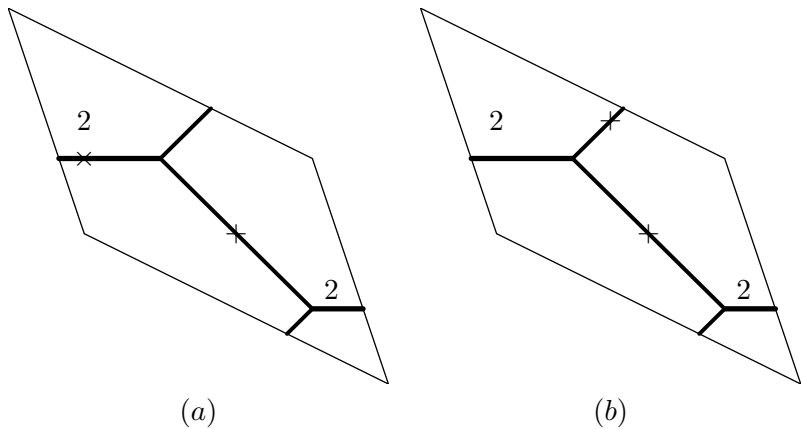

The gcd $\delta$ of the weights of the edges is not necessarily constant on the moduli space $\M_g(\TT A,C)$, nor for the solutions of the enumerative problems. However, it is constant on the connected components of the complement of some codimension $1$ substrata. And this suffices to provide invariance for the counts of solutions with a fixed gcd. The proof Theorem \ref{theorem multiplicity product} is postponed to section \ref{section proof multiplicity product}.

\medskip

Just as in the usual planar case, it is natural to try to replace the vertex multiplicity $m_V$ by its quantum analog $[m_V]_q=\frac{q^{m_V/2}-q^{-m_V/2}}{q^{1/2}-q^{-1/2}}$, in an attempt to obtain tropical refined invariants similar to Block-G\"ottsche refined invariants for toric surfaces. This justifies the definition of the refined tropical multiplicities from Definition \ref{definition multiplicity tropical curve}.

\medskip

Recall that we defined several refined tropical counts: $BG_{g,C,k}(\TT A,\P)$, $R_{g,C}(\TT A,\P)$ and $BG_{g,C}(\TT A,\P)$. We then have the following invariance statements:

\begin{theo}\label{theorem point invariance}
The refined count $BG_{g,C,k}(\TT A,\P)$ does not depend on the choice of $\mathcal{P}$ as long as it is generic.
\end{theo}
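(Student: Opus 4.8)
The plan is to show that $BG_{g,C,k}(\TT A,\P)$ is locally constant on the open dense set of generic configurations and is preserved across each codimension one wall of nongeneric ones. Since the configuration space $(\TT A)^g$ is connected and the nongeneric locus is a finite polyhedral complex of codimension at least $1$, any two generic configurations can be joined by a path crossing the walls transversally and avoiding the codimension $\geq 2$ strata, so these two facts give the theorem. On a chamber the solution curves stay simple, rigid and with tree complement (Proposition \ref{proposition complement tree}); their combinatorial types, hence the multiplicities $m^q_\Gamma=\prod_V[m_V]_q$ and the set of solutions, are constant, so $BG_{g,C,k}$ is constant there. The whole content is therefore the analysis of a single generic wall-crossing.

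First I would transport the wall analysis to the plane. By the cutting/lifting procedure of Section \ref{section lifting} together with Proposition \ref{proposition deformation}, a solution unfolds to a rational planar curve $\widetilde{\Gamma}\to N_\RR$ with $2g$ ends whose moments are pinned by $\TT A$, and moving $\P$ in a generic one-parameter family produces exactly one curve $\Gamma_0$ that fails rigidity minimally: by Lemma \ref{lemma deformation of a cycle} the complement of the marked points acquires a single cycle, so $\Gamma_0$ carries a one-dimensional \emph{string} of deformations fixing $\P$ that degenerates through one four-valent vertex $V_0$. The crucial point, where superabundancy must be kept under control, is that the moment (Menelaus) constraints attached to the ends and loops of $\widetilde{\Gamma}$ are untouched by the purely local modification at $V_0$, so they persist throughout the crossing; this reduces the situation to the ordinary planar local model.

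At $V_0$, let $v_1,v_2,v_3,v_4$ be the adjacent weighted slopes, with $v_1+v_2+v_3+v_4=0$, and set $d_{ij}=|\det(v_i,v_j)|$. The three ways of splitting $V_0$ into two trivalent vertices give the products $[d_{12}]_q[d_{34}]_q$, $[d_{13}]_q[d_{24}]_q$ and $[d_{14}]_q[d_{23}]_q$. As $\P$ crosses the wall the string gets pinned: on one side $\Gamma_0$ resolves to a single solution and on the other side to two distinct solutions, their refined multiplicities being these three products in some order. Invariance of $BG_{g,C,k}$ is then exactly the quantum identity that the largest of the three products equals the sum of the other two, for instance
$$[d_{14}]_q[d_{23}]_q=[d_{12}]_q[d_{34}]_q+[d_{13}]_q[d_{24}]_q,$$
which is the refined Block--G\"ottsche identity of Itenberg--Mikhalkin and follows from the addition rule for quantum integers; specializing $q=1$ gives the numerical statement.

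Finally I would check that $\delta_\Gamma$ is constant through the crossing, so that the identity stays inside the $\delta_\Gamma=k$ sector. Only the edges meeting $V_0$ are reconnected; the four adjacent edges persist with unchanged weights, and since $\delta_\Gamma$ divides each $v_i$ it divides every internal slope $v_i+v_j$, hence the weight of each reconnected edge. Thus all curves entering the crossing share the same gcd, and the fixed-gcd count is well defined on both sides. The main obstacle is the second step: verifying that the global string degeneration in this superabundant, nonplanar setting decomposes precisely into the local four-valent picture, with the three resolutions matched correctly to the solutions on the two sides and the moment constraints preserved, so that the Itenberg--Mikhalkin identity applies verbatim.
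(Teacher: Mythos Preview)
Your overall strategy coincides with the paper's: connect two generic configurations by a path, lift solutions to planar curves via the cutting procedure, invoke the local planar invariance of Itenberg--Mikhalkin across walls, and check that the gcd is preserved. The paper also distinguishes whether the loop $\lambda_p$ of the moving marked point is zero (a cycle moves in the lift) or nonzero (a chord moves), but this is mostly bookkeeping.

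The gap is that your wall analysis treats only one of the three local models. The paper lists three: (c) a four-valent vertex appears, (b) a cycle flattens into a pair of parallel edges joining two four-valent vertices, and (a) a marked point meets a trivalent vertex. Your proposal handles only (c), and your assertion that the string ``degenerates through one four-valent vertex $V_0$'' is simply not true in general. For (a) and (b) the quantum Pl\"ucker identity you quote is not the relevant mechanism; in case (a) the two valid adjacent types have \emph{equal} refined multiplicity and the third type is forbidden, while case (b) has its own analysis in \cite{itenberg2013block}.

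More importantly, there is a torus-specific wall that your reduction to $N_\RR$ hides rather than resolves. In your lift the marked points of $\P$ double as the lifting set $\Q$, so some of them sit on unbounded ends of $\widetilde{\Gamma}$. When such a marked point $q$ meets a vertex, one of the three adjacent positions for $q$ makes $\Gamma\backslash h^{-1}(\P)$ fail to be simply connected, hence is not a valid solution at all; the other two positions give curves of identical multiplicity on the two sides of the wall. This is precisely the step where the superabundant/nonplanar nature of the problem intervenes, and it is not covered by the planar statement of \cite{itenberg2013block} because there the marked points are free to sit on any edge. The paper handles this by re-choosing the lifting set near $q$ so that the two admissible types visibly correspond under the wall; your proposal, by contrast, asserts that ``the moment constraints are untouched by the purely local modification at $V_0$'', which is exactly what fails here. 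You need to add this case to complete the argument.
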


\begin{coro}
The counts $N_{g,C,k}^\trop(\TT A,\P)$, $R_{g,C}(\TT A,\P)$, $BG_{g,C}(\TT A,\P)$, $M_{g,C}(\TT A,\P)$ and $N_{g,C}(\TT A,\P)$ do not depend on the choice of $\mathcal{P}$ as long as it is generic.
\end{coro}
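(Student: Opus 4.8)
The plan is to deduce the corollary directly from Theorem~\ref{theorem point invariance}, which asserts that for each divisor $k\mid\delta(C)$ the refined count $BG_{g,C,k}(\TT A,\P)$ is independent of the generic configuration $\P$. The key observation is that every count appearing in the corollary is built from the finite family $\bigl(BG_{g,C,k}(\TT A,\P)\bigr)_{k\mid\delta(C)}$ by a procedure that does not involve $\P$: either specializing the variable $q$, or forming a linear combination with coefficients intrinsic to $C$ and $g$, or summing over the divisors $k$ of $\delta(C)$. Since each of these operations preserves $\P$-independence, the corollary follows formally.

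First I would record the relation between the unrefined and refined counts with fixed gcd. As $[m_V]_q=\frac{q^{m_V/2}-q^{-m_V/2}}{q^{1/2}-q^{-1/2}}$ specializes to $m_V$ at $q=1$, the refined multiplicity $m^q_\Gamma$ specializes to $m_\Gamma$, whence
$$N^\trop_{g,C,k}(\TT A,\P)=BG_{g,C,k}(\TT A,\P)\big|_{q=1}.$$
The right-hand side is independent of $\P$ by Theorem~\ref{theorem point invariance}, and a Laurent polynomial that does not depend on $\P$ still does not depend on $\P$ after the substitution $q=1$; hence $N^\trop_{g,C,k}(\TT A,\P)$ does not depend on $\P$. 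Next I would invoke the decompositions recalled in the statement of results, namely
$$
\begin{aligned}
M_{g,C} &=\sum_{k\mid\delta(C)}N^\trop_{g,C,k}, & N_{g,C} &=\sum_{k\mid\delta(C)}k\,N^\trop_{g,C,k},\\
BG_{g,C} &=\sum_{k\mid\delta(C)}BG_{g,C,k}, & R_{g,C} &=\sum_{k\mid\delta(C)}k\,BG_{g,C,k},
\end{aligned}
$$
in which the index set and the coefficients ($1$ or $k$) depend only on $g$ and $C$. Each of these is thus a fixed $\ZZ$- (resp. $\ZZ[q^{\pm1/2}]$-) linear combination of the $\P$-independent quantities $N^\trop_{g,C,k}(\TT A)$ and $BG_{g,C,k}(\TT A)$, and is therefore itself independent of $\P$.

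There is no genuine obstacle in this argument: all of the content is carried by Theorem~\ref{theorem point invariance}, and the corollary is merely the observation that $\P$-independence is stable under the three operations above. The only point deserving a word of care is that the divisor set $\{k:k\mid\delta(C)\}$ and the coefficients in each decomposition are determined by $C$ and $g$ alone, so that the identities hold uniformly in $\P$ and the conclusion is not an artifact of a $\P$-dependent regrouping of terms.
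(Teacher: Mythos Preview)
Your argument is correct and matches the paper's own justification: the corollary is deduced from Theorem~\ref{theorem point invariance} by observing that each listed count is a fixed linear combination of the $BG_{g,C,k}(\TT A,\P)$, specializing at $q=1$ for the integer-valued ones. The paper states this in one line rather than spelling out the decompositions, but the content is identical.
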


The proof is in section \ref{section invariance points}. The corollary follows from the expression of the respective enumerative counts in terms of $BG_{g,C,k}(\TT A,\P)$, which is invariant by Theorem \ref{theorem point invariance}, evaluating at $q=1$ for the integer valued counts. We remove the $\P$ from the notation to denote the corresponding invariants. We now partially get rid of the dependence in $\TT A$.

\begin{theo}\label{theorem torus invariant}
The refined invariant $BG_{g,C,k}(\TT A)$ does not depend on $\TT A$ as long as $\TT A$ contains curves in the class $C$ and is chosen generically among them.
\end{theo}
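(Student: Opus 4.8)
The plan is to run the same wall-crossing argument as in Theorem~\ref{theorem point invariance}, but with the matrix $S$ of the inclusion $\Lambda\hookrightarrow N_\RR$ playing the role of the moving parameter in place of the point configuration $\P$. It suffices to treat each gcd $k$ separately, and the scaling relation $BG_{g,C,k}(\TT A,\P)=[k^2]_q^{2g-2}BG_{g,C/k,1}(\TT A,\P)$ shows one may even reduce to $k=1$. I parametrize tropical tori by $S$, so that the realizable locus is the linear subspace $V_C=\{S\in\M_2(\RR):CS^T\in\S_2(\RR)\}$, of codimension $1$ in $\M_2(\RR)$. For a class $C'$ not proportional to $C$, the set $V_C\cap V_{C'}$ along which $C'$ also becomes realizable is a proper linear subspace of $V_C$, hence a codimension $1$ wall; the complement of the union of these walls breaks into connected components, and ``generic'' means $S$ lying in a fixed such chamber. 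I will show that $BG_{g,C,k}$ is constant on each chamber.

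The role of the chamber is to control reducible curves. Inside a chamber no class that is not proportional to $C$ is realizable, so no genus $g$ curve in the class $C$ can degenerate to a reducible curve whose components lie in such classes; this is precisely the phenomenon, described after the statement, that makes the full count $BG_{g,C}(\TT A)$ jump when one crosses a wall $V_{C'}$ and a family of irreducible curves collapses onto a reducible one. Reducible configurations whose components lie in classes proportional to $C$ never contribute to $BG_{g,C,k}$, which counts only irreducible curves, and the key point, already exploited for point invariance, is that the gcd is a locally constant quantity preserved through every deformation and resolution met inside a chamber, so that the stratum of irreducible curves of a fixed gcd is itself deformation-stable.

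Now connect two generic tori $S_0,S_1$ of the chamber by a generic path $S_\tau$, and for each $\tau$ choose a configuration $\P_\tau$ generic for $\TT A(S_\tau)$. By Theorem~\ref{theorem point invariance} the quantity $F(\tau)=BG_{g,C,k}(\TT A(S_\tau),\P_\tau)$ does not depend on the choice of $\P_\tau$, and it is locally constant away from the finitely many $\tau$ at which $(S_\tau,\P_\tau)$ fails to be generic. The slopes of the edges, the vertex multiplicities $m_V=|\det(a_V,b_V)|$ and therefore the refined weights $[m_V]_q$ are integers, hence unaffected by the continuous motion of $S$; consequently the only codimension $1$ degenerations are the two familiar ones, namely a marked point crossing a vertex, and two trivalent vertices colliding into a four-valent vertex. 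The former alters neither the set of curves nor their multiplicities. For the latter, the variation of $S$ enters only through the affine moment constraints $\det(n_i,e_i)+\det(-n_i,f_i)=\det(n_i,S\lambda_{p_i})$ of Proposition~\ref{proposition deformation}, which vary continuously and leave the local combinatorics at the four-valent vertex untouched; the local model is thus identical to the one crossed when moving $\P$, and the Block-G\"ottsche relation, equating the sum of the refined multiplicities $\prod_V[m_V]_q$ over the admissible resolutions on the two sides of the wall, yields no jump. Hence $F$ is constant and $F(0)=F(1)$.

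The delicate part is the transversality underlying the previous paragraph: one must choose $S_\tau$ and $\P_\tau$ so that along the way only a single curve degenerates at a time and only through the two admissible codimension $1$ models, which requires ruling out simultaneous degenerations and checking that the incidence variety of triples $(S,\P,\Gamma)$ is cut out transversally. The genuinely new input beyond Theorem~\ref{theorem point invariance} is the interplay with reducibility, and I expect the main obstacle to be proving that inside a chamber an irreducible curve of fixed gcd can disappear only through an admissible four-valent degeneration, equivalently that the reducible loci which could absorb it are confined to the walls $V_{C'}$ bounding the chamber. Granting this, torus invariance follows from point invariance verbatim.
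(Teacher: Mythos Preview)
Your argument establishes at best that $BG_{g,C,k}$ is constant on each connected component of the generic locus in $V_C$, but not that it is the same across components. The walls $V_C\cap V_{C'}$ are genuine linear hyperplanes in $V_C$, so their complement is disconnected, and a path between two generic $S_0,S_1$ will in general be forced to cross them. The paper's proof does not stay inside a chamber: it takes a path $S_t$ that may hit non-generic values $S_{t_*}$ and analyses exactly what you excise, namely the reducible curves $\Gamma_1\sqcup\cdots\sqcup\Gamma_r$ with components in classes $C_i$ not proportional to $C$. The key step you are missing is to show that each such reducible curve, together with a choice of a tree $\T$ of intersection points among the $\Gamma_i$, smooths to an irreducible curve on either side of the wall, and that the two smoothings have the same refined multiplicity because every chosen node becomes a pair of trivalent vertices of equal multiplicity. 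Without this bijection across the wall there is no reason the counts on the two adjacent chambers should agree.

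A second, smaller point: inside a chamber your argument is more elaborate than needed. The paper observes that if $\P_t$ is chosen generic in $\TT A_t$ for every $t$, then all irreducible solutions are simple and rigid for every $t$; a small change of $S$ perturbs the moment constraints on the lifted planar curve, and regularity forces the solution to deform within its combinatorial type. So no four-valent or marked-point-on-vertex walls are met for irreducible curves along the path, and there is no need to invoke the local Block--G\"ottsche identity there. The only phenomenon that must be handled is precisely the reducible wall-crossing you tried to avoid.
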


\begin{coro}
The invariants $N_{g,C,k}^\trop(\TT A)$, $R_{g,C}(\TT A)$, $BG_{g,C}(\TT A)$, $M_{g,C}(\TT A)$ and $N_{g,C}(\TT A)$ do not depend on the choice of $\TT A$ as long as $\TT A$ contains curves in the class $C$ and is chosen generically among them.
\end{coro}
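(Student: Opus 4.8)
The plan is to deduce every assertion of the corollary directly from Theorem \ref{theorem torus invariant} by writing each of the five counts as a combination of the refined fixed-gcd counts $BG_{g,C,k}$, the index $k$ ranging over the $\TT A$-independent set of divisors of $\delta(C)$. The only inputs are the definitions collected in the results section together with the one arithmetic fact about the refined multiplicity, so no new geometry is needed; I expect the entire difficulty to have been absorbed into Theorem \ref{theorem torus invariant} itself.

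First I would record the two elementary relations linking the counts. Specializing the refined multiplicity at $q=1$ replaces each quantum integer $[m_V]_q$ by $m_V$, so that $m^q_\Gamma|_{q=1}=m_\Gamma$ and hence $N^{\trop}_{g,C,k}(\TT A)=BG_{g,C,k}(\TT A)\big|_{q=1}$; the notation $BG_{g,C,k}(\TT A)$ without $\P$ is legitimate by the already-established Theorem \ref{theorem point invariance}. From the definitions of the remaining counts one then reads off
\begin{align*}
BG_{g,C}(\TT A) &= \sum_{k\mid\delta(C)} BG_{g,C,k}(\TT A), &
R_{g,C}(\TT A) &= \sum_{k\mid\delta(C)} k\,BG_{g,C,k}(\TT A),\\
M_{g,C}(\TT A) &= BG_{g,C}(\TT A)\big|_{q=1}, &
N_{g,C}(\TT A) &= R_{g,C}(\TT A)\big|_{q=1}.
\end{align*}
Thus every count in the corollary is obtained from the family $\{BG_{g,C,k}(\TT A)\}_{k\mid\delta(C)}$ by a finite $\ZZ[q^{\pm1/2}]$-linear combination with coefficients $1$ or $k$, followed in two cases by the substitution $q=1$.

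Finally I would invoke Theorem \ref{theorem torus invariant}: for each $k\mid\delta(C)$ the element $BG_{g,C,k}(\TT A)$ is one and the same for every generic $\TT A$ carrying curves in the class $C$. Since the index set $\{k:k\mid\delta(C)\}$ and the coefficients do not involve $\TT A$, and since both forming a finite linear combination and specializing at $q=1$ preserve the property of being a fixed element independent of $\TT A$, each of $N^{\trop}_{g,C,k}$, $R_{g,C}$, $BG_{g,C}$, $M_{g,C}$ and $N_{g,C}$ is likewise independent of the generic $\TT A$, which is the corollary. The only point worth flagging is that genericity is essential, and this is consistent with rather than contradicted by the discussion following Theorem \ref{theorem torus invariant}: there $BG_{g,C}$ is seen to take a different value on a non-generic $\TT A_0$ on which $C$ decomposes as a sum of non-proportional realizable classes, simply because a curve that is irreducible for the nearby generic tori degenerates to a reducible one on $\TT A_0$ and ceases to be counted. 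That discrepancy concerns the passage to a non-generic torus and so falls outside the scope of the present statement.
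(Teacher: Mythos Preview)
Your proof is correct and follows exactly the approach the paper intends: expressing each of the five counts as a finite $\TT A$-independent combination of the $BG_{g,C,k}(\TT A)$ (over $k\mid\delta(C)$), with evaluation at $q=1$ for the integer-valued ones, and then invoking Theorem \ref{theorem torus invariant}. The paper does not spell this out for the second corollary, but the sentence following the analogous corollary to Theorem \ref{theorem point invariance} makes clear this is precisely the intended argument.
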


The proof is in section \ref{section invariance surface}. Notice that if $\TT A$ is not chosen generically, we still have invariants for $\TT A$ since the proof of Theorem \ref{theorem point invariance} does not require a generic choice of $\TT A$.

\begin{rem}
If $\TT A$ is not chosen generically, the class $C$ might decompose as a sum $C_1+C_2$. Then, there are solutions which are reducible curves having two components in the classes $C_1$ and $C_2$. For instance, in the computation of the invariants of \cite{bryan1999generating}, the authors choose the abelian surface to be a product of two elliptic curves $S_1\times S_2$ and count curve in the class $[S_1] + n[S_2]$. All the curves in this class are reducible with all their components being genus $1$ curves. Tropically, this would mean that all the tropical curve would have refined multiplicity $1$ since there are no vertices. Curves in the classes $C_1$ and $C_2$ are not deformable along a deformation of $\TT A$, since the intersection points between the components would become a pair of trivalent vertices. The only classes that continue to exist are the one colinear with $C$.
\end{rem}

Due to the preceding remark, the refined invariant depends on the choice of the matrix $S$, and more precisely on cone of realizable classes inside the the group $\{C\in \Lambda\otimes N \text{ such that }C S^T\in\S_2(\RR)\}$.

\begin{expl}
Consider the class $C=\begin{pmatrix}
1 & 0 \\
0 & 1 \\
\end{pmatrix}$, which are curves as depicted on Figure \ref{figure example tropical curves} $(a)$. If the torus $\TT A$ is not chosen generically, the genus $2$ curve might decompose as a union of two tropical elliptic curves.There are no trivalent vertices anymore. Thus, we do not have any irreducible curve to count anymore, and the value of the invariant is now $0$, while it is $1$ for a generic choice of $\TT A$.
\end{expl}

\section{Proof of the results}

\subsection{Multiplicity as a product}
\label{section proof multiplicity product}

\begin{proof}[Proof of Theorem \ref{theorem multiplicity product}]
For each edge $e\in E(\Gamma')$ directed by some primitive vector $u_e$, we use $\det(u_e,-)$ as a coordinate on $N/N_e$. The map $\Theta$ starts from a lattice of rank $2|V(\Gamma')|=6g-4$ to a lattice of rank $|E(\Gamma')|+g=6g-3$. It is not surjective. In fact, its image always lie in the kernel of the following linear form defined on the codomain:
$$\varphi(\phi_e,p_i)=\sum_e \frac{w_e}{\delta}\phi_e,$$
where $\delta$ is the gcd of the curve. Indeed, for each $v\in V(\Gamma')$, we have
$$\varphi\circ\Theta(p_v)=\frac{1}{\delta}\left( w_1 \det(u_1,\varepsilon_1 p_v)+ w_2 \det(u_2,\varepsilon_2 p_v) + w_3 \det(u_3,\varepsilon_3 p_v)\right),$$
where $\varepsilon_i=\pm 1$ whether $p_v$ is the head or the tail of $e_i$. This is $0$ by the balancing condition. This does not happen in the planar case due to the presence of unbounded ends. As $\Theta$ is injective, taking a basis of the domain and codomain, we have a short exact sequence,
$$0\to \ZZ^{6g-4}\xrightarrow{\Theta} \ZZ^{6g-3}\to \ZZ\oplus G\to 0,$$
where $G$ is a torsion group. We want to compute the cardinal of this torsion group, because
$$0\to \mathrm{Tor}(G,\CC^*)\to (\CC^*)^{6g-4}\xrightarrow{\Theta_{\CC^*}} (\CC^*)^{6g-3}\to \CC^*\to 0.$$
In fact, consider the exterior power $\Lambda^{6g-4}\Theta:\Lambda^{6g-4}\ZZ^{6g-4}\simeq\ZZ\to \Lambda^{6g-4}\ZZ^{6g-3}\simeq\ZZ^{6g-3}$. It maps a generator of $\Lambda^{6g-4}\ZZ^{6g-4}$ to some multiple of the Pl\"ucker vector of the $\ker\varphi$, in which the image of the map lies. We are in fact interested in the absolute value of the proportionality constant. Let us rephrase it. In suitable basis, the matrix of $\Theta$ has the following form:
$$\begin{pmatrix}
d_1 & 0 & \cdots & 0 \\
0 & d_2 & & \vdots \\
\vdots & & \ddots & \vdots \\
0 & \cdots & \cdots & d_{6g-4} \\
0 & \cdots & \cdots & 0 \\
\end{pmatrix}.$$
The cokernel of $\Theta$ is $\ZZ\oplus G$ where $G=\bigoplus_1^{6g-4}\ZZ/d_i\ZZ$, and $|G|=\prod d_i$. The map $\varphi$ sends a point in $\ZZ^{6g-3}$ to its last coordinate. The Pl\"ucker vector of $\ker\varphi$ is $e_1\wedge\dots\wedge e_{6g-4}\in\Lambda^{6g-4}\ZZ^{6g-3}$, and the image of the generator of $\Lambda^{6g-4}\ZZ^{6g-4}$ is $\bigwedge (d_i e_i)=\left(\prod d_i\right)\bigwedge e_i$. We are interested in the constant $d_1\cdots d_{6g-4}$, which is both the cardinal of the torsion part $G$ of the cokernel, and the integral length of the image of a generator of $\Lambda^{6g-4}\ZZ^{6g-4}$ by $\Lambda^{6g-4}\Theta$. If the matrix is not given in bases that make it a diagonal matrix, the integer $\prod d_i$ is equal to the gcd of the maximal minors of the matrix.

\medskip

As there is a relation between the coordinates corresponding to $\bigoplus_{E(\Gamma')}N/N_e$, any maximal minor obtained by forgetting a row from the coordinates of $\bigoplus_1^g N$ is $0$. The only non-zero maximal minors come from forgetting one of the edges of $\Gamma'$. Let $e_0$ be such an edge. The matrix of $\Theta$ contains blocks of the following form:
$$\begin{array}{|c|}
\hline
\det(u_e,-) \\
\hline
I_2 \\
\hline
\det(u_e,-) \\
\hline
\end{array},$$
where columns are the coordinates of a marked point $p$, and rows are its position, and the coordinates corresponding to the two adjacent edges. The block $I_2$ is the only non-zero block on the row of the matrix of $\Theta$. In particular, we can use it to develop each maximal minors with respect to these rows. Then, as the complement of $\P$ inside the curve $\Gamma$ is a tree, we can assume that the edges have been oriented toward the edge whose row we are forgetting. We have already develop the minor with respect to the rows of the marked points. Now, let $V$ be a vertex adjacent to two leafs of the tree, with primitive directing vectors $u_1$ and $u_2$, and outgoing slope $u_e$. The balancing condition is $w_e u_e=w_1u_1+w_2u_2$. The corresponding block is
$$\begin{array}{|c|}
\hline
\det(u_e,-) \\
\hline
\det(u_1,-) \\
\hline
\det(u_2,-) \\
\hline
\end{array},$$
where columns are coordinates for the copy of $N$ corresponding to $V$, and the rows to the adjacent edges. As $V$ is adjacent to two leafs, the last two rows of the block are the only non-zero elements in the row of the minor. Thus, we can develop with respect to these two rows and get the smaller determinant where the vertex has been deleted, and the determinant multiplied by $\det(u_1,u_2)=\frac{m_V}{w_1w_2}$. In the end, we get $\frac{1}{\prod_{e\neq e_0} w_e}\prod_V m_V=\frac{w_{e_0}}{\prod_e w_e}\prod_V m_V$.

\medskip

Recall $\delta$ is the gcd of the weights of the edges. Assume that $\delta=1$. By Bezout's theorem, there exists some integer numbers $\alpha_e$ such that $\sum \alpha_e w_e=1$. Thus, we have
$$\sum \alpha_{e_0}\frac{w_{e_0}}{\prod_{e\in E(\Gamma')}w_e}\prod_V m_V=\frac{1}{\prod_{e\in E(\Gamma')}w_e}\prod_V m_V,$$
which is also an integer. The gcd of the $\frac{w_{e_0}}{\prod_{e\in E(\Gamma')}w_e}\prod_V m_V$ is then equal to $\frac{1}{\prod_{e\in E(\Gamma')}w_e}\prod_V m_V$. Multiplying by $\prod_{e\in E(\Gamma')}w_e$, we get the result. If $\delta>1$, we consider the tropical curve where all the weights have been divided by $\delta$. It has the same matrix $\Theta$ and the same minors. Let $w_e=\delta \widehat{w_e}$, where $\widehat{w_e}$ are the weights of the curve divided by $\delta$. For a vertex $V$, its multiplicity in the curve divided by $\delta$ is $\widehat{m_V}=\frac{m_V}{\delta^2}$. The value of the minors are
$$\frac{\widehat{w_{e_0}}}{\prod_{e\in E(\Gamma')}\widehat{w_e}}\prod_V \widehat{m_V}.$$
As in the case $\delta=1$, their gcd is 
$$\frac{1}{\prod_{e\in E(\Gamma')}\widehat{w_e}}\prod_V \widehat{m_V}=\delta\frac{1}{\prod_{e\in E(\Gamma')}w_e}\prod_V m_V,$$
as in Remark \ref{remark mult by delta}. Multiplying by the weight of the bounded edges, the result follows.
\end{proof}

\begin{rem}
Following \cite{nishinou2020realization}, the cardinal of $\mathrm{Coker}\Theta$ is equal to the number a possible phases of the tropical curve. The computation of the multiplicity done in the proof is more in the spirit of \cite{nishinou2006toric}. It should also be possible to compute to be $\delta\frac{\prod m_V}{\prod w_e}$ in a way similar to the computation of the multiplicity in the planar setting from \cite{mikhalkin2005enumerative}, by directly computing the possible number of possible phases on the edges.
\end{rem}

\subsection{Independence of the choice of the points}
\label{section invariance points}

Before getting to the proof of invariance, we notice the following. The invariance for the enumerative problem without imposing the gcd of the curves can be deduced from the invariance of the enumerative problems where the gcd is involved. Conversely, a curve of gcd $\delta$ in the class $C$ is obtained by multiplying by $\delta$ a primitive curve in the class $C/\delta$. Therefore, initializing with primitive classes $C$ for which all the curves realizing the class are also primitive, one can also recover the invariance of the counts with a fixed gcd from the invariance of the global enumerative problem.

\begin{proof}[Proof of Theorem \ref{theorem point invariance}]
Let $(\mathcal{P}_s)_{s\in [0;1]}$ be a generic path between two generic point configurations $\mathcal{P}_0$ and $\mathcal{P}_1$ inside the tropical torus $\TT A=N_\RR/\Lambda$. By composing different paths, we can always assume that only one of the points in the configuration moves. We intend to prove the invariance for the refined count $BG_{g,C,k}(\TT A,\P_s)$ when moving one point.

\medskip

If $\mathcal{P}_s$ is a generic configuration, and $\Gamma$ is a simple tropical curve with gcd $k$ passing through $\mathcal{P}_s$, considering as lifting set $\{p\in\P_s:\lambda_p\neq 0\}$, it is easy to see that a small deformation of the point configuration leads to a small deformation of the curve $\Gamma$ which stays in the same combinatorial type. In fact, only the cycle $\lambda_{p_s}$ moves, where $p_s$ is the moving point, as depicted on Figure \ref{figure point deformation cycle}. As the multiplicity is constant on the combinatorial type, we get that the refined count of solutions is locally invariant. We now have to check what happens when the point configuration crosses a \textit{wall}, where the configuration $\P_s$ is not generic anymore.

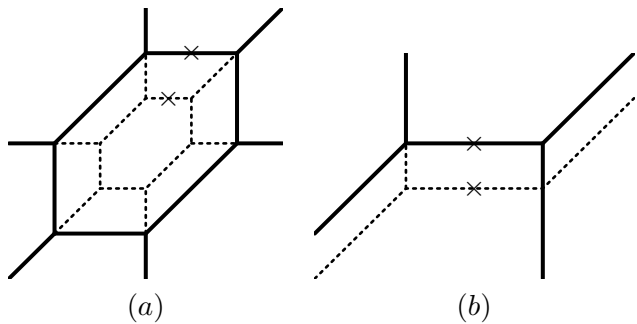
\begin{figure}
\begin{center}
\begin{tabular}{cc}
\begin{tikzpicture}[line cap=round,line join=round,>=triangle 45,x=0.6cm,y=0.6cm]
\clip(0,0) rectangle (6,6);
\draw [line width=1.5pt] (1,1)-- ++(2,0)-- ++(2,2)-- ++(0,2)-- ++(-2,0)-- ++(-2,-2)-- ++(0,-2);
\draw [line width=1pt,dotted] (2,2)-- ++(1,0)-- ++(1,1)-- ++(0,1)-- ++(-1,0)-- ++(-1,-1)-- ++(0,-1);
\draw [line width=1.5pt] (0,0)-- ++(1,1);
\draw [line width=1.5pt] (5,5)-- ++(1,1);
\draw [line width=1.5pt] (3,0)-- ++(0,1);
\draw [line width=1.5pt] (3,5)-- ++(0,1);
\draw [line width=1.5pt] (0,3)-- ++(1,0);
\draw [line width=1.5pt] (5,3)-- ++(1,0);

\draw [line width=1pt,dotted] (1,1)-- ++(1,1);
\draw [line width=1pt,dotted] (4,4)-- ++(1,1);
\draw [line width=1pt,dotted] (3,1)-- ++(0,1);
\draw [line width=1pt,dotted] (3,4)-- ++(0,1);
\draw [line width=1pt,dotted] (1,3)-- ++(1,0);
\draw [line width=1pt,dotted] (4,3)-- ++(1,0);

\draw (4,5) node {$\times$};
\draw (3.5,4) node {$\times$};
\end{tikzpicture}
&
\begin{tikzpicture}[line cap=round,line join=round,>=triangle 45,x=0.6cm,y=0.6cm]
\clip(0,0) rectangle (7,5);

\draw [line width=1.5pt] (0,1)-- ++(2,2)-- ++(3,0)-- ++(2,2);
\draw [line width=1.5pt] (2,3)-- ++(0,2);
\draw [line width=1.5pt] (5,0)-- ++(0,3);

\draw [line width=1pt,dotted] (0,0)-- ++(2,2)-- ++(3,0)-- ++(2,2);
\draw [line width=1pt,dotted] (2,2)-- ++(0,1);

\draw (3.5,2) node {$\times$};
\draw (3.5,3) node {$\times$};
\end{tikzpicture} \\
$(a)$ & $(b)$ \\
\end{tabular}
\caption{\label{figure point deformation cycle}Displacement of a point and the corresponding deformation of the cycle that it contains.}
\end{center}
\end{figure}

\medskip

When the point configuration becomes non-generic, we have at least a quadrivalent vertex that appears. We notice that when a point moves, the only part of the curve that moves is the cycle $\lambda_p$ containing it, and the edges adjacent to it vary their lengths. Let $p$ be the moving marked point. We distinguish two cases according to whether $\lambda_p=0$ or $\lambda_p\neq 0$.

\begin{itemize}[label=$\bullet$]
\item If $\lambda_p=0$, then the deformation only affects the cycle of the lifted curve $\tilde{h}:\widetilde{\Gamma}\to N_\RR$. The walls are dealt with using the invariance statement from \cite{itenberg2013block}. However, to be able to glue back the unbounded ends and get a curve inside $\TT A$, we have to assume tat the marked points sitting on the unbounded ends stay on the unbounded ends. In this case, the invariance exactly amounts to the local invariance statement from \cite{itenberg2013block}. One has the wall corresponding to a quadrivalent vertex, a marked point (not belonging to an unbounded end) meeting a vertex, or a cycle degenerating to a pair of parallel edges linking two quadrivalent vertices. These walls are depicted on Figure \ref{figure walls point deformation}.

\medskip

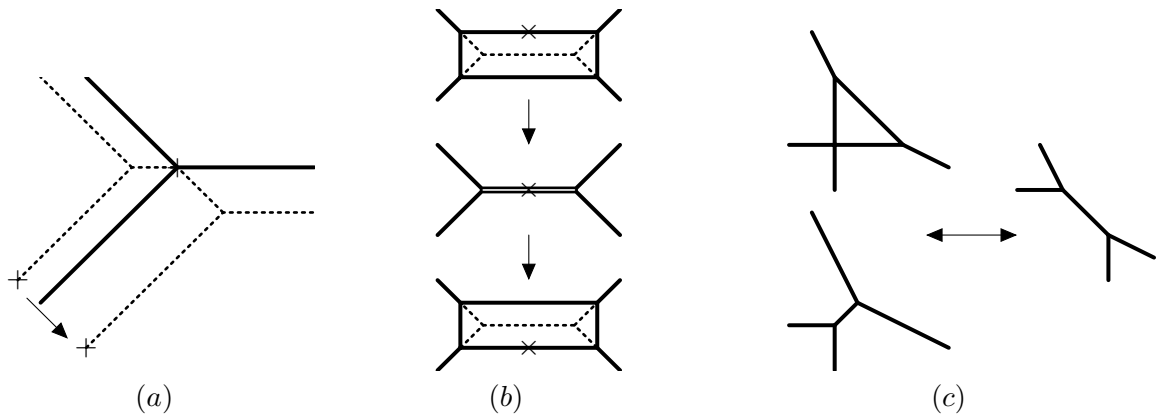
\begin{figure}[h]
\begin{center}
\begin{tabular}{ccc}
\begin{tikzpicture}[line cap=round,line join=round,>=triangle 45,x=0.6cm,y=0.6cm]
\clip(0,-0.5) rectangle (7,6);
\draw [line width=1.5pt] (1,1)-- ++(3,3)-- ++(3,0);
\draw [line width=1.5pt] (4,4)-- ++(-2,2);
\draw [line width=1pt,dotted] (0.5,1.5)-- ++(2.5,2.5)-- ++(4,0);
\draw [line width=1pt,dotted] (2,0)-- ++(3,3)-- ++(2,0);
\draw [line width=1pt,dotted] (5,3)-- ++(-1,1);
\draw [line width=1pt,dotted] (3,4)-- ++(-2,2);

\draw [->] (0.7,1.1)-- (1.6,0.2);

\draw (4,4) node {$+$};
\draw (0.5,1.5) node {$+$};
\draw (2,0) node {$+$};
\end{tikzpicture}
&
\begin{tikzpicture}[line cap=round,line join=round,>=triangle 45,x=0.6cm,y=0.6cm]
\clip(-2,0) rectangle (5,8);

\draw [line width=1.5pt] (0,6)-- ++(0.5,0.5)-- ++(0,1)-- ++(3,0)-- ++(0,-1)-- ++(-3,0);
\draw [line width=1.5pt] (0.5,7.5)-- ++(-0.5,0.5);
\draw [line width=1.5pt] (3.5,7.5)-- ++(0.5,0.5);
\draw [line width=1.5pt] (3.5,6.5)-- ++(0.5,-0.5);
\draw [line width=1pt,dotted] (0.5,6.5)-- ++(0.5,0.5)-- ++(2,0)-- ++(0.5,-0.5);
\draw [line width=1pt,dotted] (1,7)-- ++(-0.5,0.5);
\draw [line width=1pt,dotted] (3,7)-- ++(0.5,0.5);

\draw [line width=1.5pt] (0,3)-- ++(1,1)-- ++(-1,1);
\draw [line width=1.5pt] (4,3)-- ++(-1,1)-- ++(1,1);
\draw [line width=1pt,double] (1,4)-- ++(2,0);

\draw [line width=1.5pt] (0,0)-- ++(0.5,0.5)-- ++(0,1)-- ++(3,0)-- ++(0,-1)-- ++(-3,0);
\draw [line width=1.5pt] (0.5,1.5)-- ++(-0.5,0.5);
\draw [line width=1.5pt] (3.5,1.5)-- ++(0.5,0.5);
\draw [line width=1.5pt] (3.5,0.5)-- ++(0.5,-0.5);
\draw [line width=1pt,dotted] (0.5,0.5)-- ++(0.5,0.5)-- ++(2,0)-- ++(0.5,-0.5);
\draw [line width=1pt,dotted] (1,1)-- ++(-0.5,0.5);
\draw [line width=1pt,dotted] (3,1)-- ++(0.5,0.5);

\draw [->] (2,6)-- (2,5);
\draw [->] (2,3)-- (2,2);

\draw (2,0.5) node {$\times$};
\draw (2,4) node {$\times$};
\draw (2,7.5) node {$\times$};
\end{tikzpicture} 
&
\begin{tikzpicture}[line cap=round,line join=round,>=triangle 45,x=0.6cm,y=0.6cm]
\clip(-2,0) rectangle (9,8);

\draw [line width=1.5pt] (1,1)-- ++(0,-1);
\draw [line width=1.5pt] (1,1)-- ++(-1,0);
\draw [line width=1.5pt] (1,1)-- ++(0.5,0.5);
\draw [line width=1.5pt] (1.5,1.5)-- ++(-1,2);
\draw [line width=1.5pt] (1.5,1.5)-- ++(2,-1);

\draw [line width=1.5pt] (1,4)-- ++(0,2.5)-- ++(1.5,-1.5)-- ++(-2.5,0);
\draw [line width=1.5pt] (1,6.5)-- ++(-0.5,1);
\draw [line width=1.5pt] (2.5,5)-- ++(1,-0.5);

\draw [line width=1.5pt] (5,4)-- ++(1,0)-- ++(1,-1)-- ++(0,-1);
\draw [line width=1.5pt] (6,4)-- ++(-0.5,1);
\draw [line width=1.5pt] (7,3)-- ++(1,-0.5);

\draw [<->] (3,3)-- ++(2,0);
\end{tikzpicture} \\
$(a)$ & $(b)$ & $(c)$ \\
\end{tabular}
\caption{\label{figure walls point deformation}Displacement of a point and the corresponding deformation across the different walls. On $(a)$, a marked point that coincides with a vertex, on $(b)$, the flattening of a cycle, and on $(c)$, the appearance of a quadrivalent vertex.}
\end{center}
\end{figure}

Let us deal with the remaining case. If a marked point $q$ sitting on an unbounded end meets a vertex, there are three adjacent combinatorial types according to which edge adjacent to the vertex contains the marked point. This corresponds to a wall of type $(a)$ on Figure \ref{figure walls point deformation}. On one of of them, the complement $\Gamma\backslash h^{-1}(\mathcal{P}_s)$ is not simply connected anymore, and therefore cannot happen. In other words, the marked point $q$ has to stay on the cycle $\lambda_q$, and then moves from to the bounded edge belonging to $\lambda_p\cap\lambda_q$. The multiplicity is the same on both sides, and the two adjacent combinatorial types contribute solutions for $\P_{s\pm\varepsilon}$. To represent the deformation, we choose in the cutting set not $q$ but some point on the same edge as $q$, on the other side of the edges regarding the moving vertex.

\item Now, assume that $\lambda_p\neq 0$. It means that now we move a \textit{chord} (path between two unbounded ends) in the lifted curve. We proceed similarly. As long as the chord does not meet any other vertex sitting on an unbounded end, the invariance follows from the statements of \cite{itenberg2013block}. If the chord meets some marked point $q$ on an unbounded end, we do not take $q$ in the lifting set anymore but some other point on the edge, not on the side of the chord. The chords $\lambda_p$ and $\lambda_q$ overlap on a segment, and the marked point $q$ stays on the chord $\lambda_q$. The multiplicities also stay the same on both combinatorial type, and they provide solutions for $\P_{s\pm\varepsilon}$ respectively.
\end{itemize}
Finally, for each kind of wall, the gcd of the curves in the adjacent combinatorial types are the same. The result follows.
\end{proof}

\begin{rem}
Notice that we do not need the degenerations to occur one at a time. As the picture is purely local, they can happen at the same time. The only restriction is that a chord or a loop cannot approach a marked point from both sides at the same time: the edges containing it would contract to a point, and as the path $\P_s$ is chosen generically, it stays outside some fixed codimension $2$ sets, including the ones where the marked point sits at the place where the edge is contracted.
\end{rem}

\begin{rem}
Notice that going through the degenerations, we get that the gcd of the weights of the unbounded ends is preserved. This comes from the fact that in the moduli space of genus $g$ curves, the only way to change the gcd of the curves, is to pass through the codimension $1$ strata where a cycle is contracted to a point. And this is impossible by the genericity assumption on $\P_s$.

However, curves with different gcd can happen. For instance, consider the wall $(b)$ on Figure \ref{figure walls point deformation}, and all the edges of the curve be of weight divisible by $2$. Then, to produce a curve with a different gcd, deform the pair of parallel edges so that the two resulting edges have an odd weight. This is not considered in the proof because the slope of the edges stays the same along the deformation.
\end{rem}

\subsection{Independence of the choice of the surface}
\label{section invariance surface}

We now consider the dependence of the refined invariants when changing the lattice inclusion $j:\Lambda\hookrightarrow N_\RR$, \textit{i.e.} changing the matrix $S$ so that the relation $C S^T\in\S_2(\RR)$ for a fixed $C$ keeps being satisfied.

\begin{proof}[Proof of Theorem \ref{theorem torus invariant}]
Let $C\in \Lambda\otimes N$ be a class. Up to a change of basis of both $N$ and $\Lambda$, we can assume that $C$ is of the form $\begin{pmatrix}
d & 0 \\
0 & dn \\
\end{pmatrix}=d C_0$. Thus the condition on $S=\begin{pmatrix}
s_{11} & s_{12} \\
s_{21} & s_{22} \\ 
\end{pmatrix}$ is that $ns_{12}=s_{21}$. We choose a generic path $S_t=(s_{ij}(t))_{ij}$ between two generic choices of matrices $S_0$ and $S_1$ such that this relation is satisfied along the path. Here, a matrice $S$ is said to be generic if $\{ C'\in \Lambda\otimes N \text{ such that }C'S^T\in\S_2(\RR)\}=\ZZ C_0$. In other words, $C$ and its multiples are the only classes realizable. The path $S_t$ might have to cross the set of non-generic matrices to go from $S_0$ to $S_1$. Let $\Lambda_t=\mathrm{Im}S_t$ and $\TT A_t=N_\RR/\Lambda_t$.

\medskip

In addition to the path of matrices, for each $t$ we can choose a configuration of points $\P_t$ such that $\P_t$ varies continuously. Moreover, we can assume that for each $t$, the configuration $\P_t$ is generic as a point configuration inside $\TT A_t$ so that we know that the curves passing through $\P_t$ are simple. Notice that the choice $\P_t$ is generic inside $\TT A_t$ even when the matrix $S_t$ is not generic. Such a path of configurations exists because for every matrix $S$, the set of generic configurations is a dense open subset.

\medskip

Using the cutting process, curves inside $\TT A_t$ are obtained from curves inside $N_\RR$ satisfying some gluing condition. If we impose furthermore to pass through $\P_t$, the curves in $\TT A_t$ passing through $\P_t$ are in bijection with the curves inside $N_\RR$ passing through some fixed points (the lift of $\P_t$) and whose ends satisfy some moment conditions (for the gluing). When moving $t$, the solutions move. To prove the invariance, we only need to show that for each value $t_*$, on a neighborhood of $t_*$, the solutions to the enumerative problem deform and give refined counts that are equal.

\begin{rem}
The difference with usual proofs of tropical invariance, such as the proof of Theorem \ref{theorem point invariance}, is that here we avoid almost any kind of wall due to the genericity of the point configuration $\P_t$ inside $\TT A_t$. In fact, we are proving that the refined invariants $BG_{g,C,k}(\TT A)$ are locally constant on the moduli space of tropical torus with curves in the class $C$. As it is connected, it is therefore constant.
\end{rem}

The subtlety is that for a non-generic matrix $S_{t_*}$, there are several decompositions $C=C_1+C_2$, and the reducible curves on $\TT A_{t_*}$ can be deformed to irreducible curves in the class $C$ for nearby matrices $S_t$, $t\in]t_*-\varepsilon;t_*+\varepsilon[$. Let $h:\Gamma\to\TT A_{t_*}$ be a genus $g$ parametrized tropical curve in the class $C$ passing through the point configuration $\P_{t_*}$. As $\P_{t_*}$ is generic, and we do not assume irreducibility but only the genus, the irreducible components of $\Gamma$ are simple, and their genera add up to $g$.

\begin{itemize}[label=$\bullet$]
\item If $\Gamma$ is irreducible. Use the cutting process to lift it to a curve inside $N_\RR$ satisfying point constraints and moments constraints so that the gluing is possible. Then, as the curve is regular, a small deformation of the constraints yields a small deformation of the curve, and the multiplicity stays the same. In particular, a deformation of the gluing condition yields a small deformation of the curve and it provides solutions in the same combinatorial type for all $t\in]t_*-\varepsilon;t_*+\varepsilon[$.

\item If $\Gamma$ is reducible but with irreducible components that realize some class proportional to $C$, we reduce to the previous case by deforming each the corresponding components. Thus, it is not possible to deform them to an irreducible simple tropical curves: assume that the curve has two components. A deformation into an irreducible curve would result from the smoothing of a unique intersection point between the irreducible components: at least one to get a connected curve, and at most one to keep the right genus. Then, either the edge resulting from the smoothing is disconnecting and thus has slope $0$, or it is not disconnecting but has length $0$ due to the moment condition. See next point for more details.

\item Assume that $\Gamma=\Gamma_1\sqcup\cdots\sqcup\Gamma_r$ is reducible with $r$ connected components, not all of them realizing a class proportional to $C$. In particular, the matrix $S_{t_*}$ is not generic. We assume that none of the classes is proportional to $C$, since the previous points allow one to deform separately these components. Then, choose a minimal subset $\Q$ among the intersection points between the components $\Gamma_i$, and such that the graph obtained from $\Gamma$ adding edges corresponding to the intersection points has genus $g$, and the new components realize classes proportional to $C$. We obtain new parametrized tropical curves that have some contracted edges. The edges corresponding to the intersection points in $\Q$ are contracted since they are disconnecting. When deforming the torus and the curve along with it, each node gets resolved in one of the two possible ways, with an edge that is not contracted anymore.

\begin{expl}\label{example smoothing reducible 1}
Consider the class $C=\begin{pmatrix}
2 & 0 \\
0 & 3 \\
\end{pmatrix}$ inside the torus for $S=\begin{pmatrix}
8 & 0 \\
0 & 6 \\
\end{pmatrix}$. The corresponding tropical torus contains curves in the classes $\begin{pmatrix}
1 & 0 \\
0 & 0 \\
\end{pmatrix}$ and $\begin{pmatrix}
0 & 0 \\
0 & 1 \\
\end{pmatrix}$, which are not realizable for a small deformation of the torus. Consider the curve depicted on Figure \ref{figure smoothing reducible curve} $(b)$, which is comprised of $6$ irreducible components, all of genus $1$. The set $\Q$ is depicted with a $\times$. Then, deforming the matrix $S$, it is possible to get a curve of genus $6$ by smoothing the points of $\Q$, and get the curve on Figure \ref{figure smoothing reducible curve} $(c)$.
\end{expl}

\begin{figure}
\begin{center}
\begin{tabular}{ccc}
\begin{tikzpicture}[line cap=round,line join=round,>=triangle 45,x=0.8cm,y=0.8cm]
\tikzstyle{vertex}=[circle,draw]
\node[vertex] (E) at (0,2) {$\Gamma_5$};
\node[vertex] (A) at (1,0) {$\Gamma_1$};
\node[vertex] (C) at (2,2) {$\Gamma_3$};
\node[vertex] (B) at (3,0) {$\Gamma_2$};
\node[vertex] (D) at (4,2) {$\Gamma_4$};

\draw (A)--(C);
\draw (B)--(D);
\draw (C)--(B);
\draw (A)--(E);
\end{tikzpicture}
&
\begin{tikzpicture}[line cap=round,line join=round,>=triangle 45,x=0.5cm,y=0.5cm]
\clip(-1.2,-1) rectangle (8.5,7.5);

\draw [line width=1pt] (0,0)-- ++(8,0)-- ++(0,6)-- ++(-8,0)-- ++(0,-6);

\draw [line width=1.5pt] (2,0)-- ++(0,6);
\draw [line width=1.5pt] (4,0)-- ++(0,6);
\draw [line width=1.5pt] (6,0)-- ++(0,6);

\draw [line width=1.5pt] (0,2)-- ++(8,0);
\draw [line width=1.5pt] (0,4)-- ++(8,0);

\draw (2,2) node {$\times$};
\draw (2,4) node {$\times$};
\draw (4,4) node {$\times$};
\draw (6,2) node {$\times$};

\draw (0,2) node[left] {$\Gamma_1$};
\draw (0,4) node[left] {$\Gamma_2$};
\draw (2,6) node[above] {$\Gamma_3$};
\draw (4,6) node[above] {$\Gamma_4$};
\draw (6,6) node[above] {$\Gamma_5$};
\end{tikzpicture}
&
\begin{tikzpicture}[line cap=round,line join=round,>=triangle 45,x=0.5cm,y=0.5cm]
\clip(-0.5,-0.5) rectangle (9.5,7.5);

\draw [line width=1pt] (0,0)-- ++(8,0.75)-- ++(0.5,6)-- ++(-8,-0.75)-- ++(-0.5,-6);

\draw [line width=1.5pt] (2,0.1625)-- (2,2) ++(0.25,0.25)-- ++(0,1.75) ++(0.25,0.25)-- (2.5,6.1625);
\draw [line width=1.5pt] (4,0.375)-- (4,4.25) ++(0.5,0.5)-- (4.5,6.325);
\draw [line width=1.5pt] (6,0.4925)-- (6,2.25) ++(0.5,0.5)-- (6.5,6.4875);

\draw [line width=1.5pt] (0.166666,2)-- (2,2)-- ++(0.25,0.25)-- ++(3.75,0)-- ++(0.5,0.5)-- ++(1.666666,0);
\draw [line width=1.5pt] (0.3333333,4)-- (2.25,4)-- ++(0.25,0.25)-- ++(1.5,0)-- ++(0.5,0.5)-- (8.3333333,4.75);

\end{tikzpicture} \\
$(a)$ & $(b)$ & $(c)$ \\
\end{tabular}
\caption{\label{figure smoothing reducible curve} On $(b)$ a reducible curve with components in classes that do not survive the torus deformation, and on $(c)$ the corresponding deformation for a specific set of intersection points between the irreducible components, giving the graph from $(a)$.}
\end{center}
\end{figure}
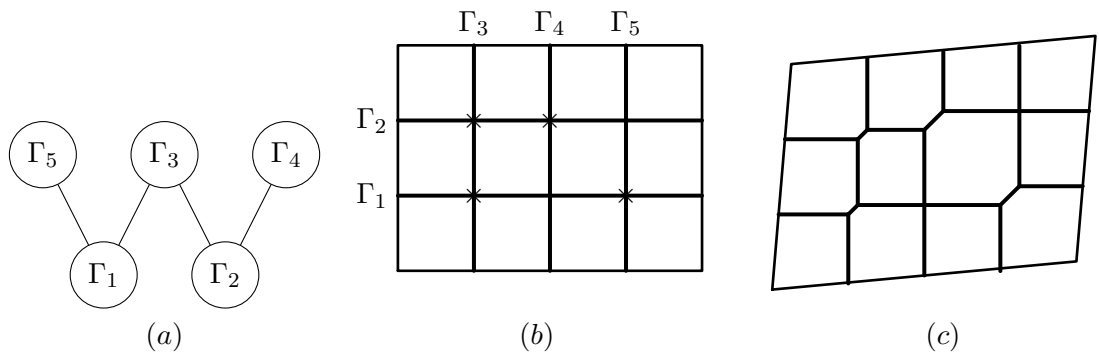

Consider the graph $\T$ consisting of the components $\Gamma_j$, where vertices are linked by an edge if the corresponding components share a point $q\in\Q$. This graph is a tree, otherwise the new graph would not have the right genus. 

\begin{expl}
The graph $\T$ corresponding to the example \ref{example smoothing reducible 1} is drawn on Figure \ref{figure smoothing reducible curve}. On  Figure \ref{figure smoothing reducible curve 2} is drawn a second example with the same base curve but a different set of node $\Q$.
\end{expl}

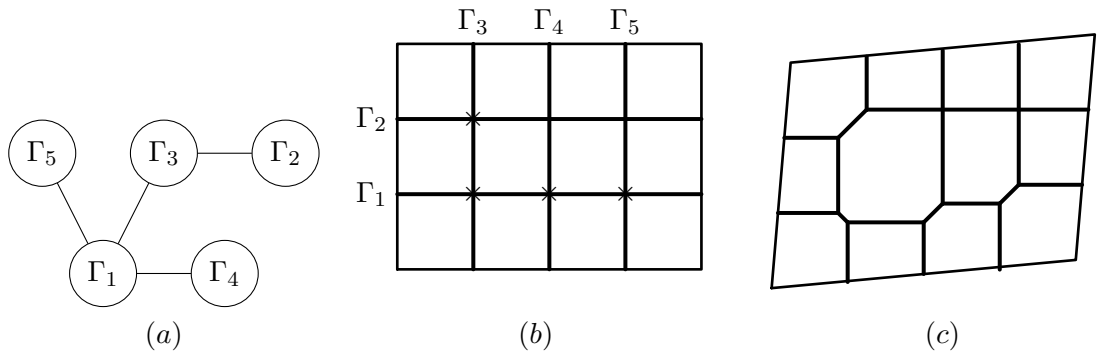
\begin{figure}[h]
\begin{center}
\begin{tabular}{ccc}
\begin{tikzpicture}[line cap=round,line join=round,>=triangle 45,x=0.8cm,y=0.8cm]
\tikzstyle{vertex}=[circle,draw]
\node[vertex] (E) at (0,2) {$\Gamma_5$};
\node[vertex] (A) at (1,0) {$\Gamma_1$};
\node[vertex] (C) at (2,2) {$\Gamma_3$};
\node[vertex] (B) at (3,0) {$\Gamma_4$};
\node[vertex] (D) at (4,2) {$\Gamma_2$};

\draw (A)--(C);
\draw (C)--(D);
\draw (A)--(B);
\draw (A)--(E);
\end{tikzpicture}
&
\begin{tikzpicture}[line cap=round,line join=round,>=triangle 45,x=0.5cm,y=0.5cm]
\clip(-1.2,-1) rectangle (8.5,7.5);

\draw [line width=1pt] (0,0)-- ++(8,0)-- ++(0,6)-- ++(-8,0)-- ++(0,-6);

\draw [line width=1.5pt] (2,0)-- ++(0,6);
\draw [line width=1.5pt] (4,0)-- ++(0,6);
\draw [line width=1.5pt] (6,0)-- ++(0,6);

\draw [line width=1.5pt] (0,2)-- ++(8,0);
\draw [line width=1.5pt] (0,4)-- ++(8,0);

\draw (2,2) node {$\times$};
\draw (2,4) node {$\times$};
\draw (4,2) node {$\times$};
\draw (6,2) node {$\times$};

\draw (0,2) node[left] {$\Gamma_1$};
\draw (0,4) node[left] {$\Gamma_2$};
\draw (2,6) node[above] {$\Gamma_3$};
\draw (4,6) node[above] {$\Gamma_4$};
\draw (6,6) node[above] {$\Gamma_5$};
\end{tikzpicture}
&
\begin{tikzpicture}[line cap=round,line join=round,>=triangle 45,x=0.5cm,y=0.5cm]
\clip(-0.5,-0.5) rectangle (9.5,7.5);

\draw [line width=1pt] (0,0)-- ++(8,0.75)-- ++(0.5,6)-- ++(-8,-0.75)-- ++(-0.5,-6);

\draw [line width=1.5pt] (2,0.1625)-- (2,1.75) ++(-0.25,0.25)-- (1.75,4) ++(0.75,0.75)-- (2.5,6.1625);
\draw [line width=1.5pt] (4,0.375)-- (4,1.75) ++(0.5,0.5)-- (4.5,6.325);
\draw [line width=1.5pt] (6,0.4925)-- (6,2.25) ++(0.5,0.5)-- (6.5,6.4875);

\draw [line width=1.5pt] (0.166666,2)-- (1.75,2)-- ++(0.25,-0.25)-- (4,1.75)-- ++(0.5,0.5)-- (6,2.25)-- ++(0.5,0.5)-- ++(1.666666,0);
\draw [line width=1.5pt] (0.3333333,4)-- (1.75,4)-- ++(0.75,0.75)-- (8.3333333,4.75);

\end{tikzpicture} \\
$(a)$ & $(b)$ & $(c)$ \\
\end{tabular}
\caption{\label{figure smoothing reducible curve 2} On $(b)$ a reducible curve with components in classes that do not survive the torus deformation, and on $(c)$ the corresponding deformation for a specific set of intersection points between the irreducible components, giving the graph from $(a)$.}
\end{center}
\end{figure}

Let $\Gamma_i$ be a leaf of $\T$ and apply the cutting process to it, for a cutting set that does not contain any point on the edge containing the intersection point $q$. The lifted curve inside $N_\RR$ is subject to some point constraints, and the gluing conditions from $\TT A_{t_*}$. Those are no longer satisfied when moving $t$ because the condition $C_i S_t^T$ is no longer satisfied. If $C_i S_t^T$ was symmetric, the gluing would be satisfied and the node would be deformable. Disconnect the edge containing $q$ and replace it by two unbounded ends. The new curve has genus one less, and now has the right number of constraints if we do not impose the two new unbounded ends to meet. The deformation forces the two ends to separate, and thus the appearance of a new bounded edge, and the combinatorial type is prescribed by the deformation of $\TT A_t$.

Proceeding inductively, we lift the curves corresponding to the vertices of $\T$ and prune the tree. When removing the vertex corresponding to a component, we cut open the curve at the intersection point and replace the bounded edge containing the intersection point by two unbounded ends, whose difference of moments is fixed by the leaf.

\begin{expl}
In the example of Figure \ref{figure smoothing reducible curve 2}, it means that when deformaing the torus, we first deform the components $\Gamma_2$, $\Gamma_4$ and $\Gamma_5$. Knowing the deformation of $\Gamma_2$, we deduce the deformation of $\Gamma_3$, and finally the deformation of $\Gamma_1$.
\end{expl}

For each choice of set $\Q$, among the $2^{|\Q|}$ adjacent combinatorial types resulting from the smoothing of the nodes of $\Q$, we only have two adjacent combinatorial types that are obtained by deformation of the torus. Clearly, the two deformations yield solutions for $\TT A_{t_\ast-\varepsilon}$ and $\TT A_{t_\ast-\varepsilon}$ respectively, and they have equal multiplicities since each intersection point becomes a pair of trivalent vertices with equal multiplicities on either side of the wall.
\end{itemize}
As the reducibility can be read on the graph, the result follows: if the tree $\T$ is not connected, it only yields reducible curves in $\TT A_t$ for nearby $t$. Otherwise, we get irreducible solutions.
\end{proof}

\begin{rem}
If  the tropical torus $\TT A$ is non-generic, there is still a relation between the invariants $BG_{g,C}(\TT A)$ and $BG_{g,C}$, which is the invariant for a generic choice of $\TT A$. The difference between both invariants comes from the appearance of reducible curves with classes not proportional to $C$: the value of $BG_{g,C}(\TT A)$ depends on the lattice $\{ C':C'S^T\in\S_2(\RR)\}$.
\end{rem}

\begin{rem}
The refined multiplicity of the tropical curves resulting from the smoothing of a reducible curve depends on the intersection index between the curves. With the refined multiplicity, having two intersection points of index $1$ is not the same as having a point of intersection index $2$, since the first yield a contribution $1$ to the refined multiplicity, while the second yield a $q+2+q^{-1}$. Surprisingly, the invariance statement tells us that the total count over all smoothings of all the reducible curve is invariant.
\end{rem}

\section{Examples}

Computation of the invariants requires to solve the tropical enumerative problem. Thus, we postpone the sophisticated examples to the third paper which provides an algorithm for the computation of the invariants. For sake of completeness, we include two examples to illustrate.

\begin{figure}[h]
\begin{center}
\begin{tikzpicture}[line cap=round,line join=round,>=triangle 45,x=0.5cm,y=0.5cm]
\clip(-0.5,-0.5) rectangle (21,5);

\draw [line width=1pt] (0,0)-- ++(19.8,0) --++ (0.2,1.2)-- ++(0,2.8)-- ++(-19.8,0) --++ (-0.2,-1.2)-- ++(0,-2.8);

\draw [line width=1.5pt] (0,1.5)-- (4,1.5)-- ++(0.2,0.2)-- (9,1.7)-- ++(0.6,0.6)-- (16,2.3)-- ++(0.2,0.4)-- (20,2.7);

\draw [line width=1.5pt] (4,0)-- (4,1.5) ++(0.2,0.2)-- (4.2,4);
\draw [line width=1.5pt] (9,0)-- (9,1.7) ++(0.6,0.6)-- (9.6,4);
\draw [line width=1.5pt] (9.2,0)-- ++(0,4);
\draw [line width=1.5pt] (9.4,0)-- ++(0,4);
\draw [line width=2pt] (16,0)-- (16,2.3) ++(0.2,0.4)-- (16.2,4);

\draw (16.2,3.5) node[right] {$2$};

\end{tikzpicture}

\caption{\label{figure stretched curve} A curve of genus $4$ in a long hexagon. It has one horizontal loop, and three vertical loops. Two of them make only one vertical round while the middle one makes three.}
\end{center}
\end{figure}
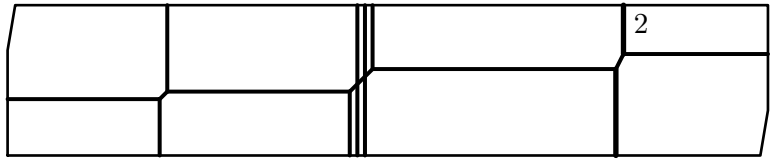

\begin{expl}
Assume that the class is $\begin{pmatrix}
1 & 0 \\
0 & n \\
\end{pmatrix}$ and that we are looking for curves of genus $g$ passing through $g$ points. Assuming that the abelian surface is represented by a fundamental domain that is a long hexagon and that the points constraints are horizontally spread, all the curves are of the following form: a loop that goes around the horizontal direction, and $g-1$ loops that go around the vertical direction. Each of the vertical loops is marked by a point, and one of the horizontal edges linking the vertical loops has a marked point. See Figure \ref{figure stretched curve} for an example of such a curve.

To choose such a curve, we need to choose the marked point that belongs to an horizontal edge and dispatch the degree $n$ among the $g-1$ vertical loops. For a choice of dispatching $a_1+\cdots +a_{g-1}=n$, we have several possibilities. Choosing $k_i|a_i$ such that the loop makes $k_i$ rounds around the vertical direction,  the $i$-th marked point has $k_i$ possible positions, and the loop has two vertices of multiplicity $\frac{a_i}{k_i}$. Thus, we get
\begin{align*}
N_{g,(1,n)} = & g \sum_{a_1+\cdots+a_{g-1}=n} \sum_{k_i|a_i}k_i\left(\frac{a_i}{k_i}\right)^2 \\
 = & g \sum_{a_1+\cdots+a_{g-1}=n} \prod_1^{g-1}a_i\sigma_1(a_i). \\
\end{align*}
We recover the result from \cite{bryan1999generating} giving the generating series of these numbers:
$$\sum_{n=1}^\infty N_{g,(1,n)}y^n = g DG_2(y)^{g-1},$$
where $G_2(y)$ is the Eisenstein series, and $D=y\frac{\dd}{\dd y}$. For the refined invariants, we get
\begin{align*}
BG_{g,(1,n)} = & g\sum_{a_1+\cdots+a_{g-1}=n}\sum_{k_i|a_i}k_i\left[\frac{a_i}{k_i}\right]^2. \\
\end{align*}
\end{expl}

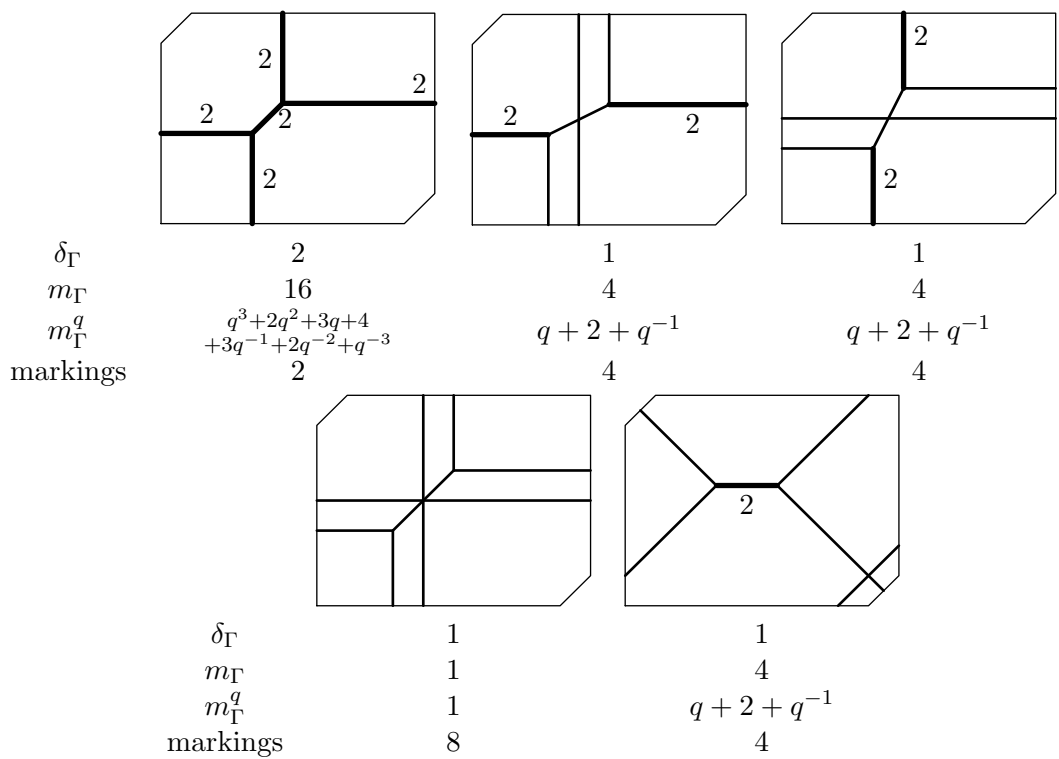
\begin{figure}[h]
\begin{center}
\begin{tabular}{cccc}
 & \begin{tikzpicture}[line cap=round,line join=round,>=triangle 45,x=0.4cm,y=0.4cm]

\draw [line width=0.5pt] (0,0)-- ++(8,0) --++ (1,1)-- ++(0,6)-- ++(-8,0) --++ (-1,-1)-- ++(0,-6);

\draw [line width=2pt] (0,3)-- (3,3)-- ++(1,1)-- (9,4);
\draw [line width=2pt] (3,0)-- (3,3);
\draw [line width=2pt] (4,7)-- (4,4);
\draw (3,1.5) node[right] {$2$};
\draw (1.5,3) node[above] {$2$};
\draw (8.5,4) node[above] {$2$};
\draw (4,5.5) node[left] {$2$};
\draw (3.5,3.5) node[below,right] {$2$};
\end{tikzpicture} &
\begin{tikzpicture}[line cap=round,line join=round,>=triangle 45,x=0.4cm,y=0.4cm]

\draw [line width=0.5pt] (0,0)-- ++(8,0) --++ (1,1)-- ++(0,6)-- ++(-8,0) --++ (-1,-1)-- ++(0,-6);

\draw [line width=2pt] (0,3)-- (2.5,3);
\draw [line width=1pt] (2.5,3)-- ++(2,1);
\draw [line width=2pt] (4.5,4)-- (9,4);
\draw [line width=1pt] (2.5,0)-- (2.5,3);
\draw [line width=1pt] (4.5,7)-- (4.5,4);
\draw [line width=1pt] (3.5,0)-- (3.5,7);
\draw (1.25,3) node[above] {$2$};
\draw (7.25,4) node[below] {$2$};
\end{tikzpicture} &
\begin{tikzpicture}[line cap=round,line join=round,>=triangle 45,x=0.4cm,y=0.4cm]

\draw [line width=0.5pt] (0,0)-- ++(8,0) --++ (1,1)-- ++(0,6)-- ++(-8,0) --++ (-1,-1)-- ++(0,-6);

\draw [line width=1pt] (0,2.5)-- (3,2.5)-- ++(1,2)-- (9,4.5);
\draw [line width=2pt] (3,0)-- (3,2.5);
\draw [line width=2pt] (4,7)-- (4,4.5);
\draw [line width=1pt] (0,3.5)-- (9,3.5);
\draw (3,1.5) node[right] {$2$};
\draw (4,6.25) node[right] {$2$};
\end{tikzpicture} \\
$\delta_\Gamma$ & $2$ & $1$ & $1$ \\
$m_\Gamma$ & $16$ & $4$ & $4$ \\
$m^q_\Gamma$ & $\substack{q^3+2q^2+3q+4\\+3q^{-1}+2q^{-2}+q^{-3}}$ & $q+2+q^{-1}$ & $q+2+q^{-1}$ \\
markings & $2$ & $4$ & $4$ \\
\end{tabular}
\begin{tabular}{ccc}
 & \begin{tikzpicture}[line cap=round,line join=round,>=triangle 45,x=0.4cm,y=0.4cm]

\draw [line width=0.5pt] (0,0)-- ++(8,0) --++ (1,1)-- ++(0,6)-- ++(-8,0) --++ (-1,-1)-- ++(0,-6);

\draw [line width=1pt] (0,2.5)-- (2.5,2.5)-- ++(2,2)-- (9,4.5);
\draw [line width=1pt] (2.5,0)-- (2.5,2.5);
\draw [line width=1pt] (4.5,7)-- (4.5,4.5);
\draw [line width=1pt] (0,3.5)-- (9,3.5);
\draw [line width=1pt] (3.5,0)-- (3.5,7);
\end{tikzpicture} &
\begin{tikzpicture}[line cap=round,line join=round,>=triangle 45,x=0.4cm,y=0.4cm]

\draw [line width=0.5pt] (0,0)-- ++(8,0) --++ (1,1)-- ++(0,6)-- ++(-8,0) --++ (-1,-1)-- ++(0,-6);

\draw [line width=1pt] (0,1)-- (3,4)-- (0.5,6.5);
\draw [line width=1pt] (7,0)--++ (2,2);
\draw [line width=1pt] (8.5,0.5)-- (5,4) --++(3,3);
\draw [line width=2pt] (3,4)-- (5,4);
\draw (4,4) node[below] {$2$};
\end{tikzpicture} \\
$\delta_\Gamma$ & $1$ & $1$ \\
$m_\Gamma$ & $1$ & $4$ \\
$m^q_\Gamma$ & $1$ & $q+2+q^{-1}$ \\
markings & $8$ & $4$ \\
\end{tabular}

\caption{\label{figure genus 2 deg 2 2}Genus $2$ curves and their multiplicities.}
\end{center}
\end{figure}

\begin{expl}
We compute the invariants for genus $2$ curves in the class $\begin{pmatrix}
2 & 0 \\
0 & 2 \\
\end{pmatrix}$. The combinatorial types of the curves are represented on Figure \ref{figure genus 2 deg 2 2}. For each of them, we compute the number of them that passes through $2$ fixed marked points. Thus, we recover
$$N_{2,(2,2)}=64+16+16+8+16=120,$$
and
$$BG_{2,(2,2)}=2q^3+4q^2+18q+40+18q^{-1}+4q^{-2}+2q^{-3}.$$
\end{expl}

\bibliographystyle{plain}
\bibliography{biblio}

\begin{thebibliography}{10}

\bibitem{blechman2019refined}
Lev Blechman and Eugenii Shustin.
\newblock Refined descendant invariants of toric surfaces.
\newblock {\em Discrete \& Computational Geometry}, 62(1):180--208, 2019.

\bibitem{block2016fock}
Florian Block and Lothar G{\"o}ttsche.
\newblock Fock spaces and refined severi degrees.
\newblock {\em International Mathematics Research Notices},
  2016(21):6553--6580, 2016.

\bibitem{block2016refined}
Florian Block and Lothar G{\"o}ttsche.
\newblock Refined curve counting with tropical geometry.
\newblock {\em Compositio Mathematica}, 152(1):115--151, 2016.

\bibitem{blomme2021floor}
Thomas Blomme.
\newblock Floor diagrams and enumerative invariants of line bundles over an
  elliptic curve.
\newblock {\em arXiv preprint arXiv:2112.05439}, 2021.

\bibitem{blomme2021refinedtrop}
Thomas Blomme.
\newblock Refined count for rational tropical curves in arbitrary dimension.
\newblock {\em Mathematische Annalen}, pages 1--46, 2021.

\bibitem{blomme2021refinedreal}
Thomas Blomme.
\newblock Refined count of real oriented rational curves.
\newblock {\em arXiv preprint arXiv:2107.07286}, 2021.

\bibitem{bousseau2019tropical}
Pierrick Bousseau.
\newblock Tropical refined curve counting from higher genera and lambda
  classes.
\newblock {\em Inventiones mathematicae}, 215(1):1--79, 2019.

\bibitem{bryan1999generating}
Jim Bryan and Naichung~Conan Leung.
\newblock Generating functions for the number of curves on abelian surfaces.
\newblock {\em Duke mathematical journal}, 99(2):311--328, 1999.

\bibitem{bryan2018curve}
Jim Bryan, Georg Oberdieck, Rahul Pandharipande, and Qizheng Yin.
\newblock Curve counting on abelian surfaces and threefolds.
\newblock {\em Algebraic Geometry}, 5(4):398--463, 2018.

\bibitem{gottsche2019refined}
Lothar G{\"o}ttsche and Franziska Schroeter.
\newblock Refined broccoli invariants.
\newblock {\em Journal of Algebraic Geometry}, 28(1):1--41, 2019.

\bibitem{gottsche2014refined}
Lothar G{\"o}ttsche and Vivek Shende.
\newblock Refined curve counting on complex surfaces.
\newblock {\em Geometry \& Topology}, 18(4):2245--2307, 2014.

\bibitem{griffiths2014principles}
Phillip Griffiths and Joseph Harris.
\newblock {\em Principles of algebraic geometry}.
\newblock John Wiley \& Sons, 2014.

\bibitem{halle2017tropical}
Lars~Halvard Halle and Simon~CF Rose.
\newblock Tropical count of curves on abelian varieties.
\newblock {\em Communications in Number Theory and Physics}, 11(1):219--248,
  2017.

\bibitem{itenberg2019tropical}
Ilia Itenberg, Ludmil Katzarkov, Grigory Mikhalkin, and Ilia Zharkov.
\newblock Tropical homology.
\newblock {\em Mathematische Annalen}, 374(1):963--1006, 2019.

\bibitem{itenberg2013block}
Ilia Itenberg and Grigory Mikhalkin.
\newblock On {B}lock--{G}{\"o}ttsche multiplicities for planar tropical curves.
\newblock {\em International Mathematics Research Notices},
  2013(23):5289--5320, 2013.

\bibitem{mandel2020descendant}
Travis Mandel and Helge Ruddat.
\newblock Descendant log gromov-witten invariants for toric varieties and
  tropical curves.
\newblock {\em Transactions of the American Mathematical Society},
  373(2):1109--1152, 2020.

\bibitem{mikhalkin2005enumerative}
Grigory Mikhalkin.
\newblock Enumerative tropical algebraic geometry in {R}$^2$.
\newblock {\em Journal of the American Mathematical Society}, 18(2):313--377,
  2005.

\bibitem{mikhalkin2017quantum}
Grigory Mikhalkin.
\newblock Quantum indices and refined enumeration of real plane curves.
\newblock {\em Acta Mathematica}, 219(1):135--180, 2017.

\bibitem{nishinou2020realization}
Takeo Nishinou.
\newblock Realization of tropical curves in abelian surfaces.
\newblock {\em arXiv preprint arXiv:2007.16148}, 2020.

\bibitem{nishinou2006toric}
Takeo Nishinou and Bernd Siebert.
\newblock Toric degenerations of toric varieties and tropical curves.
\newblock {\em Duke Mathematical Journal}, 135(1):1--51, 2006.

\bibitem{schroeter2018refined}
Franziska Schroeter and Eugenii Shustin.
\newblock Refined elliptic tropical enumerative invariants.
\newblock {\em Israel Journal of Mathematics}, 225(2):817--869, 2018.

\bibitem{shustin2018refined}
Eugenii Shustin.
\newblock On refined count of rational tropical curves.
\newblock {\em arXiv preprint arXiv:1812.08038}, 2018.

\bibitem{tyomkin2017enumeration}
Ilya Tyomkin.
\newblock Enumeration of rational curves with cross-ratio constraints.
\newblock {\em Advances in Mathematics}, 305:1356--1383, 2017.

\end{thebibliography}

\end{document}